\numberwithin{equation}{section}
\newtheorem{remark}{Remark}[section]
\newcommand\SG[1]{\textcolor{black}{#1}}
\definecolor{codegreen}{rgb}{0,0.6,0}
\definecolor{codegray}{rgb}{0.5,0.5,0.5}
\definecolor{codepurple}{rgb}{0.58,0,0.82}
\definecolor{backcolour}{rgb}{0.95,0.95,0.92}
\title{A Sampling-Based Adaptive Rank Approach to the Wigner-Poisson System}
\author{Andrew Christlieb 
\thanks{Department of Computational Mathematics, Science and Engineering, Michigan State University, East Lansing, MI, 48824 (\email{christli@msu.edu})}
\and 
Sining Gong \thanks{Corresponding author. Department of Computational Mathematics, Science and Engineering, Michigan State University, East Lansing, MI, 48824 (\email{gongsini@msu.edu})}
\and 
Jing-Mei Qiu \thanks{Department of Mathematical Sciences, University of Delaware, Newark, DE, 19716 (\email{jingqiu@udel.edu}, \email{nyzheng@udel.edu})} \and 
Nanyi Zheng \footnotemark[3]}
\begin{document}

\maketitle
\begin{abstract}
We develop a mass-conserving, adaptive-rank solver for the 1D1V Wigner–Poisson system. Our work is motivated by applications to the study of the stopping power of $\alpha$ particles at the National Ignition Facility (NIF). In this regime, electrons are in a warm dense state, requiring more than a standard kinetic model. They are hot enough to neglect Pauli exclusion, yet quantum enough to require accounting for uncertainty. The Wigner–Poisson system captures these effects but presents challenges due to its nonlocal nature. Based on a second-order Strang splitting method, we first design a full-rank solver with a structure-preserving Fourier update that ensures the intermediate solutions remain real-valued (up to machine precision), improving upon previous methods. Simulations demonstrate that the solutions exhibit a low rank structure for moderate to high dimensionless Planck constants ($H \ge 0.1$). This observed low rank structure motivates the development of an adaptive-rank solver, built on a Semi-Lagrangian adaptive-rank (SLAR) scheme for advection and an adaptive-rank, structure-preserving Fourier update for the Wigner integral terms, with a rigorous proof of structure-preserving property provided. Our solver achieves $O(N)$ complexity in both storage and computation time, while preserving mass and maintaining momentum accuracy up to the truncation error. The adaptive rank simulations are visually indistinguishable from the full-rank simulations in capturing solution structures. These results highlight the potential of adaptive rank methods for high-dimensional Wigner–Poisson simulations, paving the way toward fully kinetic studies of stopping power in warm dense plasmas.
\end{abstract}

\noindent
{ \footnotesize{\textbf{Keywords}: Low rank methods, Wigner-Poisson system, Adaptive rank solver, Stang splitting methods.} }

\section{Introduction}

Motivated by our goal to eventually study stopping power of $\alpha$ particles on the National Ignition Facility (NIF), we consider the 
development of an adaptive rank solver for the Wigner-Poisson system \cite{wigner1932quantum,bonitz2016quantum} in one spatial and one velocity dimension (1D1V).  $\alpha$ particles result from the fusion of two hydrogen atoms into an energetic helium particle known as an alpha particle.  It is critical to understand how this particle redistributes its energy to the electrons and other hydrogen atoms in the fusion capsule, as this can impact yield. A complicating factor is that the electrons are in a warm dense quantum state, which has been identified through a combination of experiment and theory \cite{zylstra2022burning,bonitz2016quantum,graziani2014kinetic}.  However, the electrons are hot enough that one can neglect the Pauli exclusion principle, and a mean field approximation to the collective behavior of the quantum electrons is well characterized by the Wigner-Poisson system \cite{graziani2014kinetic}.  A difficulty associated with the Wigner-Poisson system is that it is a nonlocal structure; traditional methods for parallel discretization become intractable due to the high communication cost one needs at each time step. 

 Mesh-based numerical methods for the Wigner–Poisson system have been studied since the mid-1980s. \SG{The methods can be categorized into two classes: monothetic methods and mixed methods, with the latter discussed in the following paragraph.} Frensley's work in \cite{frensley1987wigner} approximated the double integral in the Wigner term using truncated integral sums and employed a first-order upwind scheme for the advection term. Later, in 1989, Jensen and Buot improved upon this by adopting a second-order central difference scheme for advection to address the asymmetric distribution function along the velocity axis in non-biased resonant tunneling devices (RTDs) \cite{jensen1989numerical}. They further extended the scheme to biased RTDs using an upwind-based difference scheme, and enhanced the method by incorporating self-consistency through Poisson's equation and modeling collisions using a BGK collision operator \cite{jensen1991methodology}. Rather than using truncated integral sums, in 1990, Ringhofer introduced the difference-spectral method to handle complex boundary conditions \cite{ringhofer1990spectral}, and later extended his work to self-consistent potentials \cite{ringhofer1992spectral}.  More recently, Chen et al. \cite{chen2019numerical} addressed Wigner systems with unbounded potentials using pseudo-differential operators. In addition, Jiang et al. \cite{jiang2022hybrid} introduced a mixed discretization framework for the Wigner equation, and Sun et al. \cite{sun2024hybrid} developed a high-order summation-by-parts method to handle the double integral in the Wigner term. 
\SG{In 2016, Xiong et al. \cite{xiong2016advective} proposed a spectral method that inverts the advection operator via a resolvent expansion applied to the Wigner term, which is treated explicitly, resulting in a Courant–Friedrichs–Lewy (CFL) restriction.}

Due to the complexity of the Wigner term which involves a nonlocal and highly oscillatory pseudo-differential operator, a variety of splitting methods have been proposed to simplify the system \cite{suh1991numerical,arnold1995operator,arnold1996operator,furtmaier2016semi,dorda2015weno,chen2019high}. 
Suh et al.\cite{suh1991numerical} applied Cheng and Knorr’s scheme for Vlasov–Poisson \cite{cheng1976integration} to the Wigner–Poisson system in 1991.   They were the first to both apply splitting as well as use the model to study quantum plasmas, since the Vlasov–Poisson equation arises as the classical limit of the Wigner–Poisson system as quantum effects become negligible.
Arnold and Ringhofer proposed first- and second-order splitting methods with stability analysis in \cite{arnold1995operator,arnold1996operator}.  The analysis in \cite{arnold1996operator} covered the case of Suh's method, which they show is unconditionally stable.  Later, Chen et al. introduced a high-order splitting method in a 2022 paper \cite{chen2022higher}. These methods decompose the system into subproblems, allowing the application of different appropriate schemes to each component. In particular, techniques from computational fluid dynamics can be used for the advection equation without the Wigner term, while the Wigner term can be handled separately. For example,
in \cite{suh1991numerical}, the method makes use of a second order Semi-Lagrangian (SL) for the advection term  and uses an analytical spectral update on the Wigner term to avoid having a CFL restriction. In \cite{suh1991numerical}, avoiding the CFL comes at the cost of conservation.  In \cite{dorda2015weno}, the advection term is treated with a fifth-order finite volume Weighted Essentially Non-Oscillatory (WENO) scheme using explicit time stepping, and the Wigner term is treated with the finite element method. In \cite{chen2019high}, the WENO scheme is used for spatial advection, and the spectral method is used for the Wigner term. 
We note that while we focus on mesh-based methods, there has been substantial work on Monte Carlo methods for Wigner equations \cite{sellier2014benchmark,sellier2015introduction,nedjalkov2004unified,shao2015comparison}. These methods, based on particle models, are applied to single-particle or two-particle Wigner equations instead of the full Wigner-Poisson system and it  remains unclear whether they can be extended to kinetic descriptions. A signed-particle Monte Carlo method was proposed in \cite{nedjalkov2004unified} for the simulation of RTDs. Later, in 2014, the Wigner Monte Carlo method was extended to simulate a Gaussian wavepacket interacting with a potential barrier, and compared with results from \cite{shao2011adaptive}, which used the cell-average spectral element method to analyze numerical accuracy.

In this paper, we first improve the full-rank solver for the Wigner–Poisson system originally proposed in \cite{suh1991numerical} and analyzed in \cite{arnold1996operator}, which employs a mesh-based method combined with second-order Strang splitting. Specifically, we incorporate a structure-preserving modification to the Fourier update step, ensuring that the intermediate pre-Inverse Fast Fourier Transform (IFFT) solution's conjugate-symmetric structure property is preserved, which is not satisfied in the previous solver. For the physical space advection, we employ the semi-Lagrangian (SL) method with high-order WENO reconstruction from \cite{qiu2010conservative}.  By performing singular value decomposition (SVD) of the full-rank solution, we observe that the solutions of the Wigner–Poisson system are intrinsically low rank, especially when quantum effects are significant such as the case $H = 1,8$, where $H$ denotes the dimensionless Planck constant defined in \eqref{eqn:quantum parameter}. This observation motivates the development of an adaptive rank approach with operator splitting for the Wigner–Poisson system. The low rank aspect of this work is a natural extension of our recent Semi-Lagrangian Adaptive Rank (SLAR) method for the Vlasov–Poisson system \cite{zheng2025semi}. The SL formulation enables large time step sizes beyond the CFL restriction. Following the work of \cite{suh1991numerical, cheng1976integration}, we perform operator splitting: we use the SLAR method \cite{zheng2025semi} with the high order WENO reconstruction developed in \cite{qiu2010conservative} for the physical space; then we extend the adaptive rank approach to the Fourier space to address the complications arising from the nonlocal phase space operator. The adaptive rank approach to the Fourier space requires additional attention due to its conjugate-symmetric structure property which is preserved by applying an improved adaptive rank approach. A rigorous proof is provided for such a structure-preserving adaptive rank approach. By fully exploiting the low rank structure of the Wigner-Poisson system, the proposed SLAR–Fourier solver achieves $\mathcal{O}(N)$ complexity, where $N$ denotes the number of mesh points per dimension. Such complexity greatly reduces memory demands and has the potential to 
significantly speed up the full rank mesh-based 
solver.  We note that this work  paves the road to affordable high dimensional Wigner-Poisson simulations using a Tensor CUR extension to the algorithm presented here. To the best of our knowledge, this is the first paper with arbitrary CFL to solve the Wigner-Poisson system in an adaptive rank setting with operator splitting. 
In addition, we propose to conserve mass by applying a Lagrange multiplier scheme to the solution obtained by our adaptive rank Wigner-Poisson solver after each time step. 

We note that low rank evolution of time-dependent multi-dimensional partial differential equations (PDEs) got started from the pioneering work on dynamic low rank (DLR) methods for matrix differential equations in \cite{koch2007dynamical}. Since then, there have been  rapid research developments in the field, including the vast development of the dynamical low rank (DLR) approach  ~\cite{koch2007dynamical, lubich2014projector, einkemmer2018low, ceruti2022unconventional, dektor2021dynamic} and the step-and-truncate (SAT) approach ~\cite{kormann2015semi, dektor2021rank} with applications to various high dimensional PDEs. Among existing efforts, the low rank approach for the Vlasov-Poisson system is of particular relevance to this work. \cite{einkemmer2018low} developed the DLR and \cite{GuoVlasovFlowMap2022} proposed the SAT low rank approaches to the Vlasov-Poisson model; in \cite{einkemmer2021mass, guo2024local, guo2024conservative} conservative projections are proposed to exactly preserve the macroscopic observables  even with low rank truncation. 

Our paper is laid out in the following way: in Section 2, we introduce the non-dimensionalized Wigner-Poisson system and present numerical studies of the full rank solution to motivate the development of adaptive rank solvers; in Section 3, we describe our improved full rank solver based on operator splitting with the structure-preserving property and our proposed adaptive rank solver with the adaptation of the adaptive cross approximation to the solution matrix in physical and phase space; in Section 4, we demonstrate the efficacy of the proposed method and verify its low rank complexity; and finally, Section 5 concludes the paper.


\section{The Wigner–Poisson model and its solutions' rank structure}

This section begins by reviewing the Wigner–Poisson model and presenting its non-dimensional form. Motivated by conditions relevant to the National Ignition Facility (NIF), where the dimensionless Planck constant \(H\) (a scaled measure of quantum effects, defined in \Cref{subsec:Wigner poisson system}) is typically of order one, we then conduct a numerical study in one spatial and one velocity dimension (1D1V) setting using a full-rank solver in \Cref{subsec:rank analysis}. The goal is to examine the rank structure of Wigner–Poisson solutions in this moderate-\(H\) regime, based on the singular value decomposition (SVD) of full-rank results, and thereby assess the potential suitability of low-rank methods for approximating the nonlocal Wigner operator.

\subsection{Wigner–Poisson system}\label{subsec:Wigner poisson system}

The Wigner–Poisson system in 1D1V, as presented in~\cite[Chapter~4]{haas2011quantum}, takes the form:
\begin{subequations}\label{eqn:WPmodel}
\begin{align}
\frac{\partial \tilde{f}}{\partial \tilde{t}} + \tilde{v} \frac{\partial \tilde{f}}{\partial s} 
&= -\frac{iem_e}{2\pi \hbar^2} \iint d\tilde{v}'\, ds'\, \exp\left( i m_e \frac{(\tilde{v}' - \tilde{v}) s'}{\hbar} \right) \left[ \phi\left(s + \frac{s'}{2}\right) - \phi\left(s - \frac{s'}{2}\right) \right] \tilde{f}(s, \tilde{v}', \tilde{t}), \\
\frac{\partial^2 \phi}{\partial s^2} &= -\frac{e}{\epsilon_0} \left( \int d\tilde{v}\, \tilde{f} - n_0 \right),
\end{align}
\end{subequations}
where \(\tilde{f}(s, \tilde{v}, \tilde{t})\) is the Wigner distribution function in phase space, and \(\phi(s)\) denotes the electrostatic potential. The physical constants are: \(m_e\) the particle mass, \(e\) the elementary charge, \(\epsilon_0\) the vacuum permittivity, \(n_0\) the background number density, \(\hbar\) the reduced Planck constant, and \(\tilde{t}\) the physical time. 

We now nondimensionalize the Wigner–Poisson system to facilitate scaling analysis and numerical discretization. Let \(\tau\), \(l\), and \(\bar{\phi}\) denote the characteristic time, length, and electrostatic potential scales, respectively. The corresponding nondimensional variables are defined as:
\[
\tilde{t} = \tau t,\quad s = l x,\quad \phi = \bar{\phi} \Phi,\quad \tilde{v} = \frac{l}{\tau} v,\quad f = \frac{l}{n_0 \tau} \tilde{f}.
\]
Substituting these into the Wigner–Poisson system~\eqref{eqn:WPmodel} yields:
\begin{subequations}
\begin{align}
\frac{\partial f}{\partial t} + v \frac{\partial f}{\partial x} &= -\frac{iC}{2\pi H^2} \iint dv' dx' \, \exp\left(i \frac{v' - v}{H} x'\right) \left[\Phi\left(x + \frac{x'}{2}\right) - \Phi\left(x - \frac{x'}{2}\right)\right] f(x, v', t), \\
\frac{\partial^2 \Phi}{\partial x^2} &= -D \left(\int f \, dv - \rho_0 \right), \qquad \text{with } \rho_0 = 1. \label{subeq:non-dim poisson}
\end{align}
\end{subequations}
The resulting dimensionless parameters are given by:
\[
C = \frac{e \bar{\phi} \tau^2}{m_e l^2},\quad H = \frac{\tau \hbar}{m_e l^2},\quad D = \frac{e n_0 l^2}{\bar{\phi} \epsilon_0}.
\]

To further simplify the system and highlight key physical scalings, we choose the potential scale as
\[
\bar{\phi} = \frac{e n_0 l^2}{\epsilon_0},
\]
and define the time and length scales based on characteristic plasma parameters: time is scaled by the plasma frequency \(\omega_{pe}\), and space by the Debye length \(\lambda_D\), given respectively by
\[
\omega_{pe} = \sqrt{ \frac{e^2 n_0}{m_e \epsilon_0} }, \qquad \lambda_D = \sqrt{ \frac{ \epsilon_0 k_B T_0}{n_0 e^2} },
\]
where \(T_0\) denotes the reference temperature. 
Under these scalings, the velocity scale becomes the thermal velocity,
\[
\frac{l}{\tau} = \lambda_D \omega_{pe} 
= \sqrt{ \frac{ \epsilon_0 k_B T_0}{n_0 e^2} } \cdot \sqrt{ \frac{e^2 n_0}{m_e \epsilon_0} } 
= \sqrt{ \frac{k_B T_0}{m_e} } = v_{th},
\]
which is a natural scale in kinetic descriptions of plasmas. As a result, the dimensionless parameters simplify significantly:
\begin{equation}\label{eqn:quantum parameter}
C = \frac{e \bar{\phi} }{m_e l^2 \omega_{pe}^2} 
= \frac{e}{m_e l^2} \cdot \frac{e n_0 l^2}{\epsilon_0} \cdot \frac{m_e \epsilon_0}{e^2 n_0} = 1, \quad
H = \frac{ \hbar}{m_e \lambda_D^2 \omega_{pe}}, \quad
D = \frac{e n_0 l^2}{\bar{\phi} \epsilon_0} = 1.
\end{equation}
Here, \(H\) emerges as the sole remaining parameter, quantifying the strength of quantum effects in the dimensionless formulation.

Thus, the non-dimensional Wigner–Poisson system takes the form:
\begin{subequations}\label{eqn:nonD-WP-system}
\begin{align}
\frac{\partial f}{\partial t} + v \frac{\partial f}{\partial x} &= -\frac{i}{2 \pi H^2} \iint dv' dx' \, \exp\left( i \frac{v' - v}{H} x' \right) \left[ \Phi\left(x + \frac{x'}{2}\right) - \Phi\left(x - \frac{x'}{2}\right)\right] f(x,v', t), \label{eq:wigner} \\
- \frac{\partial^2 \Phi }{\partial x^2} &= \int f \, dv - 1. \label{eq:poisson}
\end{align}
\end{subequations}
Note that the dimensionless quantum parameter \(H\), defined in~\eqref{eqn:quantum parameter}, remains a free input that must be specified numerically. Unlike the Vlasov–Poisson system, the Wigner–Poisson model requires both the Debye length and plasma frequency to be prescribed in order to determine the appropriate value of \(H\). As \(H \to 0\), the system~\eqref{eqn:nonD-WP-system} formally converges to the classical Vlasov–Poisson limit; see Appendix~\ref{appendixA} for a detailed derivation.

\subsection{Low Rank Structure via the Lens of Full Rank Solutions}\label{subsec:rank analysis}

Motivated by preliminary numerical observations, we hypothesize that solutions to the Wigner–Poisson system tend to exhibit low-rank structure at late times when the dimensionless Planck constant \(H\) is moderately large (e.g., \(H \geq 0.1\)). In this regime, quantum effects become significant, but the solution complexity appears to remain compressible in the phase space. This behavior stands in contrast to the classical Vlasov–Poisson system, where the solution typically develops increasingly fine-scale filamentation over time, leading to rapid rank growth unless collisional damping is introduced.

To verify and quantify this phenomenon, we perform a systematic study based on full-rank numerical simulations using the methods described in Section~\ref{sec:Numerical methods}, without employing any low-rank approximations. Specifically, we examine the singular value decay of Wigner–Poisson solutions for several values of \(H = 0.1,\ 0.5,\ 1,\ 8\), and under varying spatial resolutions. While similar trends are observed under different initial conditions and domain sizes—including cases involving strong Landau damping—the results presented here focus on the two-stream instability problem on the domain \(x \in [0, 4\pi]\), \(v \in [-2\pi, 2\pi]\), with the initial condition
\begin{equation}\label{eqn:ini_TSI}
    f_0(x,v) = \frac{v^2}{\sqrt{8\pi}} \left(2 + \cos\left(\frac{x}{2}\right)\right) e^{-v^2/2}.
\end{equation}

We first examine the solution behavior under different mesh resolutions \(N_x = 512,\ 1024,\ 2048\) and values of \(H = 0.1,\ 0.5,\ 1,\ 8\). As a reference benchmark, we identify \(N_x = 4096\) as sufficiently resolved for all cases. Notably, solutions for larger values of \(H\) exhibit convergence on coarser meshes. Using these resolved full-rank solutions, we analyze how the time evolution of the numerical rank, measured by the singular value decomposition (SVD), depends on both the mesh resolution and the quantum parameter \(H\). These findings serve as the motivation for developing a low-rank approximation framework for the Wigner–Poisson system.

For all visualizations, the color bar is fixed to the range \(f(t,x,v) \in [-0.3,\ 0.525]\). The colormap is deliberately split to emphasize quantum tunneling effects: values in the range \([-0.3,\ 0)\) are rendered in grayscale (black to white), while values in \([0,\ 0.525]\) are shown using a blue–yellow–red scale. This design choice highlights a fundamental feature in the Wigner framework: the possibility of quantum tunneling, in which electrons are no longer confined by classical potentials but can propagate through them. Such tunneling regions are indicated by the negativity of the Wigner function, which, unlike the classical Vlasov–Poisson distribution function, does not represent a true probability density.

Figures~\ref{fig:fullrank_T45_1} and~\ref{fig:fullrank_T45_2} present the phase-space solutions of the Wigner–Poisson system at time \(T = 45\) for four representative values of the quantum parameter \(H\). In each figure, the left panel shows the solution computed at a resolution of \(512 \times 512\), while the right panel corresponds to a fully resolved solution. The numerical results agree closely with those reported in~\cite{suh1991numerical}, and will serve as reference solutions when evaluating the accuracy of the proposed low-rank Wigner–Poisson solver. A key observation from these figures is that increasing \(H\) leads to a visible reduction in phase-space complexity: the solutions exhibit less fine-scale filamentation and smoother structures. This behavior differs significantly from that of the Vlasov–Poisson system, where phase-space dynamics typically evolve toward increasingly intricate filaments over time due to the absence of quantum smoothing effects.

The progressive simplification of phase-space structures over time suggests that the solution may exhibit low rank structure. In Figure \ref{fig:fullrank_H_SVDrank}, we plot the SVD rank required to capture $95\%$, $99\%$, $99.99\%$, $99.9999\%$, and $99.999999\%$ of the total energy as a function of time for a solution on a $2048 \times 2048$ mesh. Subfigures (a) through (d) correspond to $H = 0.1$, $0.5$, $1$, and $8$, respectively. For energy levels up to $99.9999\%$, the rank tends to level off and remain bounded in time. Furthermore, the maximum rank required to reach a given energy threshold decreases with increasing $H$. At the highest energy level ($99.999999\%$), corresponding to the full-order solution, the rank grows slowly over time and appears to stabilize more rapidly for larger $H$.

Figure \ref{fig:fullrank_H_relaSVDrank} shows the normalized rank (rank divided by $N_x$) as a function of time for SVD energy thresholds of $99\%$ and $99.9999\%$, across meshes of size $256 \times 256$, $512 \times 512$, $1024 \times 1024$, and $2048 \times 2048$. Two key trends emerge: first, the normalized rank required to achieve a given resolution decreases as \(N_x\) increases; second, for fixed mesh size, the normalized rank required to represent the solution decreases significantly as \(H\) increases.

In summary, this numerical study demonstrates that the Wigner–Poisson system exhibits strong low-rank structure in the phase space, particularly for moderate to large values of the quantum parameter \(H\). 
 
\begin{figure}[htbp]
    \centering
    \begin{subfigure}{0.49\textwidth}
        \centering
        \includegraphics[width=\linewidth]{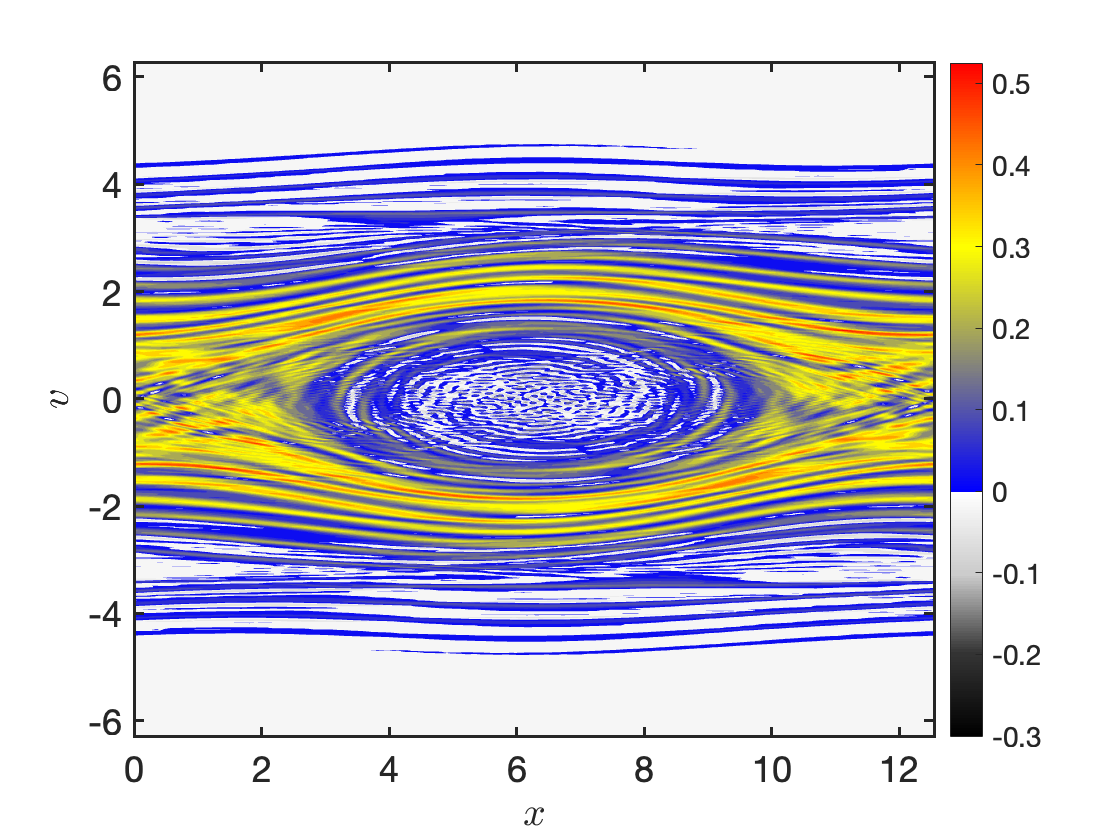}
        \caption{$H = 0.1$, and $N_x = 512$.}
        \label{fig:fullrank_H01_T45_Nx512}
    \end{subfigure}
    \begin{subfigure}{0.49\textwidth}
        \centering
        \includegraphics[width=\linewidth]{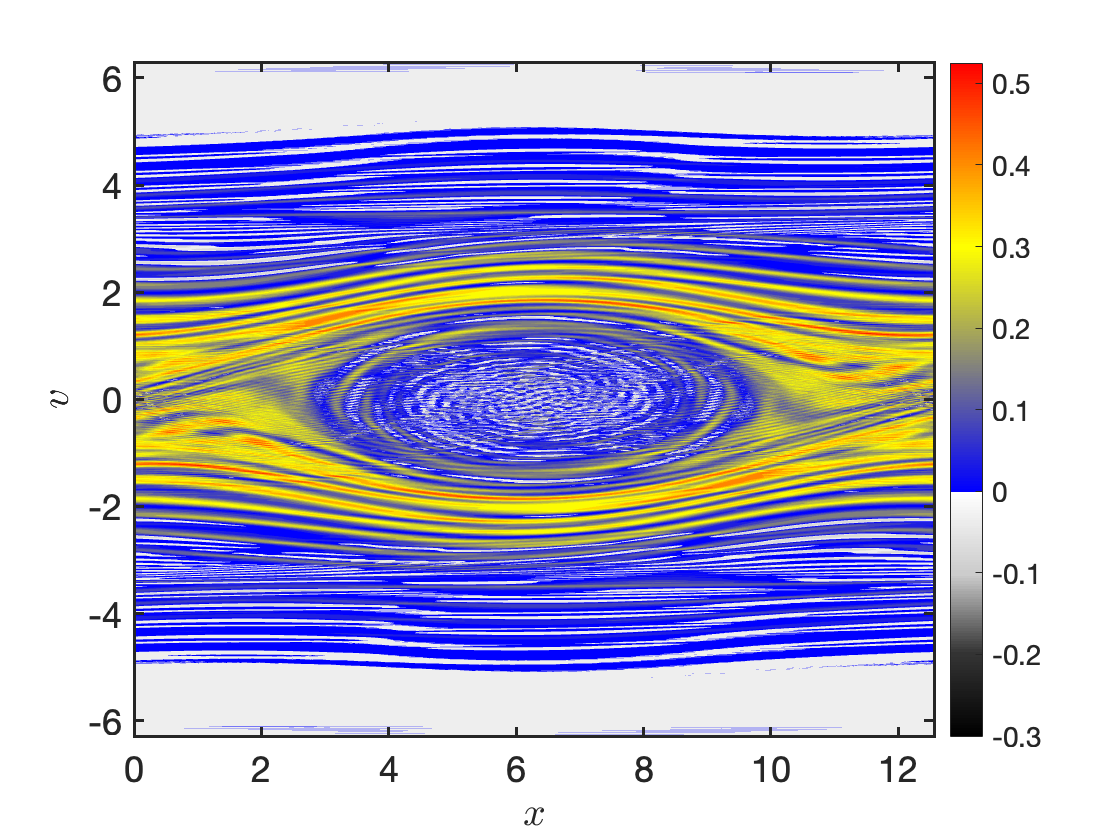}
        \caption{$H = 0.1$, and $N_x =2048$.}
        \label{fig:fullrank_H01_T45_Nx2048}
    \end{subfigure}
    
    \begin{subfigure}{0.49\textwidth}
        \centering
        \includegraphics[width=\linewidth]{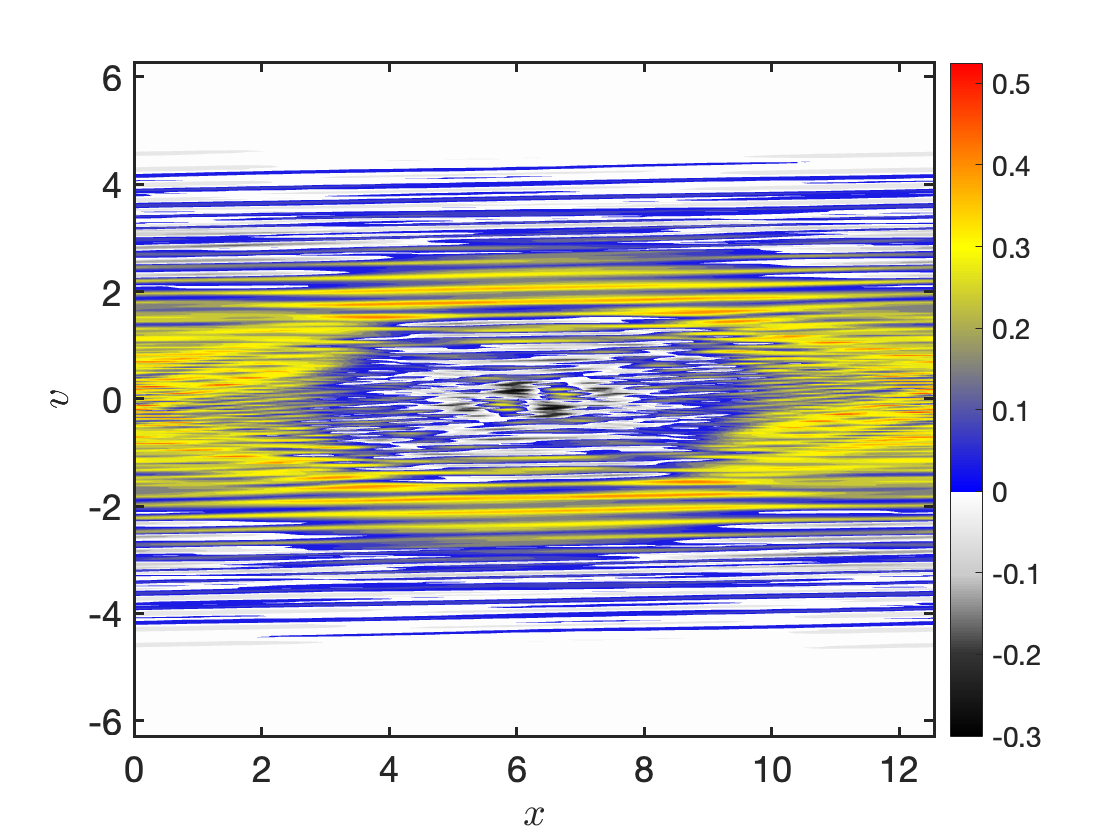}
        \caption{$H = 0.5$, and $N_x = 512$.}
        \label{fig:fullrank_H05_T45_Nx512}
    \end{subfigure}
    \begin{subfigure}{0.49\textwidth}
        \centering
        \includegraphics[width=\linewidth]{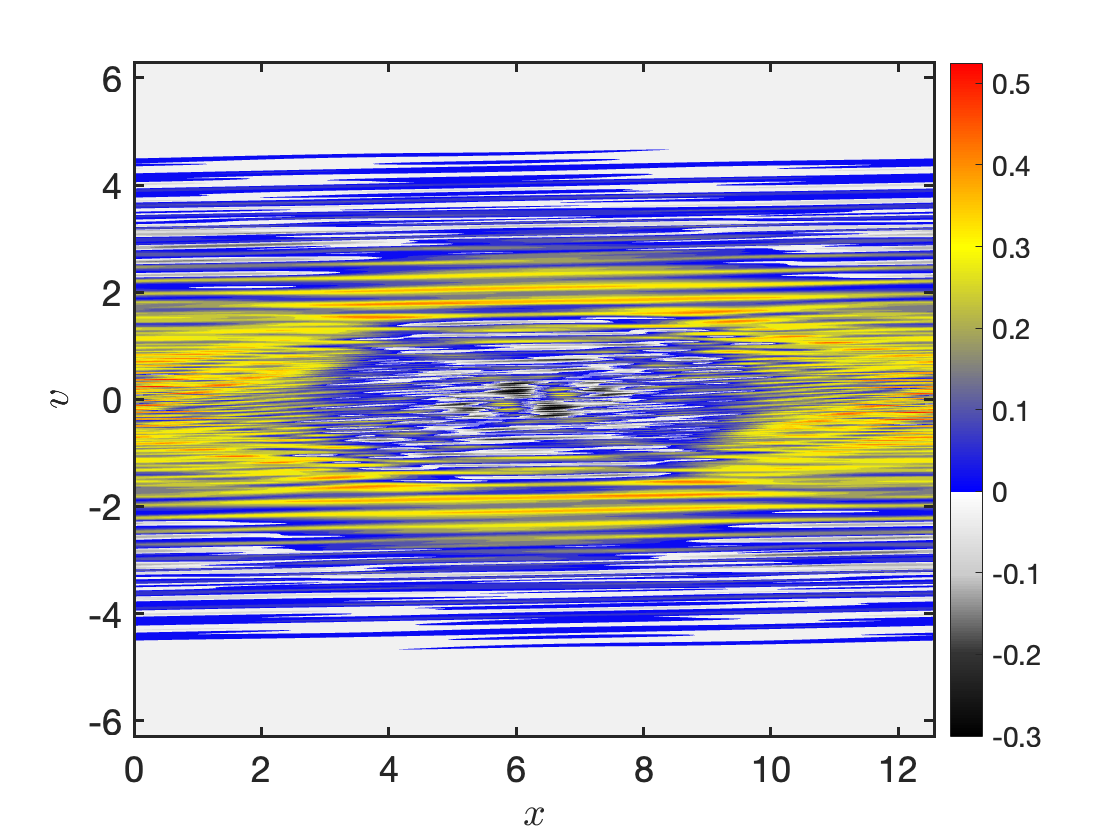}
        \caption{$H = 0.5$, and $N_x =2048$.}
        \label{fig:fullrank_H05_T45_Nx2048}
    \end{subfigure}  


    \caption{Phase-space solutions of the Wigner–Poisson system at time \(T = 45\). This figure shows the phase space solutions from full-rank simulations for different meshes and values of $H$ at $T = 45$. The left column shows results at a resolution of $512 \times 512$, while the right column shows “fully resolved” solutions. Two key observations: unlike in the Vlasov–Poisson system, it is possible to resolve the solution over long time runs on a fixed mesh; and as $H$ increases, fine-scale features in phase space diminish.}
    \label{fig:fullrank_T45_1}
\end{figure}

\begin{figure}[htbp]
    \centering
    \begin{subfigure}{0.49\textwidth}
        \centering
        \includegraphics[width=\linewidth]{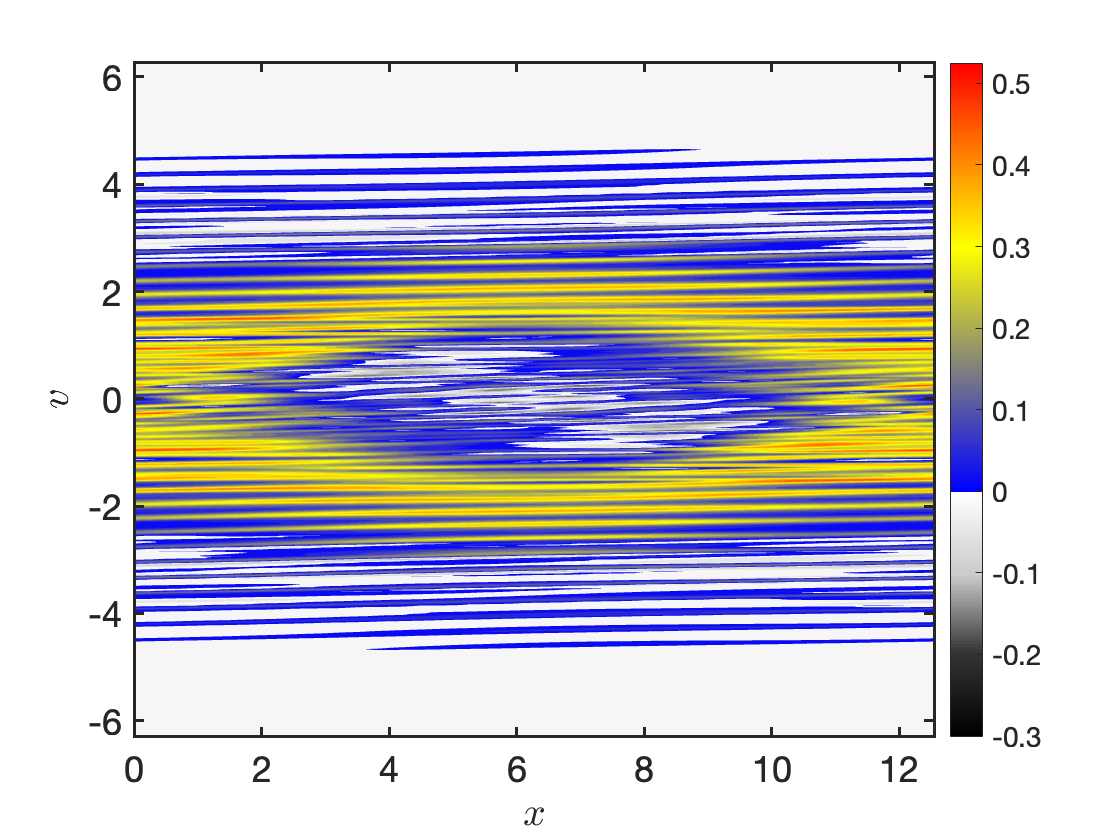}
        \caption{$H = 1$, and $N_x = 512$.}
        \label{fig:fullrank_H1_T45_Nx512}
    \end{subfigure}
    \begin{subfigure}{0.49\textwidth}
        \centering
        \includegraphics[width=\linewidth]{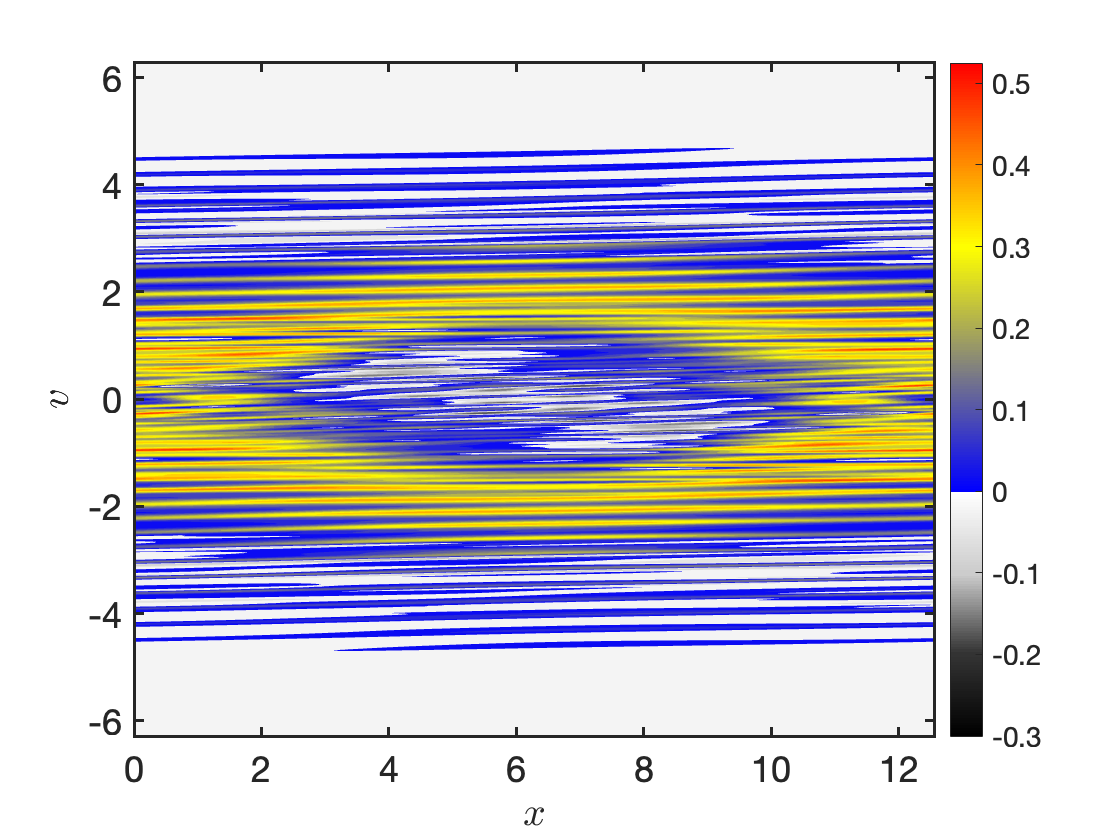}
        \caption{$H = 1$, and $N_x =1024$.}
        \label{fig:fullrank_H1_T45_Nx1024}
    \end{subfigure}
    
    \begin{subfigure}{0.49\textwidth}
        \centering
        \includegraphics[width=\linewidth]{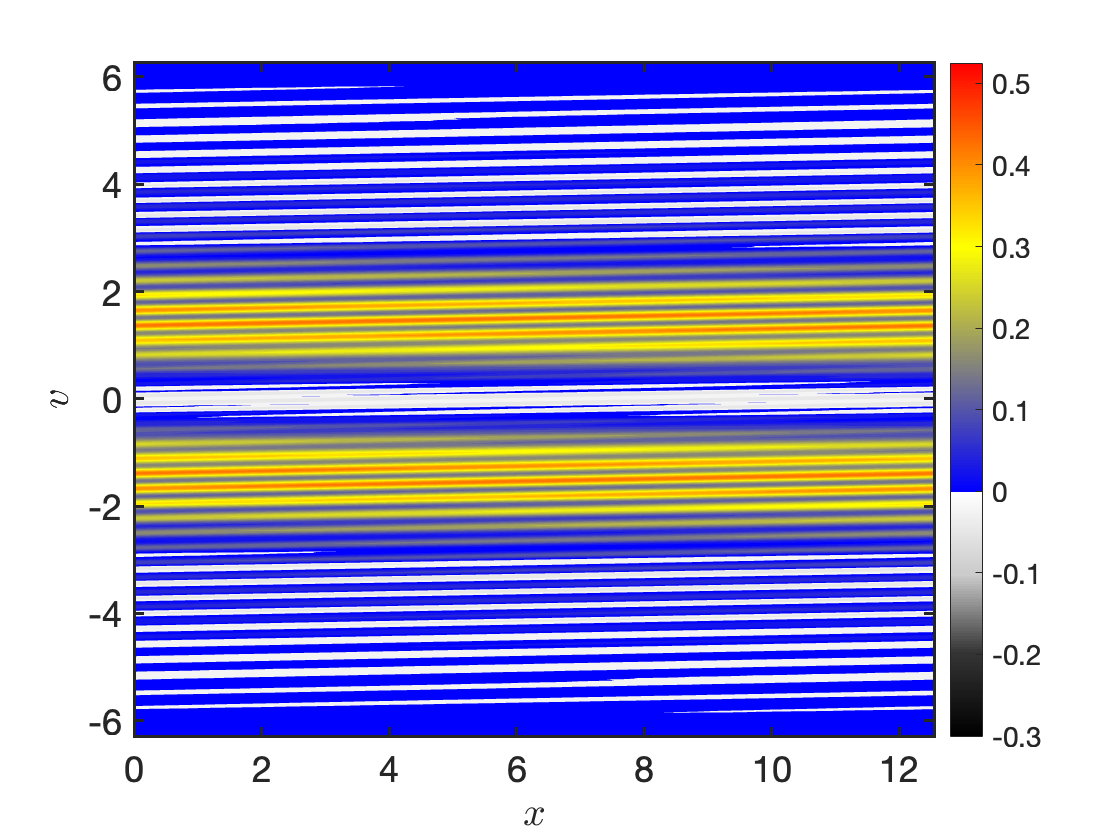}
        \caption{$H = 8$, and $N_x = 512$.}
        \label{fig:fullrank_H8_T45_Nx512}
    \end{subfigure}
    \begin{subfigure}{0.49\textwidth}
        \centering
        \includegraphics[width=\linewidth]{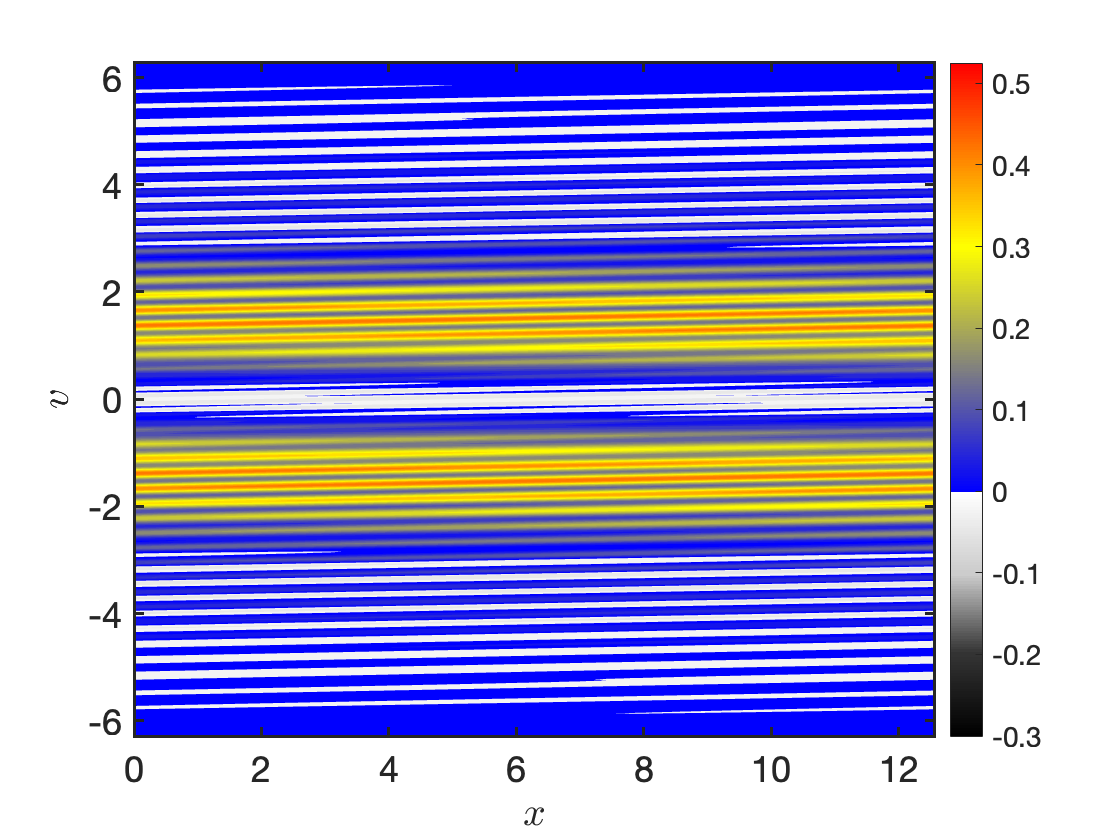}
        \caption{$H = 8$, and $N_x =1024$.}
        \label{fig:fullrank_H8_T45_Nx1024}
    \end{subfigure}
    \caption{Phase-space solutions of the Wigner–Poisson system at time \(T = 45\) (continued from Figure~\ref{fig:fullrank_T45_1}). See the figure caption of Figure~\ref{fig:fullrank_T45_1} for a detailed description. }
    \label{fig:fullrank_T45_2}
\end{figure}

\begin{figure}[htbp]
    \centering
    \begin{subfigure}{0.49\textwidth}
        \centering
        \includegraphics[width=\linewidth]{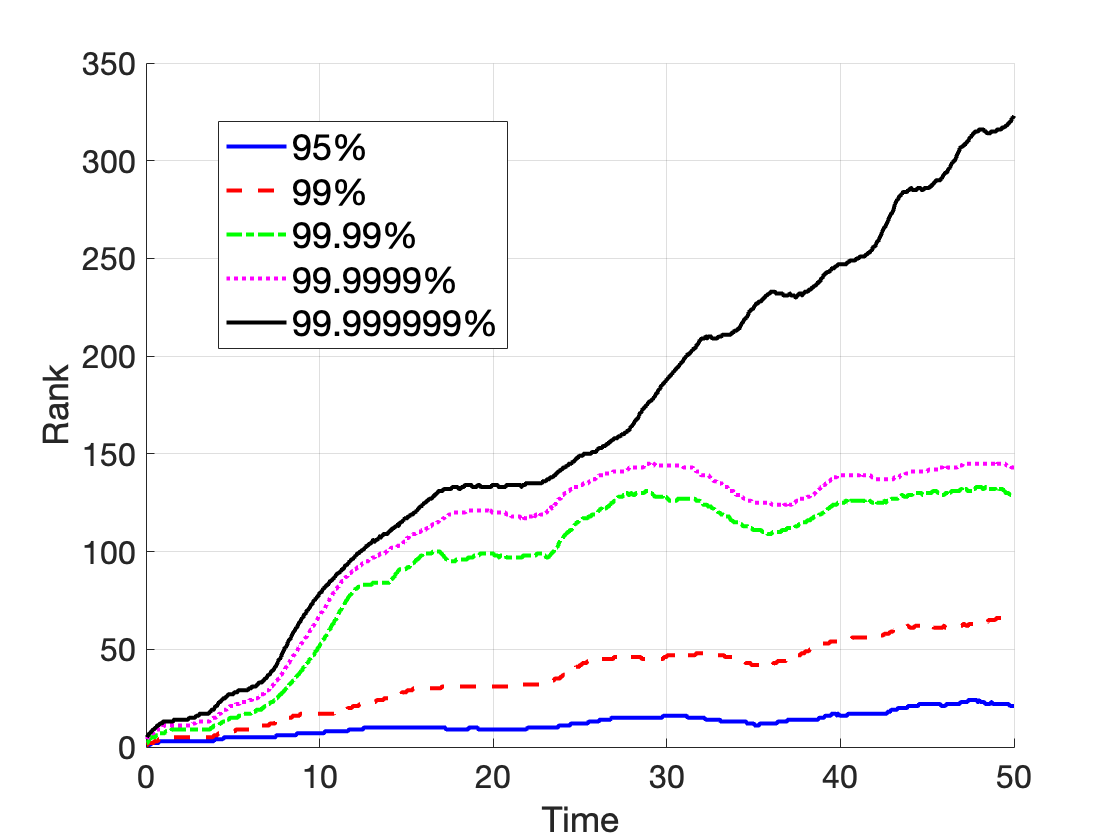}
        \caption{$H = 0.1$, and $N_x = 2048$.}
        \label{fig:fullrank_H01_SVDrank_2048}
    \end{subfigure}
    \begin{subfigure}{0.49\textwidth}
        \centering
        \includegraphics[width=\linewidth]{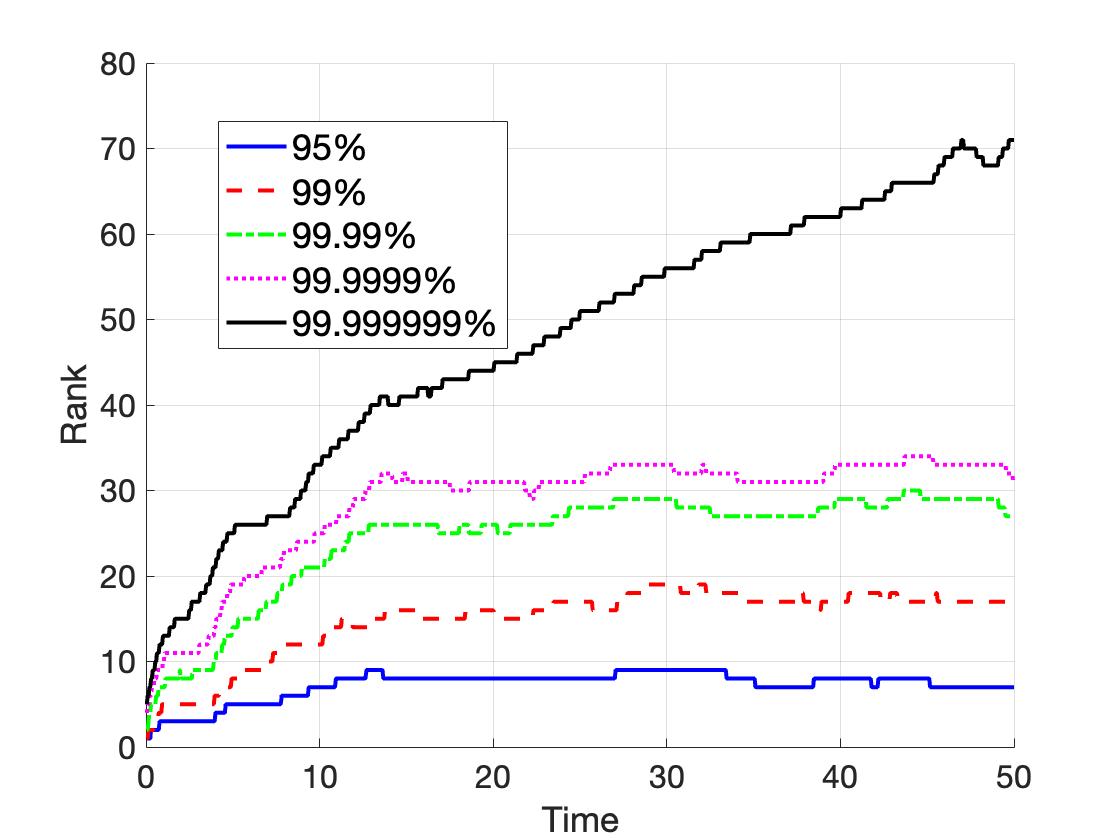}
        \caption{$H = 0.5$, and $N_x =2048$.}
        \label{fig:fullrank_H05_SVDrank_2048}
    \end{subfigure} 
    \begin{subfigure}{0.49\textwidth}
        \centering
        \includegraphics[width=\linewidth]{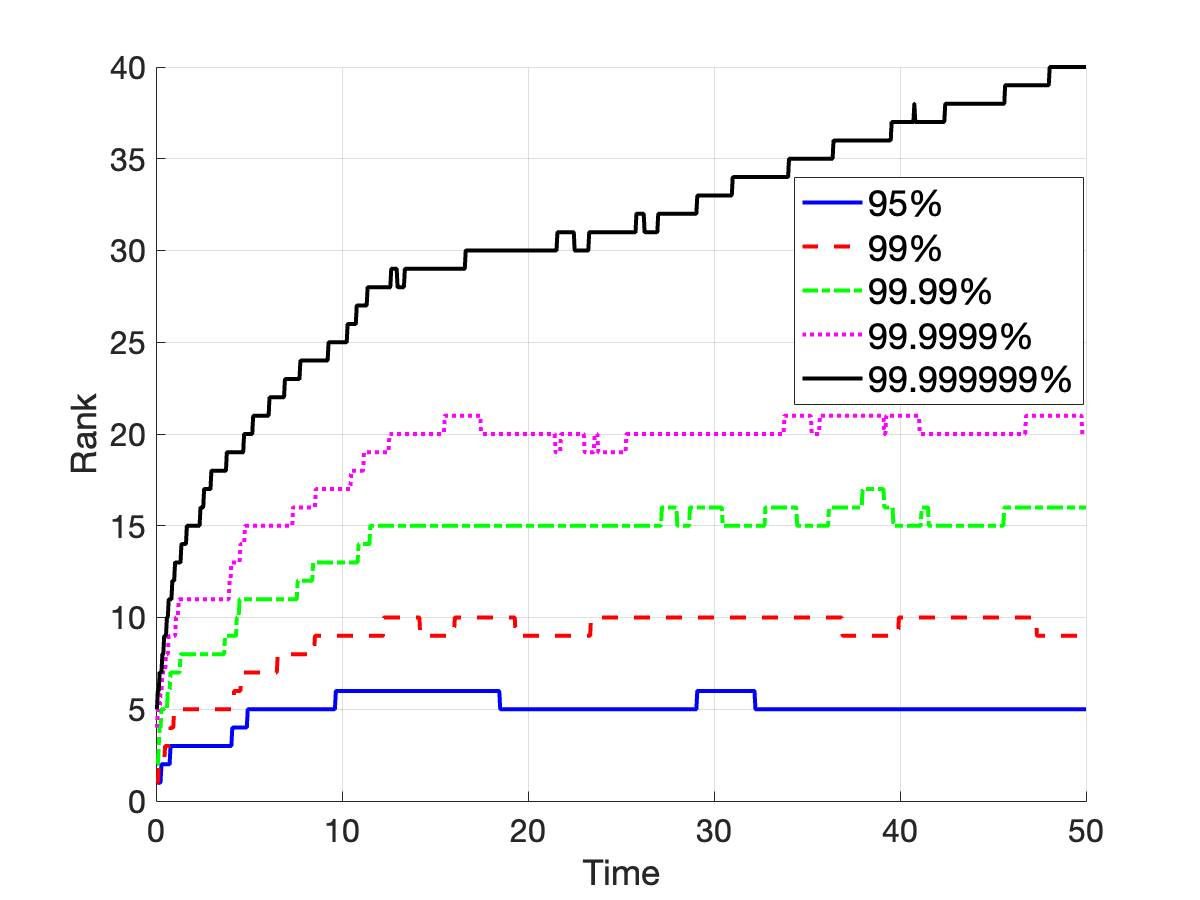}
        \caption{$H = 1$, and $N_x = 2048$.}
        \label{fig:fullrank_H1_SVDrank_2048}
    \end{subfigure}
    \begin{subfigure}{0.49\textwidth}
        \centering
        \includegraphics[width=\linewidth]{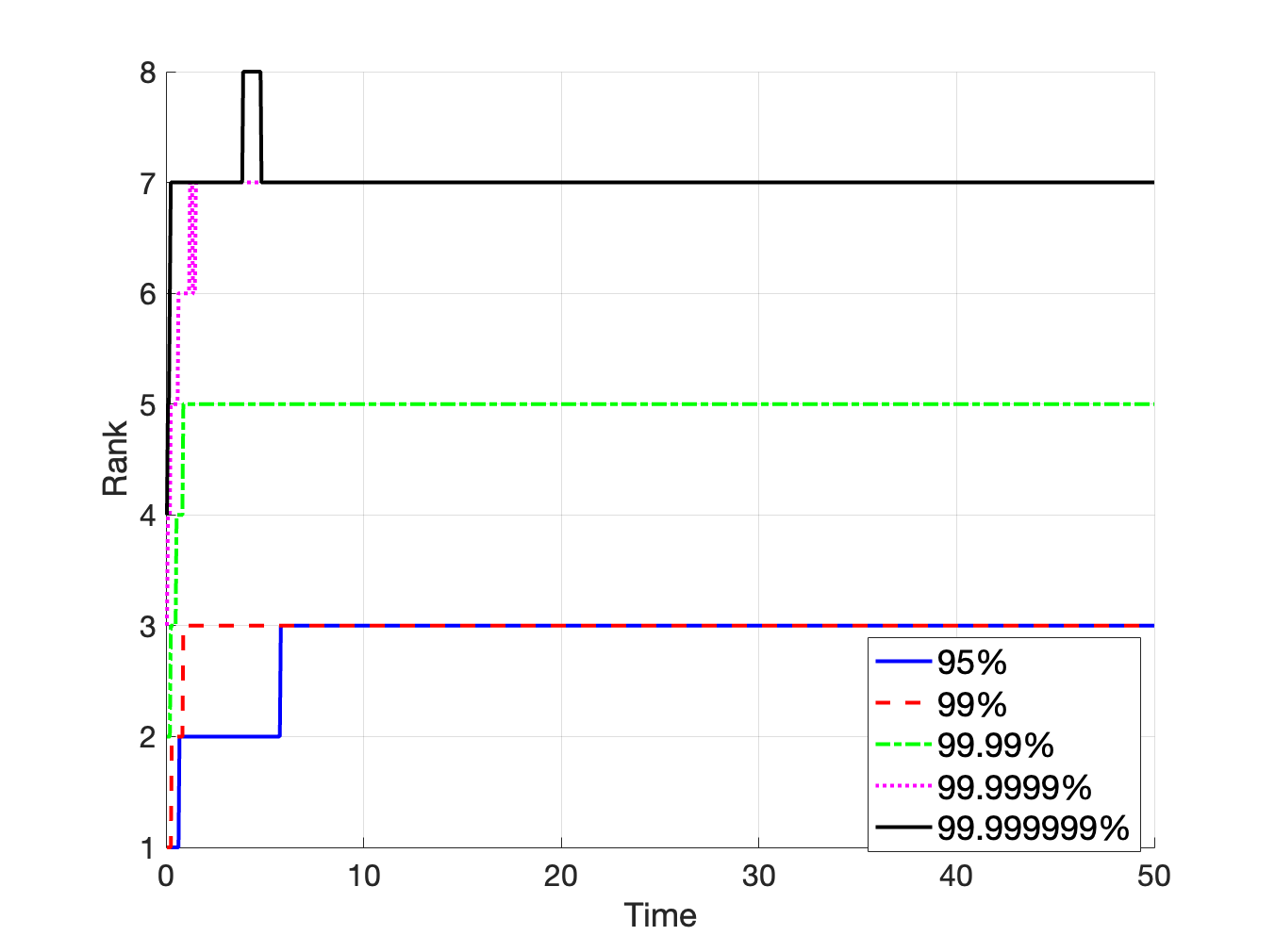}
        \caption{$H = 8$, and $N_x =2048$.}
        \label{fig:fullrank_H8_SVDrank_2048}
    \end{subfigure}  
    \caption{Time evolution of SVD ranks. This figure shows the SVD ranks of full-rank solutions as a function of time for different values of $H \in \{0.1, 0.5, 1, 8\}$ on a $2048 \times 2048$ mesh. The rank is plotted for five energy thresholds: $95\%$, $99\%$, $99.99\%$, $99.9999\%$, and $99.999999\%$. The maximum rank required to capture the solution at a given energy threshold decreases significantly as $H$ increases.}
    \label{fig:fullrank_H_SVDrank}
\end{figure}

\begin{figure}[htbp]
    \centering
    \begin{subfigure}{0.49\textwidth}
        \centering
        \includegraphics[width=\linewidth]{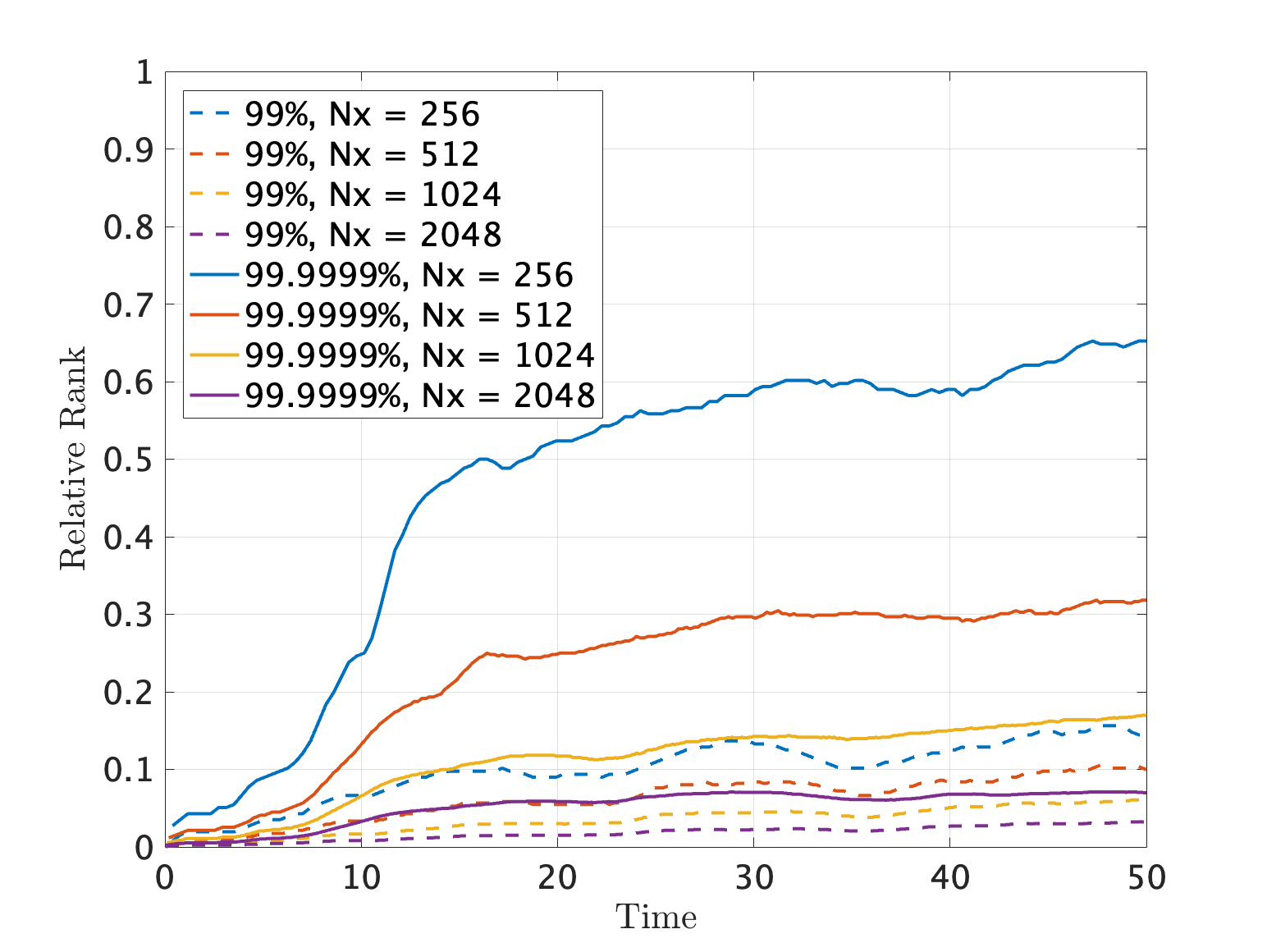}
        \caption{$H = 0.1$.}
        \label{fig:fullrank_H01_relaSVDrank_2048}
    \end{subfigure}
    \begin{subfigure}{0.49\textwidth}
        \centering
        \includegraphics[width=\linewidth]{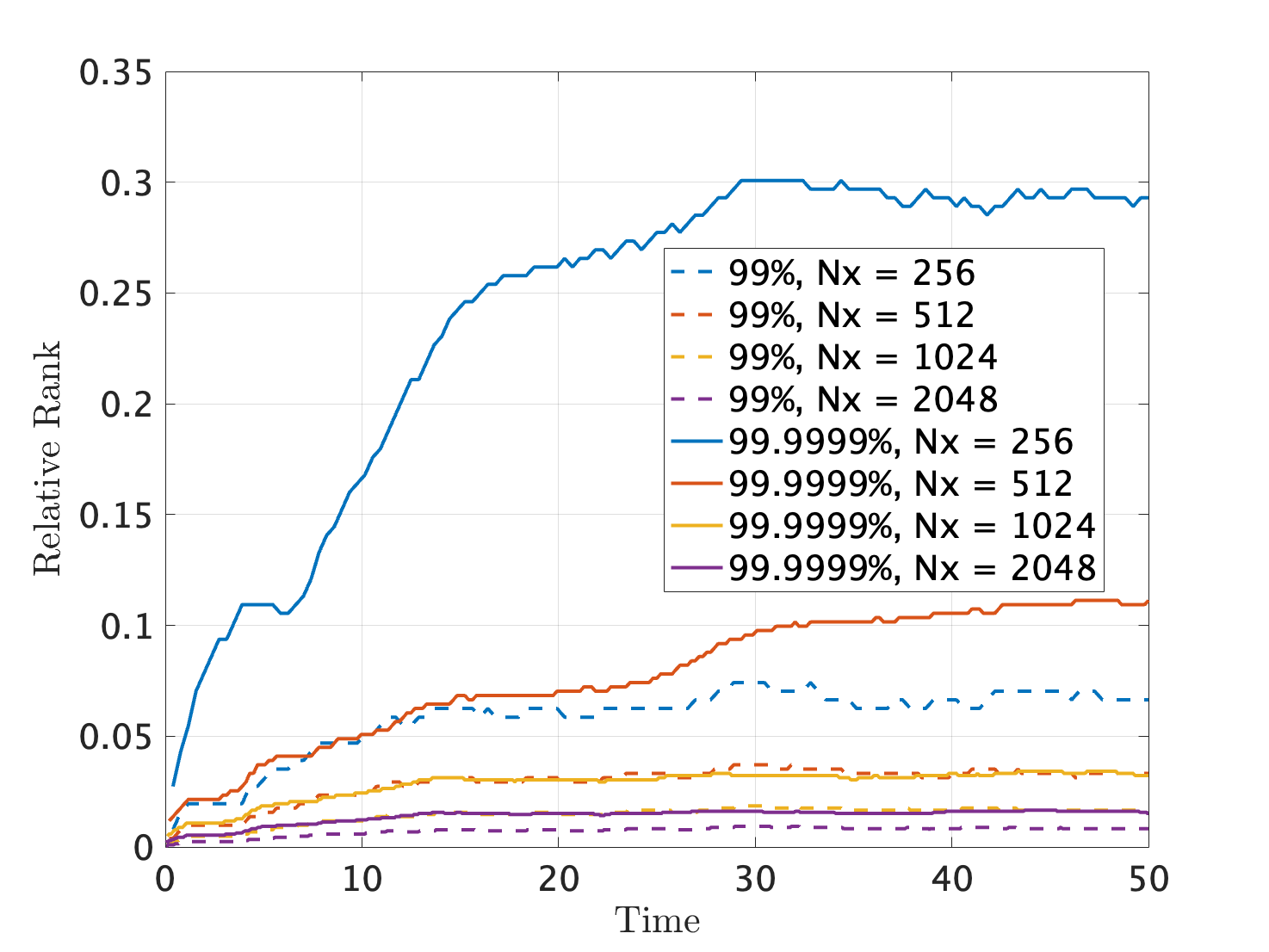}
        \caption{$H = 0.5$.}
        \label{fig:fullrank_H05_relaSVDrank_2048}
    \end{subfigure}  
    \begin{subfigure}{0.49\textwidth}
        \centering
        \includegraphics[width=\linewidth]{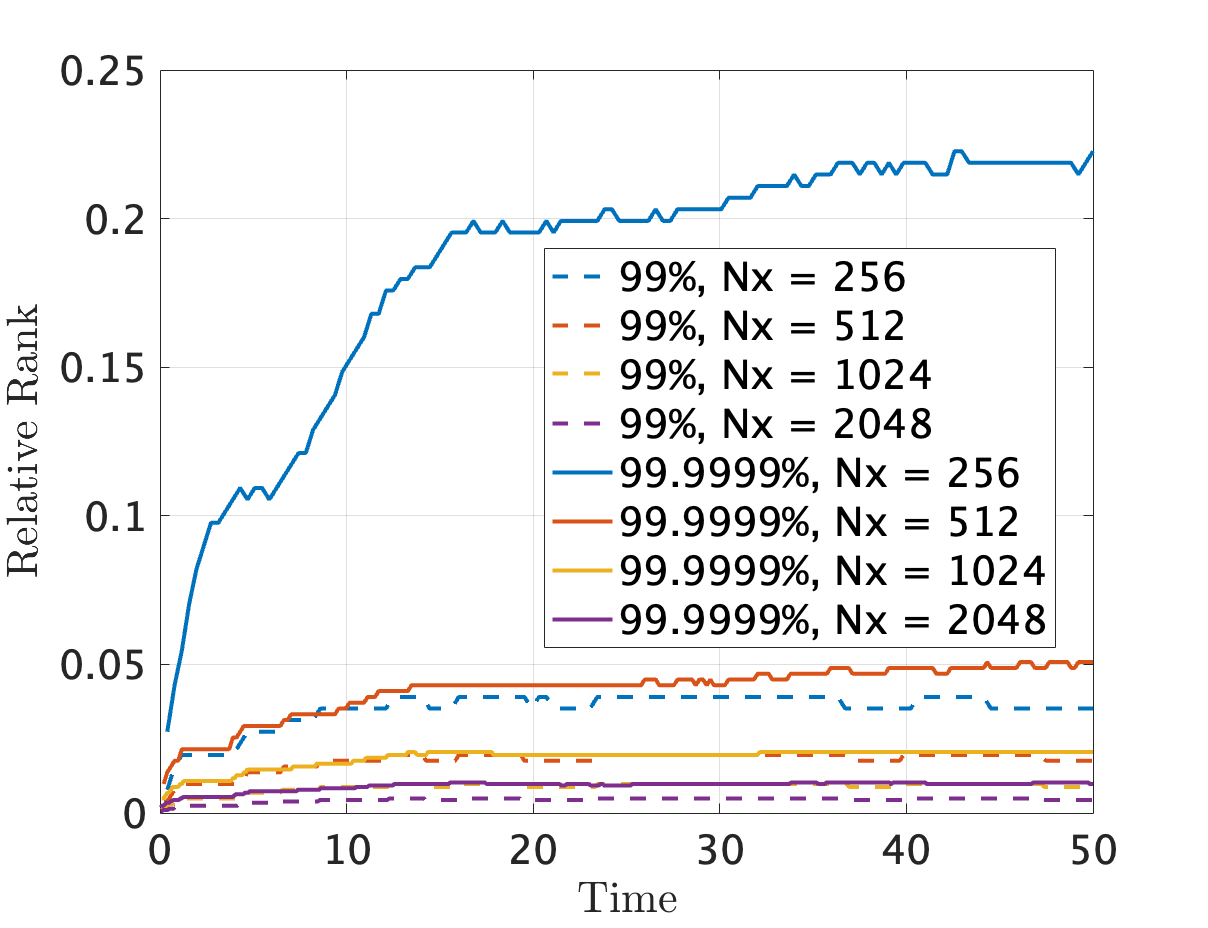}
        \caption{$H = 1$.}
        \label{fig:fullrank_H1_relaSVDrank_2048}
    \end{subfigure}
    \begin{subfigure}{0.49\textwidth}
        \centering
        \includegraphics[width=\linewidth]{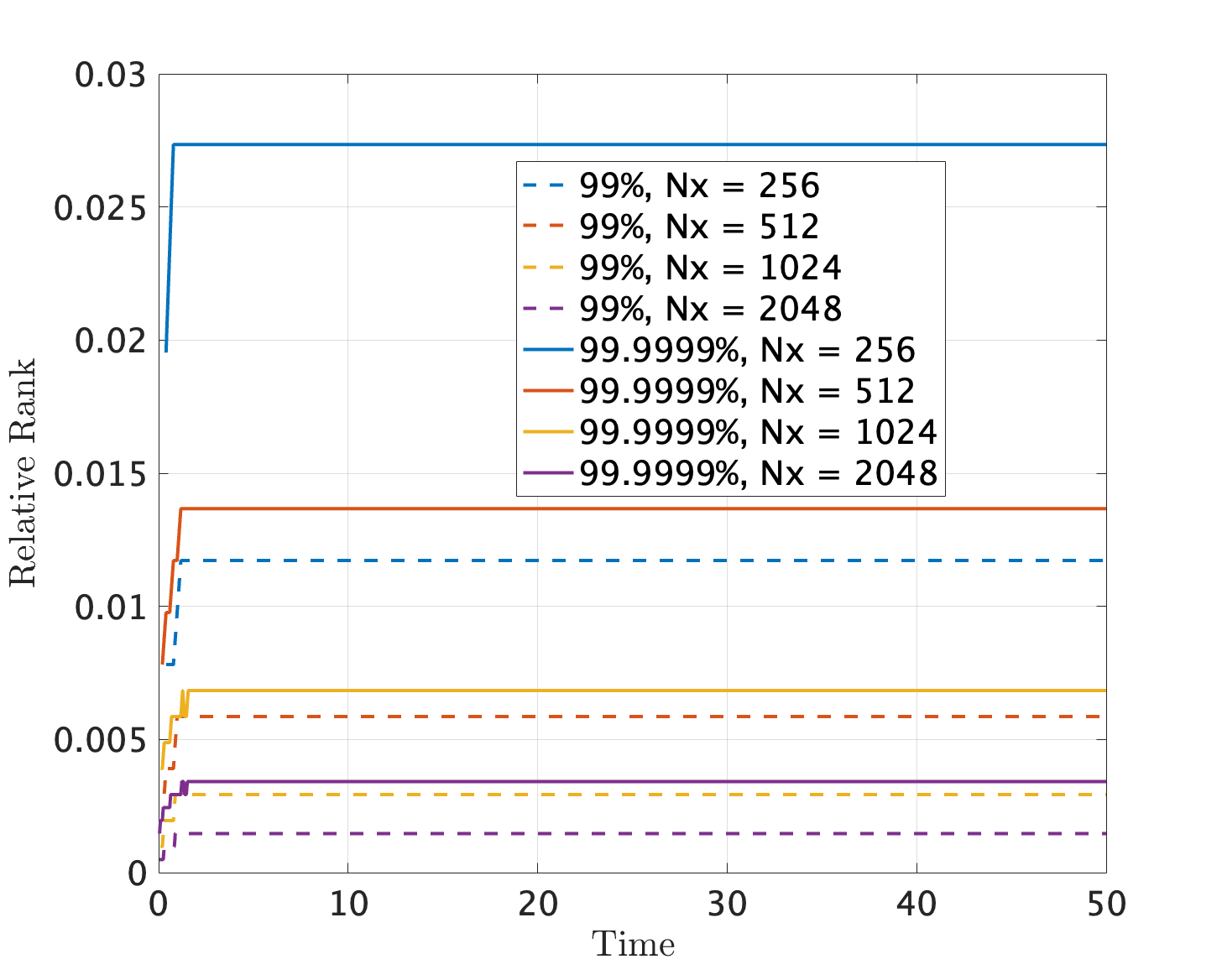}
        \caption{$H = 8$.}
        \label{fig:fullrank_H8_relaSVDrank_2048}
    \end{subfigure}  
    
   \caption{Time evolution of normalized ranks. This figure shows the time evolution of the SVD rank normalized by the mesh resolution, $N_x$. Subfigures (a)–(d) correspond to $H = 0.1$, $0.5$, $1$, and $8$, respectively. The normalized SVD rank is computed for energy thresholds of $99\%$ and $99.9999\%$ of the total energy in the full-rank solution. As \(N_x\) increases, the normalized rank required to capture the solution decreases, indicating improved compressibility at higher resolution. Additionally, the normalized rank itself decreases significantly as \(H\) increases, further supporting the emergence of low-rank structure in the moderate-to-high \(H\) regime.}
    \label{fig:fullrank_H_relaSVDrank}
\end{figure}

\section{A novel sampling-based adaptive rank method with operator splitting for the Wigner-Poisson System}\label{sec:Numerical methods}


{Motivated by the low-rank behavior observed in the previous section, we propose a sampling-based adaptive rank solver based on operator splitting for the Wigner–Poisson system. When applied to a model with low-rank structure, such a solver can effectively save computational cost. The method builds upon an improved full-rank solver, described in detail in Section~\ref{subsec:SSmethod}, where we enhance the Strang splitting framework \cite{arnold1996operator} with structure-preserving modifications to the Fourier update step. This structure-preserving full-rank solver was used to generate the numerical solutions presented in the previous section.}
In particular, we apply a Strang splitting strategy to decouple advection and nonlocal interaction terms, for which a conservative SL WENO scheme is applied for solving the advection subproblem (Section~\ref{subsec:SL}) and a structure-preserving Fourier-based approach is applied to handle the nonlocal term (Section~\ref{subsec:Fourier}). In Section~\ref{subsec:adaptive_rank_method}, we incorporate a sampling-based adaptive rank strategy, based on the adaptive cross approximation (ACA), to efficiently compress the numerical solution at each time step. The resulting scheme achieves a complexity of $\mathcal{O}(Nr^2+r^3)$ with $N$ being the mesh resolution per dimension and $r$ being the rank of the solution. To the best of our knowledge, this is the first adaptive rank approach for solving the Wigner-Poisson under the operator splitting framework.

\subsection{A full rank solver based on second order Strang splitting}\label{subsec:SSmethod}

The base method is the second-order Strang splitting solver first proposed in \cite{arnold1996operator}, with a modification of the scheme to achieve structure-preserving property in the Fourier update step (see Section \ref{subsec:Fourier}). Furthermore, the full-rank solver presented here lays the foundation for designing the adaptive-rank solver in Section \ref{subsec:adaptive_rank_method}. Let $n$ be an integer indicating the time step level and $\Delta t$ be a full time step, we describe the full rank solver using a semi-discrete solution $f(x,v,n\Delta t)$. 
Starting from $f(x,v,n\Delta t)$, the method first takes a half step in time, $\frac{\Delta t}{2}$, from $n \Delta t$ to $\left(n + \frac{1}{2}\right) \Delta t$, of advection,
\begin{align}\label{eq:split_adv1}
 \frac{\partial f}{\partial t} + v \frac{\partial f}{\partial x} &= 0,
\end{align}
with the resulting intermediate solution $f^{*,n+1/2}$. Then, it is followed by computing the density $\int dv\, f^{*,n+1/2}$ and solving for the potential,
\begin{align}\label{eq:split_poisson}
    -\frac{\partial^2 \Phi }{\partial x^2} = \int dv\, f^{*,n+1/2} - 1.
\end{align}
Under the frozen field approximation, meaning fields are held fixed, the method takes a full $\Delta t$ time  step 
of the Wigner operator with the initial condition $f^{*,n+1/2}$,
\begin{align}\label{eq:split_dbintegral}
    \frac{\partial f}{\partial t} &= - \frac{i }{2 \pi H^2} \int \int dv' dx' \exp\left( i \frac{v' - v}{H} x' \right) \left[ \Phi\left(x + \frac{x'}{2}\right) - \Phi\left(x - \frac{x'}{2}\right)\right] f(x,v', t),
\end{align}
with the result denoted by $f^{*,n+1}$. Finally, the method advances $f^{*,n+1}$ by a half time step $\Delta t/2$ to solve the advection subproblem~\eqref{eq:split_adv1}, and obtains the numerical solution $f^{n+1}$. Since we consider periodic problems, this approach can be easily extended to higher-order time splitting in higher dimensions \cite{gucclu2014arbitrarily}. In subsequent subsections, we describe the SL WENO scheme for advection and the Fourier approach for handling the Wigner potential operator.

In the following sections, unless otherwise specified, we consider a uniform grid on a periodic spatial domain $x \in [a,b]$, where the grid points are defined as $x_i = a + (i-1)\Delta x$, with $\Delta x = (b-a)/N_x$. The grid is ordered as
\begin{equation}\label{eqn:spatial_dis}
    a = x_1 < x_2 < \cdots < x_{N_x} = b - \Delta x.
\end{equation}
{Without loss of generality, we set $x \in [a,b] = [0, L_x]$ from now on.}
Similarly, we define a uniform grid on a symmetric velocity domain $v \in [-L_v, L_v]$, where the grid points are given by $v_i = -L_v + (i-1)\Delta v$, with $\Delta v = 2L_v / (N_v-1)$. The velocity grid satisfies
\begin{equation}\label{eqn:velocity_dis}
-L_v = v_1 < v_2 < \cdots < v_{N_v} = L_v.
\end{equation}
Let $f^{l}_{i,j}$ denotes the numerical solution at $(x_i,v_j)$ represented in a matrix form, where $l \in \{(*,n+1/2), (*,n+1),n,n+1\}$ represent different time level and {$\Phi^{n+1/2}_i$ represents the potential approximation at $(x_i,(n+1/2)\Delta t)$.} 

We note that the Poisson equation is solved using a standard fourth-order finite difference scheme~\cite{ames2014numerical}. For the right-hand side of the Poisson equation, the integration over velocity is computed using the trapezoidal rule on a symmetric, finite velocity domain. Since \(f(x,v)\) decays exponentially as \(|v| \to \infty\), the quadrature error is primarily determined by the domain truncation~\cite[Chap.~17]{boyd2001chebyshev}. For conservation of mass, we introduce an additional spatial integral in   \Cref{sec:full_rank_solver_summary}.
This spatial integration again uses a trapezoidal rule, however, as the integration is performed over a periodic domain, the method is spectrally accurate ~\cite[Page 458]{boyd2001chebyshev}. In implementation, both integrals reduce to simple discrete summations multiplied by \(\Delta v\) and \(\Delta x\), respectively.

\subsubsection{Conservative Semi-Lagrangian WENO Scheme}\label{subsec:SL}
We employ the conservative semi-Lagrangian WENO scheme introduced in~\cite{qiu2010conservative} to advance the advection steps in \eqref{eq:split_adv1}. 
To be self-contained, we briefly review the core idea of the method. {We use the update from $f^{n}$ to $f^{*,n+1/2}$ as an illustrative example; the update from $f^{*,n+1}$ to $f^{n+1}$ is handled similarly. 
{For clarity, we only present the third-order formulation for the case of a positive wave velocity and the negative velocity case can be derived by symmetry. 
In practice, we use the fifth-order scheme in our solver.}

The SL framework is based on the method of characteristics, where the advection equation is solved by following the characteristics backward in time given by characteristic equation 
$
\frac{dx}{dt} = v. 
$
{Let $f^n_j(x)$ denote the semi-discrete solution at time $t^n = n\Delta t, \, v = v_j$ and $f^{n}_{i,j} = f^n_j(x_i)$.}
For any fixed $v_{j}$, to compute $f^{*,n+1/2}_{i,j}$, one traces back along the characteristic to the departure point $x_d := x_i - v_j (\frac{\Delta t}{2})$ at time $t^{n+1/2} = (n+1/2)\Delta t$ with 
$f^{*,n+1/2}_{i,j} = f^n_j(x_d)$. 
{In particular, in the setting of a periodic domain, let $x_d = \mod\left(x_i - v_j (\frac{\Delta t}{2}),\, L_x\right)$. We start by building the conservative semi-Lagrangian WENO scheme for $v_j > 0$ and the case for $v_j < 0$ will be similar. For $v_j > 0$, if the shift distance $|v_j (\frac{\Delta t}{2})|$ is small, i.e., $|v_j (\frac{\Delta t}{2})| \le \frac{1}{2}\Delta x$, then $x_d \in [x_{i}-\frac{\Delta x}{2},x_i]$, and a left-biased (i.e., upwind for $v_j > 0$) piecewise cubic interpolation yields:
\begin{align}
f^{*,n+1/2}_{i,j} = f^n_j(x_d) &\approx f^n_{i,j} + \left(-\frac{1}{6} f^n_{i-2,j} + f^n_{i-1,j} - \frac{1}{2} f^n_{i,j} - \frac{1}{3} f^n_{i+1,j} \right) \xi + \left( \frac{1}{2} f^n_{i-1,j} - f^n_{i,j} + \frac{1}{2} f^n_{i+1,j} \right) \xi^2 \nonumber \\
&\quad + \left( \frac{1}{6} f^n_{i-2,j} - \frac{1}{2} f^n_{i-1,j} + \frac{1}{2} f^n_{i,j} - \frac{1}{6} f^n_{i+1,j} \right) \xi^3,
\end{align}
where $\xi = (x_d - x_{i}) / \Delta x \in [-\frac{1}{2},0]$. For larger shift distances, i.e., $|v_j (\Delta t / 2)| > \frac{1}{2}\Delta x$ and $v_j > 0$, define the index
\[
k = \mod\left(i - \left\lfloor \frac{v_j}{\Delta x} \left(\frac{\Delta t}{2}\right) + 0.5 \right\rfloor, N_x\right),
\]
so that $x_d \in [x_k - \frac{1}{2} \Delta x,\, x_k + \frac{1}{2} \Delta x)$ and $\xi = (x_d - x_k)/\Delta x \in [-\frac{1}{2},\frac{1}{2})$. Then, for $\xi \in [-\frac{1}{2},0]$, the interpolation becomes:
\begin{align}\label{eq:SL_k_intepolation}
f^{*,n+1/2}_{i,j} = f^n_j(x_d) &\approx f^n_{k,j} + \left(-\frac{1}{6} f^n_{k-2,j} + f^n_{k-1,j} - \frac{1}{2} f^n_{k,j} - \frac{1}{3} f^n_{k+1,j} \right) \xi \nonumber \\
&\quad + \left( \frac{1}{2} f^n_{k-1,j} - f^n_{k,j} + \frac{1}{2} f^n_{k+1,j} \right) \xi^2 \nonumber \\
&\quad + \left( \frac{1}{6} f^n_{k-2,j} - \frac{1}{2} f^n_{k-1,j} + \frac{1}{2} f^n_{k,j} - \frac{1}{6} f^n_{k+1,j} \right) \xi^3,
\end{align}}

\noindent for $\xi \in (0,\frac{1}{2})$, the stincel shifts be one to the right.  
To build an upwind WENO reconstruction, we rewrite the upwind polynomial as
\begin{equation}\label{eq:SL_update}
  f^{*,n+1/2}_{i,j} = f^n_{k,j} - \xi \left( \hat{f}^n_{k+1/2,j}(\xi) - \hat{f}^n_{k-1/2,j}(\xi) \right), 
\end{equation}
where $\hat{f}^n_{k+1/2,j}(\xi)$ and $\hat{f}^n_{k-1/2,j}(\xi)$ are in the form of fluxes. 
The fluxes $\hat{f}^n$ are determined by $f^n$ and depend on order of the scheme and the nonlinear WENO weights. 
{For the linear weights and $\xi \in [-\frac{1}{2},0]$, \eqref{eq:SL_update} must exactly match \eqref{eq:SL_k_intepolation}. For the remaining case $\xi \in (0,\frac{1}{2})$, corresponding to $x_d \in (x_k, x_k + \frac{1}{2}\Delta x)$, the upwind stencils are shifted one index to the right; see Figure \ref{fig:SLdespcription} for a visualization of stencil choices.}
In particular, the flux function $\hat{f}^n_{k-1/2,j}(\xi)$ will be expressed as:
\begin{equation*}
    \hat{f}^n_{k-1/2,j}(\xi) = 
    \begin{cases}
        (f^n_{k-2,j},f^n_{k-1,j},f^n_{k,j}) \, C^L_3 \, (1, |\xi|, \xi^2)^{\mathrm{T}}, \quad \xi \in [-\frac{1}{2},0]\\
        (f^n_{k-1,j},f^n_{k,j},f^n_{k+1,j}) \, C^L_3 \,(1, |\xi|, \xi^2)^{\mathrm{T}}, \quad \xi \in (0,\frac{1}{2}).
    \end{cases}
\end{equation*}
Here, the linear coefficient matrix $C^L_3$ is given by
\begin{equation}\label{eq:SL_linearmatrix}
    C^L_3 = \begin{bmatrix} -\frac{1}{6} & 0 & \frac{1}{6} \\ \frac{5}{6} & \frac{1}{2} & -\frac{1}{3} \\ \frac{1}{3} & -\frac{1}{2} & \frac{1}{6} \end{bmatrix} = \begin{bmatrix} -\frac{1}{2}\omega_1 & 0 & \frac{1}{6} \\ \frac{3}{2}\omega_1 + \frac{1}{2}\omega_2 & \frac{1}{2} & -\frac{1}{3} \\ \frac{1}{2}\omega_2 & -\frac{1}{2} & \frac{1}{6} \end{bmatrix}, 
\end{equation}
with linear weights $\omega_1 = \frac{1}{3}, \, \omega_2 = 1-\omega_1$. 
We now replace $C^L_3$ with a WENO-type nonlinear weights matrix $\tilde{C}^L_3$.  In the SL WENO context, it is important to note that it is sufficient to only apply WENO to the leading order term in the polynomial reconstruction.
Similar to the standard WENO scheme, we compute the smoothness indicators based on the chosen stencils:
 \[
\beta_1 = (f^n_{k-1,j}-f^n_{k-2,j})^2, \quad \beta_2 = (f^n_{k-1,j}-f^n_{k,j})^2.
\]
We define the nonlinear weights as ${\omega}'_1 = \omega_1\frac{1}{(\varepsilon+\beta_1)^2}$, ${\omega}'_2 = \omega_2 \frac{1}{(\varepsilon+\beta_2)^2}$. Then, we normalize them
\[
\tilde{\omega}_1 = \frac{{\omega}'_1}{{\omega}'_1+{\omega}'_2}, \quad \tilde{\omega}_2 = \frac{{\omega}'_2}{{\omega}'_1+{\omega}'_2},
\]
and substitute into \eqref{eq:SL_linearmatrix} to construct the WENO-type weights matrix $\tilde{C}^L_3$.
{We have now constructed the third-order SL WENO scheme for positive velocity $v_j > 0$. Similarly, for negative velocity $v_j < 0$, the third-order SL WENO scheme is constructed by a symmetric flip of the points used in the approximation for $v_j > 0$.}

In this paper, we use the fifth-order conservative SL WENO scheme to solve equations \eqref{eq:split_adv1}; the fifth order spatial accuracy significantly mitigates numerical diffusion with relatively coarse computational meshes \cite{gucclu2014arbitrarily}. 
\begin{figure}[htbp]
  \centering
  \includegraphics[width=0.6\textwidth]{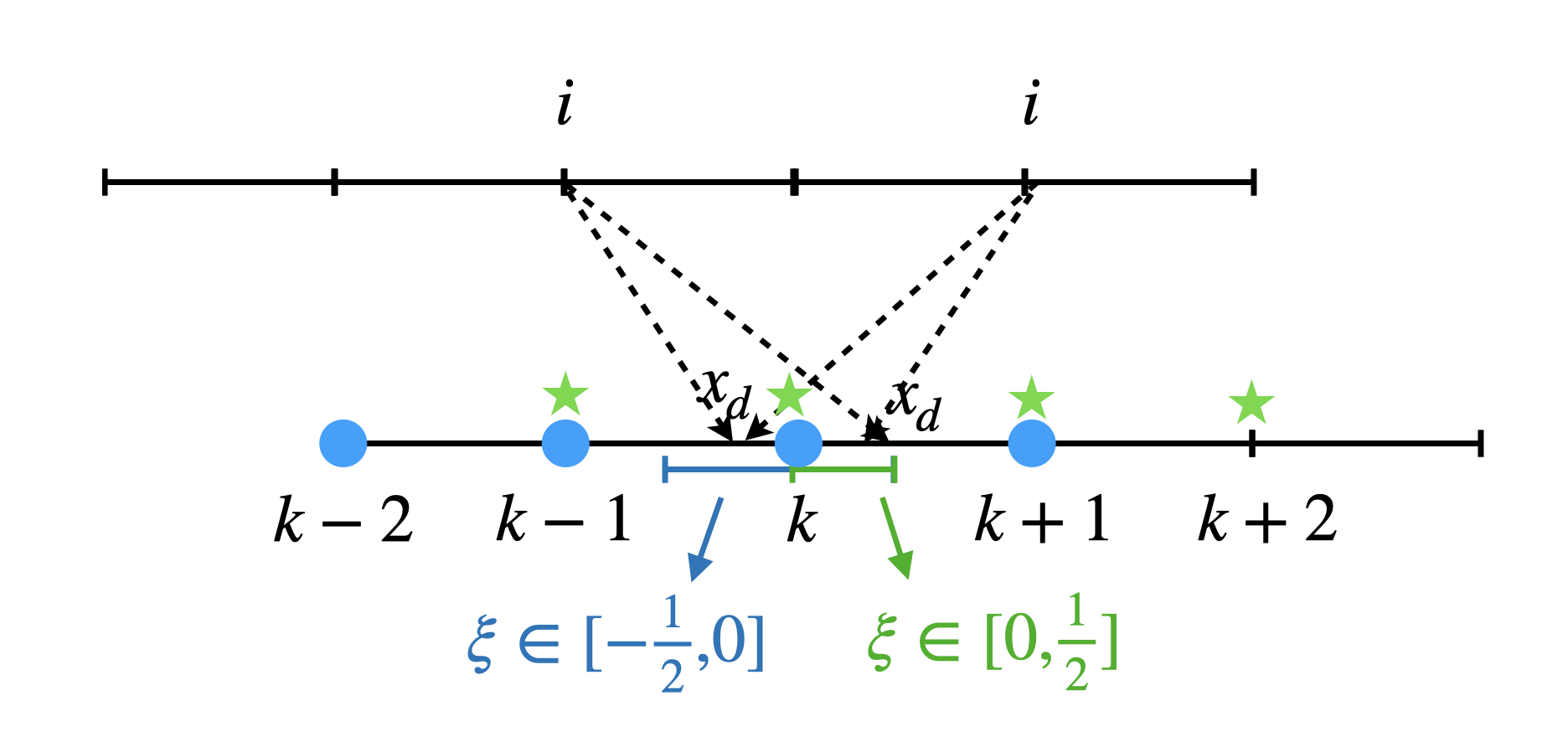}
  \caption{Description of the stencil choices for the Semi-Lagrangian WENO scheme: the first line represents the mesh at the current time level, where the solution is sought at the $i$-th grid point $x_i$; the second line corresponds to the mesh at the previous time level. For positive (negative) velocity $v_j$, the departure point $x_d$ is obtained by tracing backward along the characteristic line (dashed line) from $x_i$ to the left (right). Two different upwind stencil choices for the interpolation for different $\xi$ are indicated using blue circles and green stars, respectively.}
  \label{fig:SLdespcription}
\end{figure}

\subsubsection{Structure-preserving Fourier update}\label{subsec:Fourier}

In this subsection, we discuss our strategy for solving the subproblem~\eqref{eq:split_dbintegral}, with the goal of deriving an analytic update formula, 
upon which a discretization will be applied to obtain the 
numerical scheme. 
The challenge associated with the subproblem ~\eqref{eq:split_dbintegral} is the right hand side nonlocal term, which involves a double integral. To handle this effectively, we employ a Fourier transform in velocity space and derive a set of decoupled ODEs, that are linearized from fixing the field from the previous step of the Strang splitting. In the following, we first derive a closed-form expression in the continuous setting and discuss its inherent symmetry. We then explain how to preserve this structure when applying its discrete counterpart via the fast Fourier transform (FFT). 


Recall that the Fourier transform and its inverse are defined as
\[
{\tilde{f}(\omega):=}
\mathcal{F}\{f(t)\} = \int_{-\infty}^\infty f(t)\, e^{-i \omega t} \, dt, \quad 
\mathcal{F}^{-1}\{\tilde{f}(\omega)\} = \frac{1}{2\pi} \int_{-\infty}^\infty \tilde{f}(\omega)\, e^{i \omega t} \, d\omega.
\]
Applying the Fourier transform in \(v\) to~\eqref{eq:split_dbintegral}, let $k_v$ be the Fourier co we focus on the key term
\[
\mathcal{F}_v\left\{ \int \exp\left(i \frac{v' - v}{H} x'\right) f(x,v',t)\, dv' \right\}
= \mathcal{F}_v\left\{ e^{-i \frac{v}{H} x'} \right\} \cdot \mathcal{F}_v\{f(x,v,t)\} 
= 2\pi\, \delta\left(k_v + \frac{x'}{H}\right) \tilde{f}(x,k_v,t),
\]
{where $\tilde{f}$ is the Fourier transform of the solution $f(x,v,t)$ in the velocity space.}
Substituting this into the equation gives
\[
\frac{\partial \tilde{f}}{\partial t} 
= -\frac{i}{H^2} \int dx'\, \delta\left(k_v + \frac{x'}{H}\right) 
\left[ \Phi\left(x + \frac{x'}{2}\right) - \Phi\left(x - \frac{x'}{2}\right) \right] \tilde{f}(x,k_v,t).
\]
We now introduce the change of variable \(\tilde{x} = k_v + \frac{x'}{H}\), so that \(dx' = H d\tilde{x}\). This yields
\begin{equation}\label{eq:ODE_kv}
\begin{split}
    \frac{\partial \tilde{f}}{\partial t} 
    &= -\frac{i}{H} \int d\tilde{x}\, \delta(\tilde{x}) 
    \left[ \Phi\left(x + \frac{H(\tilde{x} - k_v)}{2} \right) 
         - \Phi\left(x - \frac{H(\tilde{x} - k_v)}{2} \right) \right] \tilde{f}(x,k_v,t) \\
    &= \frac{i}{H} \left[ \Phi\left(x + \frac{H k_v}{2} \right) - \Phi\left(x - \frac{H k_v}{2} \right) \right] \tilde{f}(x,k_v,t),
\end{split}
\end{equation}
where we have used the identity \(\int \delta(x) f(x)\, dx = f(0)\).

Equation~\eqref{eq:ODE_kv} is a linear ODE in time and admits the exact solution over a full time step \(\Delta t\) as
\begin{equation}\label{eq:FFT_updateform}
\tilde{f}(x,k_v,t_0+\Delta t) = \tilde{f}(x,k_v,t_0) 
\exp \left( \frac{i}{H} \left[ \Phi\left(x + \frac{H k_v}{2} \right) - \Phi\left(x - \frac{H k_v}{2} \right) \right] \Delta t \right), 
\end{equation}
where \(\tilde{f}(x,k_v,t_0)\) is taken from the Fourier transform of $f^{*,n+1/2}$ with respect to velocity space, denoted by $\tilde{f}^{*,n+1/2}$.
Equation~\eqref{eq:FFT_updateform} reveals an important structural property: the update involves multiplying the Fourier transform of a real-valued function by a conjugate symmetric complex function,
\begin{equation}\label{eqn:expo_g}
    g^x(k_v) := \exp \left( \frac{i}{H} \left[\Phi\left(x + \frac{H k_v}{2}\right) - \Phi\left(x - \frac{H k_v}{2}\right)\right] \Delta t \right),
\end{equation}
which satisfies
\[
g^x(-k_v) = \overline{g^x(k_v)}, \quad \forall k_v \in \mathbb{R} \text{ or } \mathbb{Z}.
\]
By standard Fourier theory, the transform of any real-valued function is conjugate symmetric, and conversely, the inverse transform of a conjugate symmetric function is guaranteed to be real. As a result, the updated solution should remain real-valued in exact arithmetic. However, preserving this structure in numerical practice requires careful handling. For completeness, we present a definition of the structure-preserving property in the context of this paper. 
\begin{definition}[Structure-Preserving Scheme]\label{def:structure-preserving}
    A numerical scheme is said to be structure-preserving if it retains key analytical or physical properties of the continuous system at the discrete level. 
\end{definition}
In the context of our method, the scheme is structure-preserving in the sense that with input real-valued function ${f}(x,v,t_0) $, the inverse Fourier transform of $\tilde{f}(x,k_v,t_0 + \Delta t)$ in \eqref{eq:FFT_updateform} remains real-valued at each time step. 
Furthermore, in the full-rank scheme, with real-valued phase function $f^{*,n+1/2}$, the updated solution of the full-rank solver $f^{*,n+1}$ from \eqref{eq:split_dbintegral} remains real-valued. We will revisit this structure preserving property when we propose a modified sampling-based adaptive rank algorithm, which will enforce this property. 

Next, we describe our proposed discretization and discuss its structure-preserving property as introduced above. 
    We discretize the velocity space $[-L_v,L_v]$ with $N_v$ points for the discrete Fourier transform (DFT) \cite{boyd2001chebyshev}. 
    Here, we choose \(N_v\) to be even when designing the scheme, since the FFT algorithm is more efficient for vectors whose length is a power of two \cite{knuth2014art}.
    For an $N_v$-point real sequence $f[n]$, the standard formula of DFT is given by:
\begin{equation}
\tilde{f}[k_v] = \sum_{n=0}^{N_v-1} f[n] e^{-i\frac{2\pi}{N_v}k_v n}, \quad 
f[n] = \frac{1}{N_v} \sum_{k_v=0}^{N_v-1} \tilde{f}[k_v] e^{i\frac{2\pi}{N_v}k_v n},
\end{equation}
with discrete frequency indices $k_v\in\{0,...,N_v-1\}$. To preserve the structure-preserving property, the frequency domain of the DFT is shifted to be zero-centered and normalized by \(\frac{2\pi}{2L_v}\),
\begin{equation}\label{eq:frequency space}
K_v = \frac{2\pi}{2L_v} \left\{-\frac{N_v}{2}, -\frac{N_v}{2}+1, \dots, 0, \dots, \frac{N_v}{2}-1 \right\}.
\end{equation}
Let $k_{Nyq} = \frac{2\pi}{2L_v} (-\frac{N_v}{2})$, which is often called the Nyquist frequency.   
For \( f \in \mathbb{R}^{N_v} \) with even \(N_v\), the corresponding DFT coefficients \(\tilde{f}[k_v]\) satisfy the conjugate symmetry property \(\tilde{f}[-k_v] = \overline{\tilde{f}[k_v]}\) for \(k_v \in K_v \setminus \{0, k_{\text{Nyq}}\}\), with both \(\tilde{f}[0]\) and \(\tilde{f}[k_{\text{Nyq}}]\) being real.
The update operator \(g(k_v)\) acts on each frequency mode and preserves conjugate symmetry except at the Nyquist frequency. At \(k_v = k_{\text{Nyq}}\), the phase term 
\begin{equation}
\Phi\left(x + \frac{-H N_v \pi}{4L_v}\right) - \Phi\left(x - \frac{-H N_v \pi}{4L_v}\right)
\label{eq: phi}    
\end{equation}
generally makes \(g(k_v)\) complex, thereby breaking the condition \(\tilde{f}[k_{\text{Nyq}}] \in \mathbb{R}\), and introducing small imaginary components in the inverse DFT. 
To recover a real solution, we set \(\tilde{f}(x, k_v, t_0 + \Delta t) = 0\) at \(k_v = k_{\text{Nyq}}\) before performing the inverse transform. This truncation does not degrade overall accuracy, since the Nyquist mode typically carries negligible energy for smooth solutions. A schematic illustration of this correction is shown in Figure~\ref{fig:Fourierdespcription}.
Note that the potential in \eqref{eq: phi} needs to be evaluated at locations \(x \pm \frac{H k_v}{2}\), which in general do not coincide with the spatial grid. To address this, we apply a WENO reconstruction to interpolate \(\Phi\) at these off-grid locations. 
For a fixed spatial grid point $x_i$, this involves a local reconstruction of \(\Phi\) for each $k_v$, where the interpolation points depend on the displacement \(\frac{H k_v}{2}\).
\begin{figure}[htbp]
  \centering
  \includegraphics[width=0.6\textwidth]{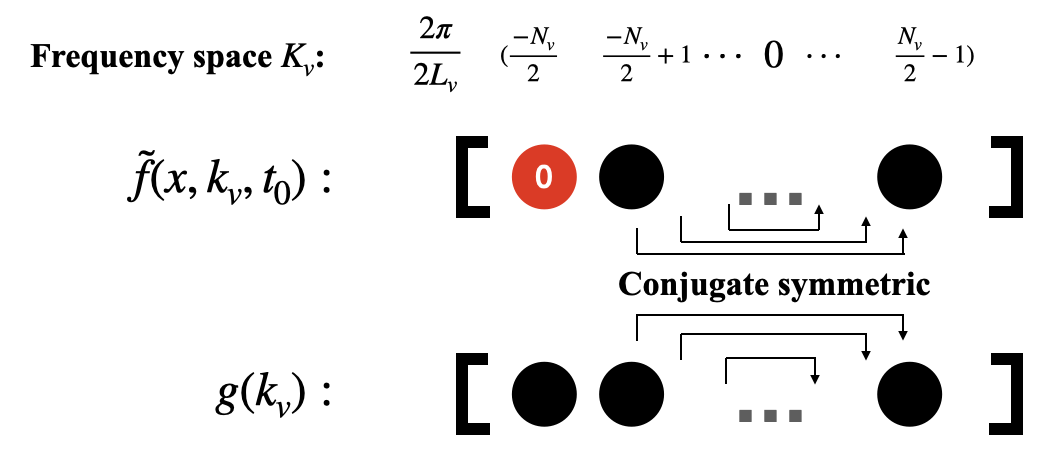}
  \caption{Description of the structure-preserving Fourier update: The first line shows the frequency order (without normalized). The second line shows the FFT structure of a real-valued vector with truncation of the Nyquist frequency, preserving conjugate symmetry for the other parts. The third line shows the structure of the exponential vector.}
  \label{fig:Fourierdespcription}
\end{figure}

\subsubsection{Summary of the Algorithm}\label{sec:full_rank_solver_summary}
We summarize the improved full-rank solver in Algorithm~\ref{alg:fullrank-WP solver}, which incorporates all the components described above except for the final correction step. Although a conservative semi-Lagrangian method is employed for the advection subproblem, the Fourier update may still lead to a loss of mass conservation. To restore mass conservation, we introduce a Step 5, where a Lagrange multiplier is applied to enforce mass preservation. We remark that Arnold and Ringhofer conducted a stability analysis for the original full-rank solver in \cite{arnold1995operator, arnold1996operator}; in particular, their 1996 paper shows that the proposed splitting method—also employed here—is stable and achieves second-order accuracy in time. Our improved full-rank solver retains this stability property, as the modifications introduced in the Fourier update step preserve the overall accuracy and do not compromise the stability of the scheme. 
\begin{algorithm}
\caption{Full rank Wigner-Poisson Solver}
\label{alg:fullrank-WP solver}
\begin{algorithmic}[1]
\Require Perform the spatial discretization \eqref{eqn:spatial_dis} and velocity discretization \eqref{eqn:velocity_dis}. And recall the notations: $f^{l}_{i,j}$ denotes the numerical solution at $(x_i,v_j)$ represented in a matrix form, where $l \in \{(*,n+1/2), (*,n+1),n,n+1\}$ represents different time level and {$\Phi^{n+1/2}_i$ represents the potential approximation at $(x_i,(n+1/2)\Delta t)$.} 
\While{$t < T$} 
    \State \textbf{Step 1: Semi-Lagrangian Method}
    \State Take WENO approximation of half-step update (Section \ref{subsec:SL}) and in particular, for any fixed velocity grid point $v_j$, use the formula \eqref{eq:SL_update} to get $f^{*,n+1/2}_{i,j}$: 
    \begin{equation}\label{eqn:SL_matrix}
        f^{*,n+1/2}_{i,j} = f^n_{i,j} - \xi \left( \hat{f}^n_{i+1/2,j}(\xi) - \hat{f}^n_{i-1/2,j}(\xi) \right).
    \end{equation}
    \State \textbf{Step 2: Poisson Equation Step}
    \State Use trapezoid rule for $f^{*,n+1/2}$ along velocity space to compute the source term. 
    \State Solve the Poisson equation \eqref{eq:poisson} with a fourth-order finite difference method, periodic boundary conditions, and zero-mean density. This leads to $\Phi^{n+1/2}$. 
    \State \textbf{Step 3: Fourier-ODE Step}
    \State Apply FFT to $f^{*,n+1/2}$ along velocity space to get $\tilde{f}^{*,n+1/2}$;
    \State Solve ODE in Fourier space with \eqref{eq:FFT_updateform} and \eqref{eqn:expo_g} and specifically,  
    \begin{equation}\label{eqn:FFT_matrix}
        \tilde{f}^{*,n+1}_{i,j} = \tilde{f}^{*,n+1/2}_{i,j}g^{x_i}(k_v),
    \end{equation}
    where $k_v = \frac{2\pi}{2L_v} (-\frac{N_v}{2}+j+1)$ and the reconstruction of {$\Phi^{n+1/2}$} is used in \eqref{eqn:expo_g}. 
    \State \textbf{Step 4: Second Semi-Lagrangian Step}
    \State Same as Step 1 but we will use $f^{*,n+1}$ instead of $f^n$. This step leads to $f^{n+1}$ without mass-conservation. 
    \State The mass is computed by trapezoid rule along velocity space and simple sums along spatial space: 
    \[
L^{n+1,*} = \sum_{i} \sum_{j}f^{n+1}(i,j) \, \Delta x \, \Delta v.
\]
    \State \textbf{Step 5: Density Correction Step}
    \State Use a Lagrange multiplier: 
    \[
f^{n+1} = \frac{L_x}{L^{n+1,*}}f^{n+1}.
\]
    \State $t \gets t + \Delta t$
\EndWhile
\end{algorithmic}
\end{algorithm}

\subsection{A sampling-based adaptive rank method}\label{subsec:adaptive_rank_method}



To reduce the computational cost, we seek to represent the solution snapshots in a low-rank form at each time step. In the adaptive rank framework, both the SL WENO advection step and the Fourier update step are reformulated as matrix approximation problems, where the solution matrix is compressed using the ACA-SVD strategy introduced in Section~\ref{sec:ACA-SVD}. The ACA stage adaptively samples informative rows and columns, while the subsequent SVD stage further stabilizes the approximation and removes redundant modes. This sampling-based compression procedure serves as the core building block for the adaptive rank Wigner–Poisson solver described in Section~\ref{subsec:NewLR_WP}. In additional, during the Fourier update stage, we incorporate a modified structure-preserving ACA sampling strategy to maintain the real-valued structure of the solution, as discussed in Section~\ref{subsec:structure-preserving}.

\subsubsection{ACA-SVD Compression Strategy}\label{sec:ACA-SVD}

In our Wigner–Poisson solver, each solution snapshot can be naturally viewed as a matrix of values defined on a Cartesian grid. We compress these snapshots using a  Adaptive Cross Approximation (ACA), followed by Singular Value Decomposition (SVD) type truncation. The key motivation for employing ACA is its ability to construct a low-rank approximation by adaptively sampling only a small subset of the matrix entries. This feature is particularly valuable in our setting, as it allows the algorithm to invoke the numerical solver selectively at the required grid locations (rows and columns), without needing to assemble the full matrix. In contrast, a direct SVD would require access to all matrix entries, which is computationally impractical when the entire matrix is not explicitly constructed. After the ACA stage, a SVD type truncation is applied to further eliminate redundancy and enhance numerical stability, yielding the final low-rank representation. A more detailed discussion of such procedure can also be found in~\cite{zheng2025semi}.

The ACA procedure produces a CUR decomposition of the form
\begin{equation}\label{eq:CUR_decom}
A_k = A(:, \mathcal{J})\, A(\mathcal{I}, \mathcal{J})^{-1}\, A(\mathcal{I}, :) =: \mathcal{C}\mathcal{U}\mathcal{R},
\end{equation}
where \(A \in \mathbb{R}^{N_x \times N_v}\) is the matrix to be compressed; \(\mathcal{I} = \{i_1, i_2, \dots, i_k\}\) and \(\mathcal{J} = \{j_1, j_2, \dots, j_k\}\) denote the selected row and column index sets; and \(\mathcal{C}\), \(\mathcal{R}\), and \(\mathcal{U}\) are the column, row, and core matrices defining the approximation. The overall CUR decomposition structure is illustrated on the right side of Figure~\ref{fig:CURdespcription}. While finding the optimal index sets for CUR is NP-hard~\cite{civril2007finding}, ACA empirically constructs \(\mathcal{I}\) and \(\mathcal{J}\) adaptively via a greedy pivoting process driven by residual evaluation.

At each iteration \(k\), starting from an initial approximation \(A_0 = 0\), the ACA algorithm recursively constructs a rank-\(k\) approximation \(A_k\) via a rank-one update; which consists of two stages. Please see Figure~\ref{fig:CURdespcription} for a schematic illustration of the procedure. 
\paragraph{Phase I: Pivot selection.} 
A candidate set of index pairs \(\mathcal{P} = \{(i^*_\ell, j^*_\ell)\}_{\ell=1}^{p}\) (blue dots in Figure~\ref{fig:CURdespcription}) is randomly sampled from unselected rows and columns of the residual matrix from the previous iteration $R_{k-1}=A - A_{k-1}$. The initial pivot is selected as
\[
(i^*, j^*) = \arg\max_{(i,j) \in \mathcal{P}} |R_{k-1}(i,j)|.
\]
This is followed by greedy refinement (dashed green column and solid yellow row in Figure~\ref{fig:CURdespcription}):
\[
i_k = \arg\max_i |R_{k-1}(i,j^*)|, \qquad j_k = \arg\max_j |R_{k-1}(i_k,j)|.
\]
The selected \((i_k, j_k)\) pair is then added to \(\mathcal{I}\) and \(\mathcal{J}\).
\paragraph{Phase II: Rank-one update.}
The approximation is updated via
\begin{equation}\label{eq:recursive_update}
A_k = A_{k-1} + \frac{1}{R_{k-1}(i_k, j_k)}\, R_{k-1}(:, j_k)\, R_{k-1}(i_k, :),
\end{equation}
which is mathematically equivalent to the standard CUR decomposition \eqref{eq:CUR_decom}~\cite{shi2024distributed}. Such construction interpolates the matrix \(A\) at all previously selected rows and columns:
\[
A_k(i_\ell,:) = A(i_\ell,:), \qquad A_k(:,j_\ell) = A(:,j_\ell), \qquad \text{for } \ell = 1,\dots,k.
\]
The iteration terminates when either the norm of the most recent rank-one update falls below a prescribed tolerance \(\varepsilon_c\), or a maximum allowable rank is reached.

The resulting ACA approximation admits a structured factorization:
\[
A_k = \mathcal{E}_{\mathcal{J}} \, \mathcal{D} \, \mathcal{E}_{\mathcal{I}},
\]
where \(\mathcal{E}_{\mathcal{J}} = [\mathbf{c}_1, \dots, \mathbf{c}_k]\) with \(\mathbf{c}_\ell = R_{\ell-1}(:, j_\ell)\), \(\mathcal{E}_{\mathcal{I}} = [\mathbf{r}_1, \dots, \mathbf{r}_k]^{\mathrm{T}}\) with \(\mathbf{r}_\ell = R_{\ell-1}(i_\ell, :)^{\mathrm{T}}\), and \(\mathcal{D} = \mathrm{diag}(r_1^{-1}, \dots, r_k^{-1})\) with \(r_\ell = R_{\ell-1}(i_\ell, j_\ell)\).

\paragraph{SVD truncation.}
To eliminate numerical redundancy and enhance stability, the ACA output is further compressed via an SVD-type truncation step. We first compute QR decompositions:
\[
\mathcal{E}_{\mathcal{J}} = Q_1 R_1, \qquad \mathcal{E}_{\mathcal{I}}^{\mathrm{T}} = Q_2 R_2.
\]
We then compute the SVD of the reduced core:
\[
R_1 \mathcal{D} R_2^{\mathrm{T}} = \widetilde{U} \widetilde{\Sigma} \widetilde{V}^{\mathrm{T}}.
\]
The final approximation is truncated at rank \(r_s\), defined as the smallest integer such that \(\widetilde{\Sigma}(r_s+1,r_s+1) < \varepsilon_s\). The resulting factorization is
\[
A \approx \left( Q_1 \widetilde{U}(:,1\!:\!r_s) \right) \widetilde{\Sigma}(1\!:\!r_s,1\!:\!r_s) \left( Q_2 \widetilde{V}(:,1\!:\!r_s) \right)^{\mathrm{T}} =: U \Sigma V^{\mathrm{T}}.
\]

\begin{figure}[htbp]
  \centering
  \includegraphics[width=0.84\textwidth]{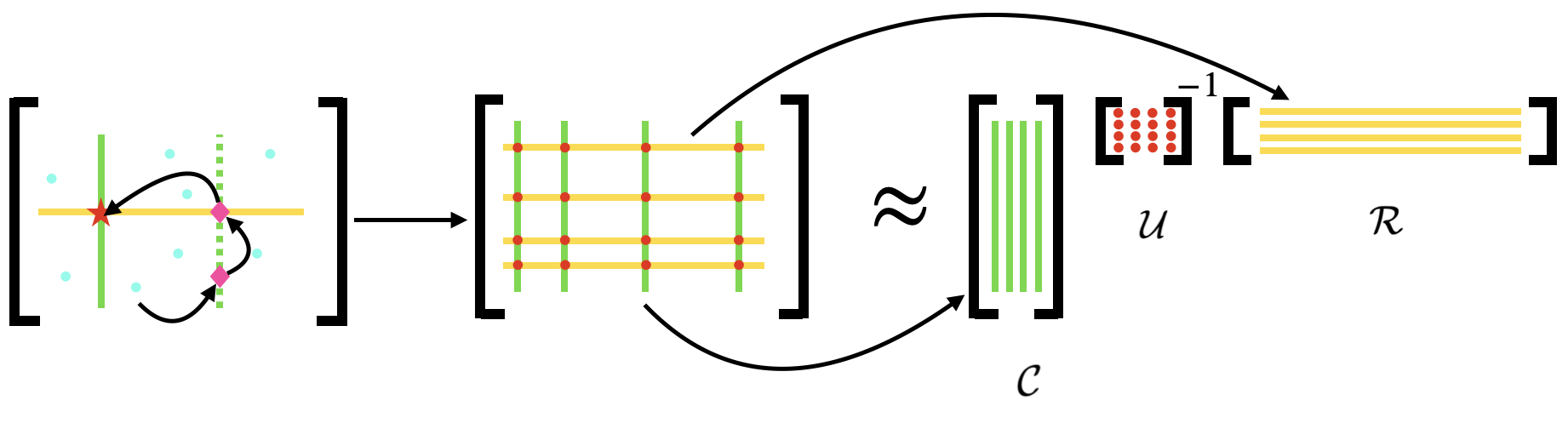}
  \caption{Schematic illustration of ACA construction and CUR decomposition. The left panel illustrates one ACA iteration step: a rank-one update is generated by finding pivot row and column indices based on a greedy procedure to search for maximum residual. The right panel shows the CUR factorization structure assembled by ACA after multiple iterations. Randomly sampled candidate entries are shown as blue dots. The selected columns and rows are indicated by green and yellow lines, respectively. The red dots mark the intersection points of the selected rows and columns, which form the submatrix $A(\mathcal{I},\mathcal{J})$ used to construct the core matrix $\mathcal{U}$ in the CUR decomposition.}
  \label{fig:CURdespcription}
\end{figure}

\begin{remark}
The computational cost of the above ACA-SVD procedure is mainly due to two procedures: (1) adaptive sampling of matrix entries, and (2) the SVD-based truncation. Assume that accessing each matrix entry requires \(\mathcal{O}(1)\),  the total cost of ACA sampling up to final rank \(r\) is \(\mathcal{O}(Nr)\),
where \(N = \max\{N_x, N_v\}\). 
The SVD based truncation incurs \(\mathcal{O}(Nr^2 + r^3)\) complexity, accounting for two QR factorizations of size \(N \times r\) and one SVD on the reduced \(r \times r\) core. These lead to a total computational complexity that scales linearly with \(N\), provided that the numerical rank \(r\) remains moderate and is independent of $N$.
\end{remark}

\begin{remark}
The initial sample size \(p\) used in {Phase I} of ACA is a user-defined parameter. Larger values of \(p\) typically improve the robustness and accuracy of the ACA procedure. In our implementation, we set \(p = 12\). The refinement strategy also differs from the classical ACA formulation in~\cite[Algorithm 2]{dolgov2020parallel}: we do not impose the rook pivoting condition, which would require selecting the global maximum within both the corresponding row and column. Such a condition is computationally prohibitive in our setting, as accessing individual matrix entries requires invoking local numerical PDE solvers. In practice, we find that searching a pivot along a column, followed by a row yields stable and accurate results.
\end{remark}

\subsubsection{New adaptive-rank Wigner–Poisson solver}
\label{subsec:NewLR_WP}

We now integrate all previously introduced components to construct the adaptive-rank Wigner–Poisson solver, which is presented in Algorithm~\ref{alg:LR-WP solver}. 
The method is built upon the full-rank solver presented in Section~\ref{subsec:SSmethod} (Algorithm~\ref{alg:fullrank-WP solver}), but reformulates each substep into an adaptive-rank framework by applying the ACA-SVD compression strategy introduced in Section~\ref{sec:ACA-SVD}. The numerical solution is represented in SVD form $\mathbf{F}^l = U^l \Sigma^l (V^l)^\mathrm{T}$ for $l \in \{(*,n+1/2), (*,n+1),n,n+1\}$, where $\Sigma^l$ is diagonal. 
In addition, a structure-preserving modification is incorporated into the ACA stage of the Fourier update to ensure real-valued solutions, \Cref{subsec:structure-preserving}. 

Below we highlight essential components of Algorithm~\ref{alg:LR-WP solver}:
\begin{enumerate}
    \item \textbf{Step 1} combines the full rank SL WENO scheme with the ACA-SVD compression. The function handle to access the matrix entry is defined by evaluating the WENO update formula~\eqref{eqn:SL_matrix} at the corresponding characteristic trace back point. The ACA-SVD algorithm described in Section~\ref{sec:ACA-SVD} is applied to obtain $\mathbf{F}^{*,n+1/2} = U^{*,n+1/2} \Sigma^{*,n+1/2} (V^{*,n+1/2})^\mathrm{T}$ in a low rank decomposition format. 

    \item \textbf{Step 2} solves the Poisson equation using the same method as in Algorithm~\ref{alg:fullrank-WP solver}, but with the source term computed with low rank complexity from the SVD representation $\mathbf{F}^{*,n+1/2}$.

    \item \textbf{Step 3} performs the Fourier update under adaptive rank framework. First, the FFT is applied column-wise to the right singular factor $V^{*,n+1/2}$, yielding $\tilde{\mathbf{F}}^{*,n+1/2} = U^{*,n+1/2} \Sigma^{*,n+1/2} \left( \mathcal{F}(V^{*,n+1/2}) \right)^{\mathrm{T}}$.
    The function handle to access the matrix is defined using  ~\eqref{eqn:FFT_matrix}.
    Then the ACA-SVD algorithm is applied to obtain the Fourier update $\tilde{\mathbf{F}}^{*,n+1} = U^{*,n+1} \Sigma^{*,n+1} \left(\tilde{V}^{*,n+1} \right)^{\mathrm{T}}$.
    To preserve conjugate symmetry, the procedure of the ACA column selection is modified: for each selected column index $j$, its symmetric counterpart
    \begin{equation}\label{eq:symmetric_column_pair}
    j_{\text{opp}} = \mod((N_v + 1 - j), N_v) + 1    
    \end{equation}
    is also included. This guarantees that the resulting CUR approximation respects conjugate symmetry, as proven in Theorem~\ref{thm:structure-preserving}. 
    After we construct $\tilde{\mathbf{F}}^{*,n+1}$, the inverse FFT is applied column-wise to the right singular factor $\tilde{V}^{*,n+1}$, giving the  
    real-valued representation $\mathbf{F}^{*,n+1}$ (up to machine precision):
    \[
    \mathbf{F}^{*,n+1} = U^{*,n+1} \Sigma^{*,n+1} \left( \mathcal{F}^{-1}(\tilde{V}^{*,n+1}) \right)^{\mathrm{T}}.
    \]
    \item \textbf{Step 4} applies another ACA-SVD semi-Lagrangian update, starting from $\mathbf{F}^{*,n+1}$ to compute $\mathbf{F}^{n+1} = U^{n+1} \Sigma^{n+1} (V^{n+1})^{\mathrm{T}}$. The total mass is directly computed from its SVD factors:
    \begin{equation}\label{eq:total_mass_low_rank}
    L^{n+1,*} = \sum_{k=1}^{r} \Sigma(k,k) \left( \sum_{i=1}^{N_x} U(i,k) \Delta x \right) \left( \sum_{j=1}^{N_v} V(j,k) \Delta v \right).
    \end{equation}
    
    \item \textbf{Step 5} applies a Lagrange multiplier correction to enforce mass conservation:
    \[
    \mathbf{F}^{n+1,C} = U^{n+1} \left( \Sigma^{n+1} \cdot \frac{L_x}{L^{n+1,*}} \right) (V^{n+1})^\mathrm{T}.
    \]
\end{enumerate}

\begin{algorithm}
\caption{Algorithm: Adaptive-Rank Wigner–Poisson Solver with ACA-SVD Compression}
\label{alg:LR-WP solver}
\begin{algorithmic}[1]
\Require Spatial discretization \eqref{eqn:spatial_dis}, velocity discretization \eqref{eqn:velocity_dis}, and initial condition represented as SVD form $\mathbf{F}^{0,C} = U^0 \Sigma^0 (V^0)^\mathrm{T}$ (default SVD tolerance $\varepsilon_s=10^{-3}$). {$\Phi^{n+1/2}_i$ represents the potential approximation at $(x_i,(n+1/2)\Delta t)$.} 
\While{$t < T$}
    \State \textbf{Step 1: Semi-Lagrangian with ACA-SVD.} Use~\eqref{eqn:SL_matrix} as function handle; apply ACA-SVD to obtain $\mathbf{F}^{*,n+1/2}$.
    \State \textbf{Step 2: Solve $\Phi^{n+1/2}$ from the Poisson.} Compute density from $\mathbf{F}^{*,n+1/2}$ by summation; solve Poisson equation to obtain $\Phi^{n+1/2}$. 
    \State \textbf{Step 3: Fourier Update with Modified ACA-SVD.} 
    \State Apply FFT column-wise to $V^{*,n+1/2}$ to obtain $\tilde{\mathbf{F}}^{*,n+1/2}$. 
    \State Apply ACA-SVD to \eqref{eqn:FFT_matrix} with symmetric column pairing \eqref{eq:symmetric_column_pair} to obtain $\tilde{\mathbf{F}}^{*,n+1} = U^{*,n+1} \Sigma^{*,n+1} \left(\tilde{V}^{*,n+1} \right)^{\mathrm{T}}$.
    \State {Apply inverse FFT column-wise to the right singular factor $\tilde{V}^{*,n+1}$ to recover real-valued $\mathbf{F}^{*,n+1}$.}
    \State \textbf{Step 4: Second Semi-Lagrangian with ACA-SVD.} 
    \State Repeat Step 1 starting from $\mathbf{F}^{*,n+1}$ to obtain $\mathbf{F}^{n+1} = U^{n+1} \Sigma^{n+1} (V^{n+1})^\mathrm{T}$.
    \State Compute total mass $L^{n+1,*}$ of  $\mathbf{F}^{n+1}$ as defined in~\eqref{eq:total_mass_low_rank}.

    \State \textbf{Step 5: Density Correction.} Apply Lagrange correction:
    \[
    \mathbf{F}^{n+1,C} = U^{n+1} \left( \Sigma^{n+1} \cdot \frac{L_x}{L^{n+1,*}} \right) (V^{n+1})^\mathrm{T}.
    \]
    \State $t \gets t + \Delta t$
\EndWhile
\end{algorithmic}
\end{algorithm}

Goreinov and Tyrtyshnikov showed in their 2001 paper~\cite{goreinov2001maximal} that the CUR approximation converges when the “best” rows and columns are selected, as defined in the assumption of Theorem~\ref{thm:ACA}. 

{\begin{theorem}\label{thm:ACA}
For a CUR decomposition \( \mathcal{C}\mathcal{U}\mathcal{R} \) of a matrix \( A \), if \( \mathcal{U}^{-1} \) has the largest volume among all submatrices (i.e., has the largest absolute determinant), then
\[
\|A - \mathcal{C}\mathcal{U}\mathcal{R}\|_{\max} \le (k+1) \, \sigma_{k+1}(A),
\]
where \( \sigma_{k+1}(A) \) is the \((k{+}1)\)-th largest singular value of \( A \), and \( \| \cdot \|_{\max} \) the entrywise maximum norm.
\end{theorem}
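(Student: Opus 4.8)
I read the hypothesis as: the selected $k\times k$ submatrix $\mathcal{U}=A(\mathcal{I},\mathcal{J})$ is nonsingular and of \emph{maximal volume}, i.e.\ $|\det A(\mathcal{I},\mathcal{J})|\ge|\det A(\mathcal{I}',\mathcal{J}')|$ for every pair of $k$-element index sets $\mathcal{I}',\mathcal{J}'$. The argument is the one of Goreinov and Tyrtyshnikov~\cite{goreinov2001maximal}, and I would organize it as follows.

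\emph{Step 1 (localize the error).} Since $\mathcal{C}=A(:,\mathcal{J})$, $\mathcal{R}=A(\mathcal{I},:)$ and $\mathcal{U}=A(\mathcal{I},\mathcal{J})$, the product $\mathcal{C}\mathcal{U}\mathcal{R}$ reproduces $A$ exactly on the rows $\mathcal{I}$ (its $\mathcal{I}$-rows are $\mathcal{U}\mathcal{U}^{-1}\mathcal{R}=A(\mathcal{I},:)$) and on the columns $\mathcal{J}$. Hence, after permuting $\mathcal{I},\mathcal{J}$ to the front, the error $E:=A-\mathcal{C}\mathcal{U}\mathcal{R}$ is supported on the complementary block $\bar{\mathcal{I}}\times\bar{\mathcal{J}}$, where a direct computation gives the Schur complement
\[
E(\bar{\mathcal{I}},\bar{\mathcal{J}}) \;=\; S \;:=\; A(\bar{\mathcal{I}},\bar{\mathcal{J}}) - A(\bar{\mathcal{I}},\mathcal{J})\,\mathcal{U}^{-1}\,A(\mathcal{I},\bar{\mathcal{J}}).
\]
So it suffices to bound $|S_{pq}|$ for each $p\notin\mathcal{I}$, $q\notin\mathcal{J}$. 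Fixing such $p,q$, form the $(k{+}1)\times(k{+}1)$ submatrix $\hat B:=A(\mathcal{I}\cup\{p\},\mathcal{J}\cup\{q\})$. Putting the extra row and column last and applying the block determinant formula yields the identity
\[
S_{pq} \;=\; \frac{\det\hat B}{\det\mathcal{U}},
\]
with $S_{pq}=0$ whenever $\hat B$ is singular, so we may assume $\hat B$ invertible.

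\emph{Step 2 (the maximal-volume lower bound and assembly).} Because $\mathcal{U}$ is a maximal-volume $k\times k$ submatrix of $A$, it is a fortiori a maximal-volume $k\times k$ submatrix of $\hat B$; since the entries of $\operatorname{adj}(\hat B)$ are, up to sign, precisely the $k\times k$ minors of $\hat B$, this gives $\|\operatorname{adj}(\hat B)\|_{\max}\le|\det\mathcal{U}|$. On the other hand, from $\operatorname{adj}(\hat B)=\det(\hat B)\,\hat B^{-1}$ and the SVD of $\hat B$, the singular values of $\operatorname{adj}(\hat B)$ are $\{\prod_{j\ne i}\sigma_j(\hat B)\}_{i=1}^{k+1}$, whose largest is $\prod_{j=1}^k\sigma_j(\hat B)$; combined with the elementary bound $\|M\|_2\le(k{+}1)\|M\|_{\max}$ for $(k{+}1)\times(k{+}1)$ matrices $M$, this yields $|\det\mathcal{U}|\ge\frac{1}{k+1}\prod_{j=1}^k\sigma_j(\hat B)$. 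Plugging into the identity of Step~1 together with $|\det\hat B|=\prod_{j=1}^{k+1}\sigma_j(\hat B)$,
\[
|S_{pq}| \;=\; \frac{|\det\hat B|}{|\det\mathcal{U}|} \;\le\; \frac{\prod_{j=1}^{k+1}\sigma_j(\hat B)}{\tfrac{1}{k+1}\prod_{j=1}^{k}\sigma_j(\hat B)} \;=\; (k+1)\,\sigma_{k+1}(\hat B) \;\le\; (k+1)\,\sigma_{k+1}(A),
\]
the last step being singular-value interlacing for submatrices ($\sigma_j(\hat B)\le\sigma_j(A)$ since $\hat B$ is obtained from $A$ by deleting rows and columns). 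Taking the maximum over all $(p,q)$ and recalling that $E$ vanishes on $\mathcal{I}$ and $\mathcal{J}$ gives $\|A-\mathcal{C}\mathcal{U}\mathcal{R}\|_{\max}\le(k+1)\sigma_{k+1}(A)$.

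The routine parts (the interpolation property in Step~1, the Schur-complement identity, the Frobenius/interlacing inequalities) are standard. The real heart of the argument, and the step I expect to be the main obstacle, is converting the purely combinatorial "maximal volume" hypothesis into the quantitative estimate $|\det\mathcal{U}|\ge\frac{1}{k+1}\prod_{j\le k}\sigma_j(\hat B)$; the adjugate identity is the trick that makes this clean, since it simultaneously encodes all $k\times k$ minors of $\hat B$ and exposes their singular values. One must also dispose carefully of the degenerate cases ($\hat B$ or $\mathcal{U}$ singular, where the relevant errors vanish) and state precisely the interlacing bound used in the final estimate.
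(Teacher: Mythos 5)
Your proof is correct. The paper does not prove this theorem itself but cites Goreinov and Tyrtyshnikov (2001) for it, and your argument — localizing the error to the Schur complement, expressing each residual entry as a ratio of determinants $\det\hat B/\det\mathcal{U}$, and converting the maximal-volume hypothesis into the bound $|\det\mathcal{U}|\ge\frac{1}{k+1}\prod_{j\le k}\sigma_j(\hat B)$ via the adjugate — is precisely the argument of that cited reference, so no comparison beyond a minor notational remark is needed (the paper writes the maximal-volume submatrix as $\mathcal{U}^{-1}=A(\mathcal{I},\mathcal{J})$, whereas you relabel it $\mathcal{U}$; the mathematics is unaffected).
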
}
{However, the assumptions of Theorem \ref{thm:ACA} do not always hold with the proposed ACA algorithm with greedy search along columns and rows.} Nonetheless, we find the approximation to be effective and robust in practice when solving the Wigner-Poisson system, with a significantly reduced computational complexity compared with the corresponding full rank solver. We will defer a more detailed error analysis to future work, where we aim to understand why ACA, when paired with SVD, performs robustly in this class of nonlocal integral-differential equations.

\subsubsection{Structure-preserving property}\label{subsec:structure-preserving}
In Algorithm~\ref{alg:LR-WP solver}, we introduced a {modified} ACA procedure for the Fourier update step, to ensure the preservation of column-wise conjugate symmetry. In this subsection, we rigorously establish {such structure preservation}, formalized in the following theorem.
\begin{theorem}\label{thm:structure-preserving}
Let $A \in \mathbb{C}^{N_x \times N_v}$ be a special column-wise conjugate symmetric matrix satisfying
\[
A(:,j) = \overline{A(:,j_{opp})} \quad \text{for all } j = 1, \ldots, N_v,
\]
with $j_{opp}$ defined in \eqref{eq:symmetric_column_pair}. 
Then, the modified CUR decomposition of $A$ denoted by $\tilde{A}$ retains the same kind of column-wise conjugate symmetry.
\end{theorem}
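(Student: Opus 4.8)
The plan is to argue entirely at the level of the CUR factorization $\tilde A=\mathcal{C}\mathcal{U}\mathcal{R}$ of \eqref{eq:CUR_decom}, rather than tracking the ACA residual updates \eqref{eq:recursive_update}; this is legitimate because the excerpt already notes the two representations coincide. First I would recast the hypothesis as one matrix identity: letting $P\in\mathbb{R}^{N_v\times N_v}$ be the permutation matrix of the map $j\mapsto j_{opp}$ from \eqref{eq:symmetric_column_pair} (which is an involution on $\{1,\dots,N_v\}$, as is immediate from the formula), the column-wise conjugate symmetry of $A$ reads exactly $\overline A=AP$. Next I would record what the modified ACA guarantees on the index sets: since whenever a column index $j_\ell$ is admitted its partner $(j_\ell)_{opp}$ is admitted as well (self-paired indices being admitted once), the final column set $\mathcal{J}=\{j_1,\dots,j_k\}$ is closed under $j\mapsto j_{opp}$. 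Hence there is an involutive permutation $\pi$ of $\{1,\dots,k\}$ with $(j_\ell)_{opp}=j_{\pi(\ell)}$; writing $Q$ for the associated $k\times k$ permutation matrix, one has $Q^2=I$. No structure is needed of the row set $\mathcal{I}$.

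The computation itself is then short. Using only $\overline A=AP$ and the closure of $\mathcal{J}$, one checks column by column that
\[
\overline{\mathcal{C}}=\overline{A(:,\mathcal{J})}=\mathcal{C}\,Q,\qquad
\overline{\mathcal{R}}=\overline{A(\mathcal{I},:)}=\mathcal{R}\,P,\qquad
\overline{A(\mathcal{I},\mathcal{J})}=A(\mathcal{I},\mathcal{J})\,Q,
\]
and the last identity, together with the commutation of conjugation and inversion, gives $\overline{\mathcal{U}}=\bigl(A(\mathcal{I},\mathcal{J})Q\bigr)^{-1}=Q\,\mathcal{U}$ (using $Q^{-1}=Q$). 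Multiplying the three factors and using $Q^2=I$,
\[
\overline{\tilde A}=\overline{\mathcal{C}}\;\overline{\mathcal{U}}\;\overline{\mathcal{R}}=(\mathcal{C}Q)(Q\mathcal{U})(\mathcal{R}P)=\mathcal{C}\mathcal{U}\mathcal{R}\,P=\tilde A\,P,
\]
i.e. $\overline{\tilde A}(:,j)=\tilde A(:,j_{opp})$, equivalently $\tilde A(:,j)=\overline{\tilde A(:,j_{opp})}$ for all $j$, which is exactly the claimed column-wise conjugate symmetry.

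The only delicate point is the bookkeeping of $\pi$ and $Q$: one must verify that the same permutation $Q$ enters both $\overline{\mathcal{C}}=\mathcal{C}Q$ and $\overline{A(\mathcal{I},\mathcal{J})}=A(\mathcal{I},\mathcal{J})Q$, and that the modified ACA truly yields a $\mathcal{J}$ closed under the involution in every case, including when $j_{opp}$ has already been selected or when $j=j_{opp}$. It is also worth noting that $\overline{\tilde A}=\tilde A P$ survives verbatim if the square core $A(\mathcal{I},\mathcal{J})$ is replaced by a rectangular one with $\mathcal{U}$ a Moore--Penrose pseudoinverse, since $Q$ is orthogonal and $(\,\cdot\,Q)^{\dagger}=Q^{-1}(\cdot)^{\dagger}$. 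An alternative induction on the rank-one updates \eqref{eq:recursive_update}, propagating conjugate symmetry of the residuals, is also possible but less clean, because the intermediate residuals regain symmetry only once each symmetric column pair is completed.
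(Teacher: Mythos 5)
Your proposal is correct and follows essentially the same route as the paper: both arguments show that complex conjugation acts on each CUR factor by a permutation induced by the involution $j\mapsto j_{opp}$ (your $Q$ is the paper's reversal matrix $J$, made explicit there by ordering $\mathcal{J}$ increasingly), deduce $\overline{\mathcal{U}}=Q\,\mathcal{U}$ from the symmetry of $A(\mathcal{I},\mathcal{J})$, and let the permutations cancel in the product $\mathcal{C}\mathcal{U}\mathcal{R}$. Your matrix-level phrasing $\overline{\tilde A}=\tilde A P$ is a cleaner packaging of the paper's entrywise double-sum verification, but the underlying argument is identical.
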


\begin{proof}
{Recall ACA-SVD scheme in Section \ref{sec:ACA-SVD}.} Assume that the index sets $\mathcal{I}$ and $\mathcal{J}$ are still used to represent the selected the rows and columns, respectively, and $\mathcal{J}$ includes each index along with its conjugate symmetric counterpart, i.e. $\mathcal{J} = \mathcal{J}_{opp}$ as we do in the Algorithm \ref{alg:LR-WP solver}. Without loss of generality, assume the indices in $\mathcal{J}$ are ordered increasingly, and reorder $\mathcal{I}$ accordingly. {Let $\tilde{A} = \mathcal{C}\mathcal{U}\mathcal{R}$ be the form obtained from the modified CUR decomposition of $A$. }, where
\[
\mathcal{C} \in \mathbb{C}^{N \times r}, \quad \mathcal{U} \in \mathbb{C}^{r \times r}, \quad \mathcal{R} \in \mathbb{C}^{r \times N},
\]
{and $\mathcal{U}^{-1}$ is the submatrix $A(\mathcal{I},\mathcal{J})$.}

With this ordering and symmetric selection, the matrices $\mathcal{C}$, $\mathcal{U}$, and $\mathcal{R}$ satisfy:
\begin{equation} \label{eq:symmetry}
\mathcal{C}(:,k) = \overline{\mathcal{C}(:,k_{opp,r})}, \quad \mathcal{U}^{-1}(:,k) = \overline{\mathcal{U}^{-1}(:,k_{opp,r})}, \quad \mathcal{R}(:,j) = \overline{\mathcal{R}(:,j_{opp})},
\end{equation}
where 
\begin{equation*}
k_{opp,r} = 
    \begin{cases}
        \mod(r+1-k,r)+1, \, \text{if } 1 \in \mathcal{J} \\
        \quad r+1 - k, \textbf{else}.
    \end{cases}
\end{equation*}
Define the reversal matrix $J \in \mathbb{C}^{r \times r}$ by 
$J(i,k) = \delta_{i,k_{opp,r}}$ which is form of
\[
J = 
\begin{bmatrix}
0 & 0 & \cdots & 0 & 1 \\
0 & 0 & \cdots & 1 & 0 \\
\vdots & \vdots & \reflectbox{$\ddots$} & \vdots & \vdots \\
0 & 1 & \cdots & 0 & 0 \\
1 & 0 & \cdots & 0 & 0
\end{bmatrix} \, \text{if } 1 \in \mathcal{J}, \, \text{or else }
J = 
\begin{bmatrix}
1 & 0 & \cdots & 0 & 0 \\
0 & 0 & \cdots & 0 & 1 \\
0 & 0 & \cdots & 1 & 0 \\
\vdots & \vdots & \reflectbox{$\ddots$} & \vdots & \vdots \\
0 & 1 & \cdots & 0 & 0 
\end{bmatrix},
\]
and both cases, we have the identity $J = J^{-1} = J^{\mathrm{T}}$. 
Then, from the symmetry of $\mathcal{U}^{-1}$, we can write $\mathcal{U}^{-1} = \overline{\mathcal{U}^{-1}} J$. Using the identity $J = J^{-1} = J^{\mathrm{T}}$, it follows that
\[
\mathcal{U} = (\overline{\mathcal{U}^{-1}} J)^{-1} = J^{-1} (\overline{\mathcal{U}^{-1}})^{-1} = J (\overline{\mathcal{U}}),
\]
which implies for all $k$,
\[
\mathcal{U}(k,:) = \overline{\mathcal{U}(k_{opp,r},:)}.
\]

We now verify the conjugate symmetry of $\tilde{A} = \mathcal{C}\mathcal{U}\mathcal{R}$: $\forall \, i,j \in \{1,...,N\}$, 
\begin{align*}
\tilde{A}(i,j) &= \sum_{k=1}^r \sum_{\ell=1}^r \mathcal{C}(i,\ell)\, \mathcal{U}(\ell,k)\, \mathcal{R}(k,j) = \sum_{k=1}^r \sum_{\ell=1}^r \overline{\mathcal{C}(i, \ell_{opp,r})}\, \overline{\mathcal{U}(\ell_{opp,r}, k)}\, \overline{\mathcal{R}(k, j_{opp})} \\
&= \overline{ \sum_{k=1}^r \sum_{\ell_{opp,r}=1}^r \mathcal{C}(i, \ell_{opp,r})\, \mathcal{U}(\ell_{opp,r}, k)\, \mathcal{R}(k, j_{opp})} = \overline{ \sum_{k=1}^r \sum_{\ell=1}^r \mathcal{C}(i, \ell)\, \mathcal{U}(\ell, k)\, \mathcal{R}(k, j_{opp})} = \overline{\tilde{A}(i, j_{opp})}.
\end{align*}

Therefore, $\tilde{A}$ satisfies the same column-wise conjugate symmetry as $A$, which shows that the modified CUR decomposition preserves this structure.
\end{proof}

\section{Numerical simulations}\label{sec:results}
In this section, we present two numerical tests to {showcase} our new adaptive-rank Wigner–Poisson solver: two-stream instability and strong Landau damping. Unless otherwise specified, we use the same tolerance settings of $\varepsilon_c = 10^{-4}$, $\varepsilon_s = 10^{-3}$ as in \cite{zheng2025semi}, with no maximum rank limitation.

\subsection{Two-Stream Instability}
We test the two-stream instability problem using the same setup as in equation \eqref{eqn:ini_TSI} for the full-rank solver discussed in Section \ref{subsec:rank analysis}, but now with the proposed adaptive-rank Wigner–Poisson solver. In Figure \ref{fig:TSI_lowrank_H_T45}, we present results for four different values of the quantum parameter $H = 0.1, 0.5, 1, 8$, using the same numerical settings as in the full-rank simulations: $N_x = N_v = 512$, CFL = 50, and final time $T = 45$. As shown in Figure \ref{fig:TSI_lowrank_H_T45}, the results for $H = 1, 8$ are visually indistinguishable from the corresponding full-rank solutions in Figures \ref{fig:fullrank_T45_1} and \ref{fig:fullrank_T45_2}. For $H = 0.1, 0.5$, the low rank solutions {obtained by the new adaptive-rank Wigner–Poisson solver} {well} capture the main structure of the full-rank results. As shown in Figure \ref{fig:TSI_mass_momentum_H}, mass is conserved up to machine precision, and momentum is conserved within $10^{-4}$. Note that this is capped at {the truncation error of the scheme} over the 50 plasma periods we compute. 
To verify the structure-preserving property, we present Figure \ref{fig:TSIimaginary}. As shown in the figure, the integral of the imaginary part after the Fourier update step is zero up to machine precision, which is consistent with theoretical expectations. {In addition, we perform a convergence test to verify the temporal accuracy of the proposed solver. As shown in Figure \ref{fig:TSI_convergence}, the log-log plots of the $L^2-$error versus time step size $\Delta t$ for different values of $H = 0.5, 1, 8$ exhibit slopes consistent with second-order accuracy before the error reaches the level of $10^{-3}$, as indicated by the reference line of slope 2. Once the error approaches $10^{-3}$, an order reduction is observed, which is attributed to the prescribed tolerance settings ($\varepsilon_c$ = $10^{-4}$ and $\varepsilon_s$ = $10^{-3}$). The simulations are performed up to final time $T = 50$ using a mesh of $512 \times 512$ grid points.}

\begin{figure}[htbp]
\centering
\begin{subfigure}{0.49\textwidth}
\centering
\includegraphics[width=\linewidth]{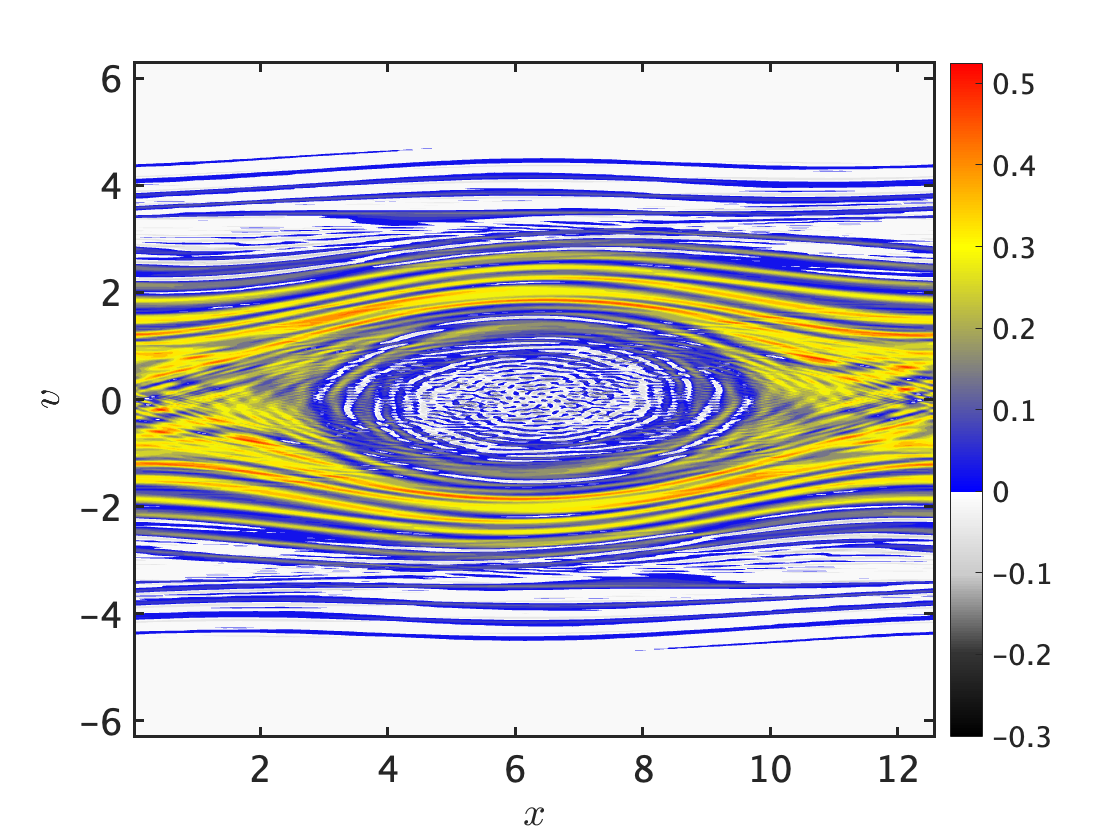}
\caption{$H = 0.1$}
\label{fig:TSI_lowrank_H01_T45}
\end{subfigure}
\begin{subfigure}{0.49\textwidth}
\centering
\includegraphics[width=\linewidth]{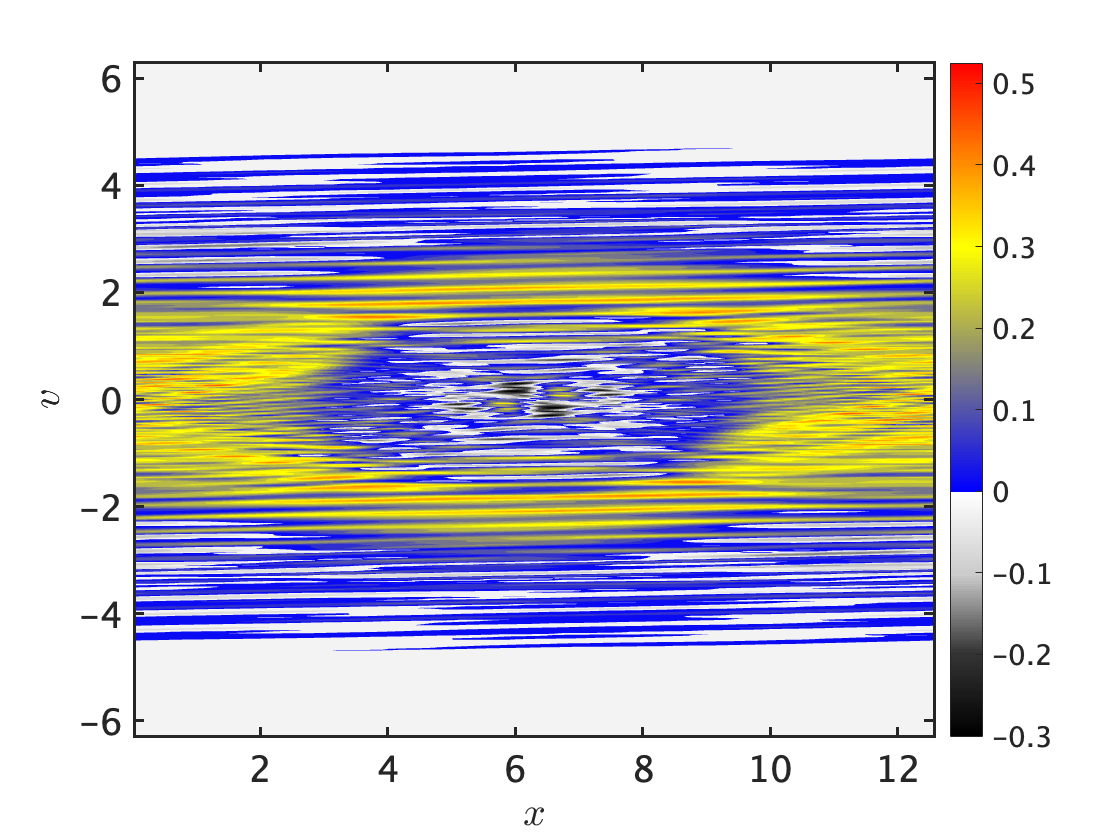}
\caption{$H = 0.5$}
\label{fig:TSI_lowrank_H05_T45}
\end{subfigure}
\begin{subfigure}{0.49\textwidth}
\centering
\includegraphics[width=\linewidth]{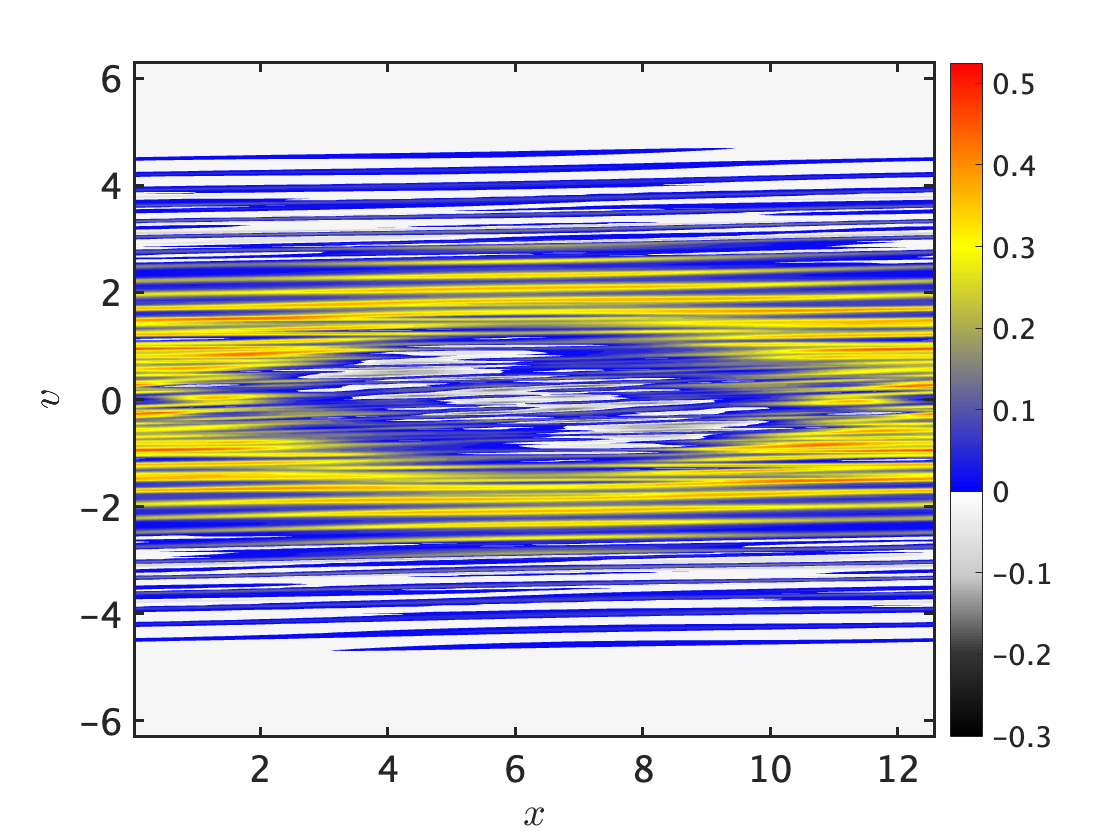}
\caption{$H = 1$}
\label{fig:TSI_lowrank_H1_T45}
\end{subfigure}
\begin{subfigure}{0.49\textwidth}
\centering
\includegraphics[width=\linewidth]{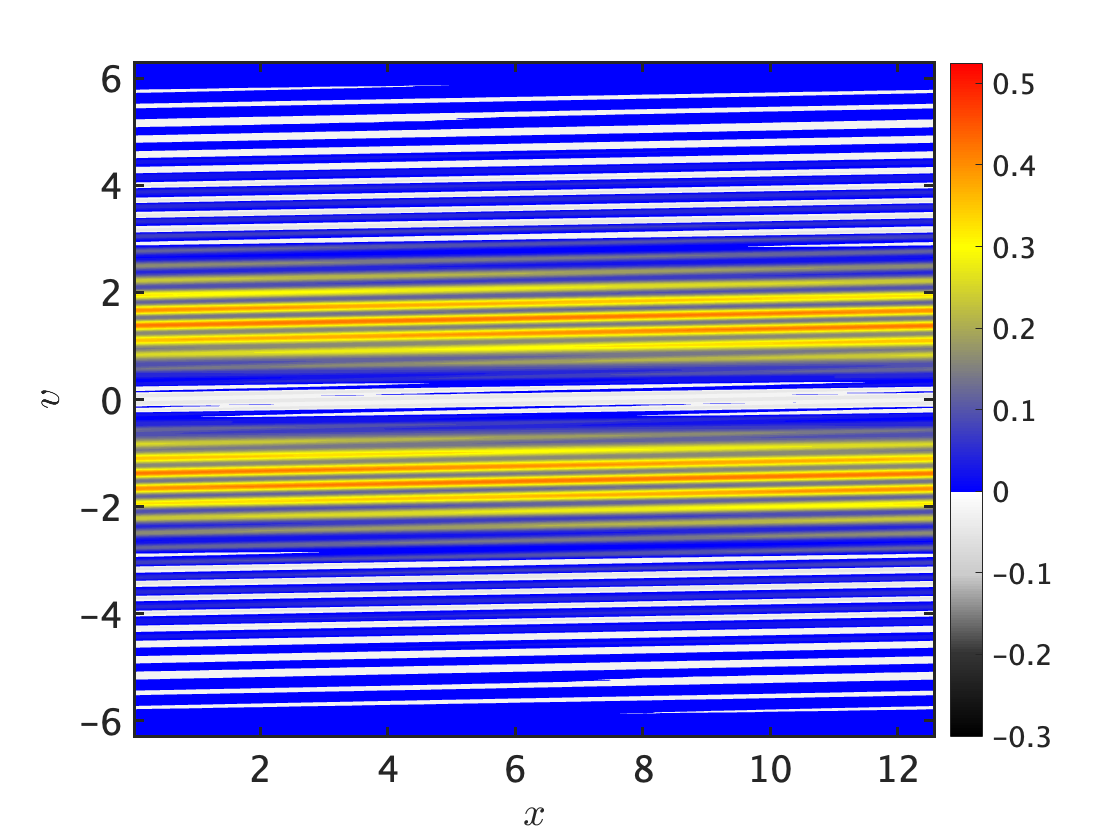}
\caption{$H = 8$}
\label{fig:TSI_lowrank_H8_T45}
\end{subfigure}   
\caption{Phase space solutions of Wigner–Poisson (two-stream instability): This figure shows the phase space solutions of the {low rank} simulations for different $H$ at $T = 45$. Compared with the full-rank simulations, the adaptive rank solver can accurately solve the equations, especially for $H \ge 1$. By increasing the mesh size and solver tolerance, more details can be captured, especially when $H$ is close to zero.}
\label{fig:TSI_lowrank_H_T45}
\end{figure}

\begin{figure}[htbp]
\centering
\begin{subfigure}{0.49\textwidth}
\centering
\includegraphics[width=\textwidth]{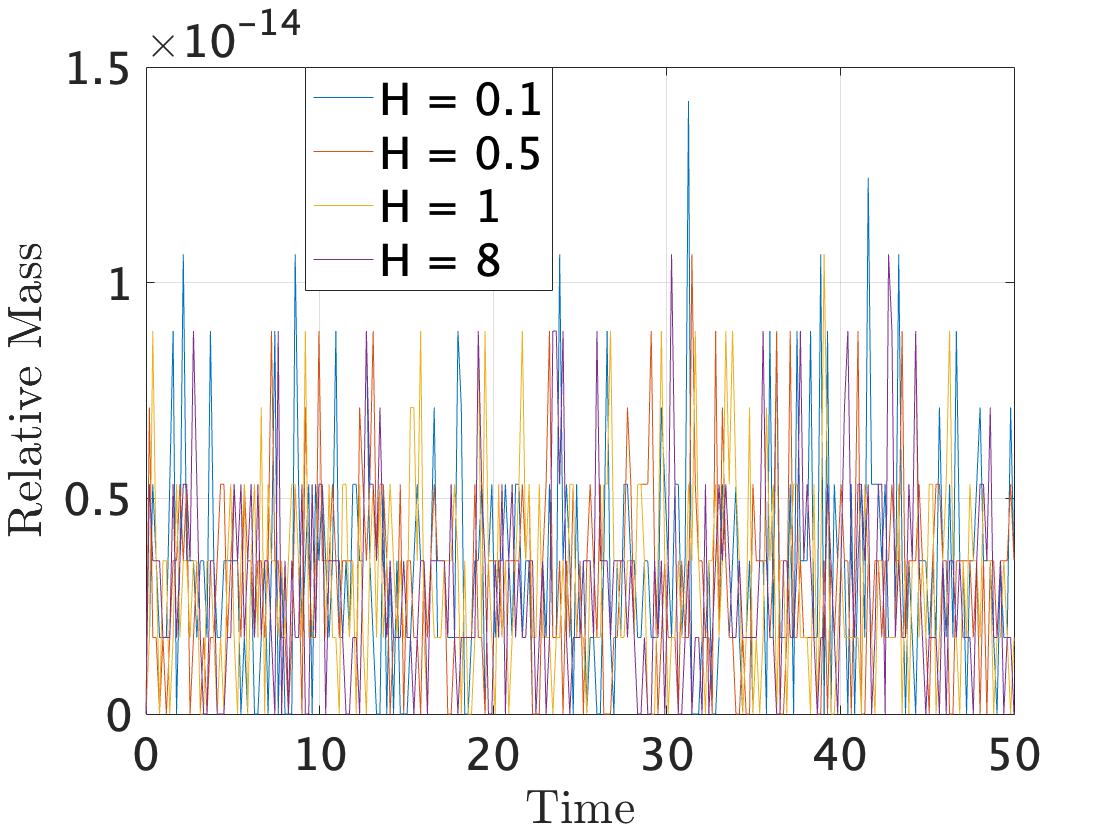}
\caption{Relative mass errors for all $H$.}
\label{fig:TSI_mass_H_all}
\end{subfigure}
\begin{subfigure}{0.49\textwidth}
\centering
\includegraphics[width=\linewidth]{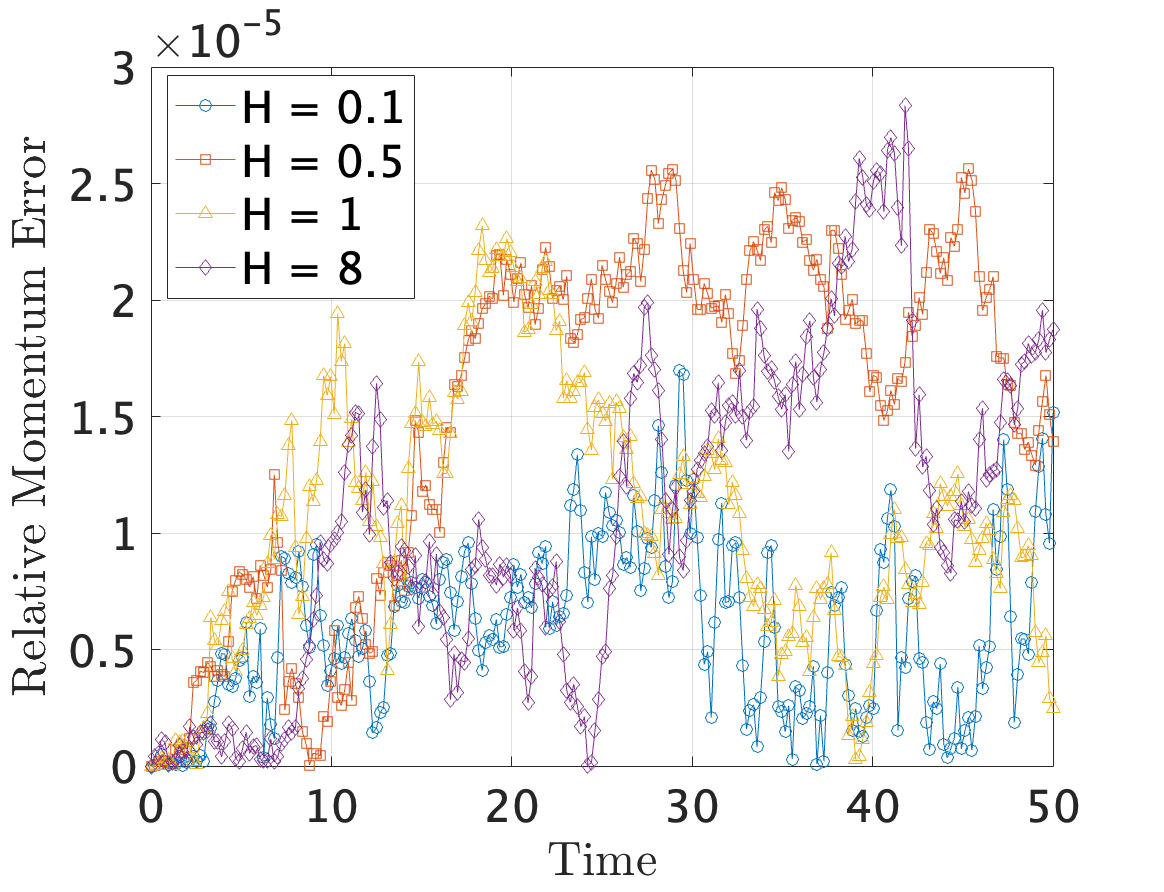}
\caption{Relative momentum errors for all $H$.}
\label{fig:TSI_momentum_H_all}
\end{subfigure}
\caption{Conservation law analysis (two-stream instability): This figure illustrates the evolution of relative mass and momentum over time (up to $T = 50$), using the adaptive-rank Wigner–Poisson solver. Results are shown for different values of the quantum parameter $H$ with a fixed spatial resolution of $512 \times 512$. Mass is conserved up to machine precision in all cases, while momentum error remains controlled within $10^{-4}$.}
\label{fig:TSI_mass_momentum_H}
\end{figure}



We also analyze the computational complexity of our solver. With fixed $\Delta t = 0.1$, final time $T = 50$, and number of grid points $N_x = N_v$, we observe that our adaptive-rank Wigner–Poisson solver exhibits approximately linear growth with respect to the number of grid points. See Figure \ref{fig:TSIrunningtime} for details. This is promising, as we hope to develop an adaptive rank solver for the 6D non-local Wigner–Poisson system, and these results suggest this approach is effective. Here, we present results for $H = 0.5, 1, 8$. For $H = 0.1$, as discussed in Section \ref{subsec:rank analysis}, the system is close to the Vlasov–Poisson regime and the solution's rank approaches full rank. Thus, a much finer grid is needed before the scaling becomes $\mathcal{O}(N)$. {For comparison, we also include the running time of the full-rank solver in the same plot, up to $N_x = 512$ due to its high computational cost. The observed quadratic growth agrees with the expected scaling of $\mathcal{O}(N^2)$.}

\begin{figure}[htbp]
    \centering
    
\begin{subfigure}{0.49\textwidth}
        \centering
        \includegraphics[width=\linewidth]{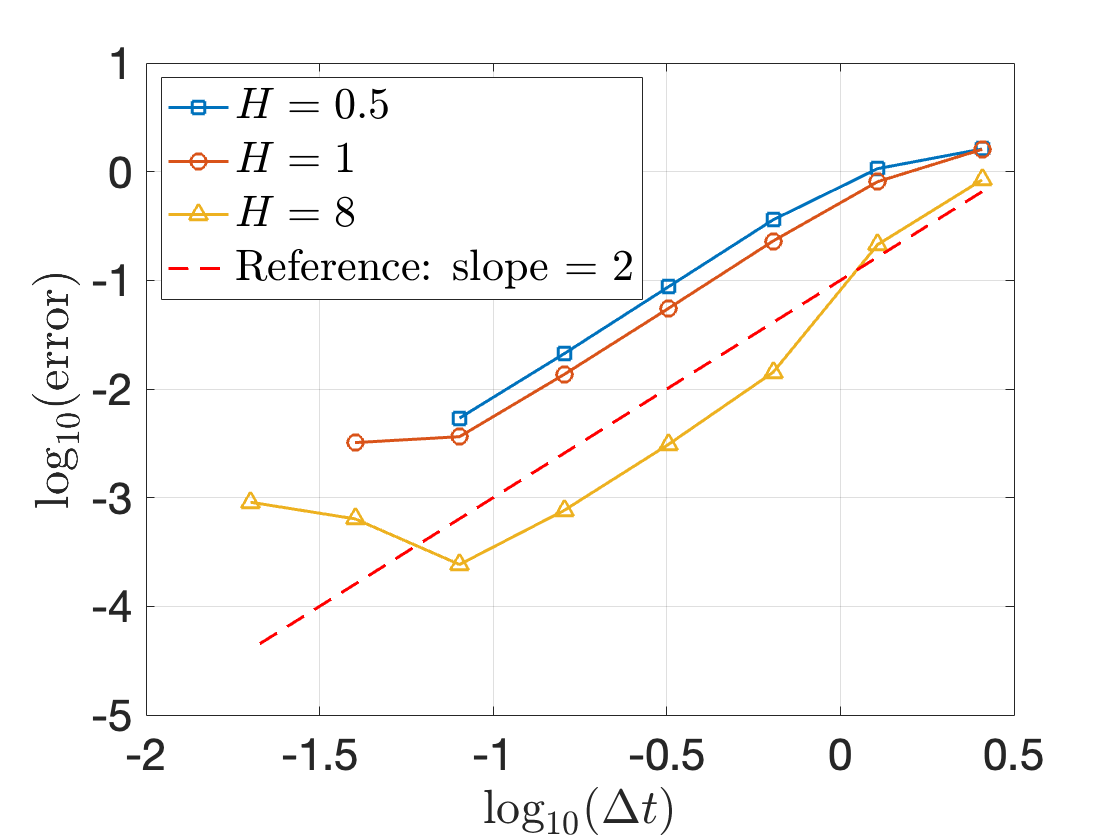}
        \caption{Convergence analysis}
        \label{fig:TSI_convergence}
    \end{subfigure}
    \hfill
    \begin{subfigure}{0.49\textwidth}
        \centering
        \includegraphics[width=\linewidth]{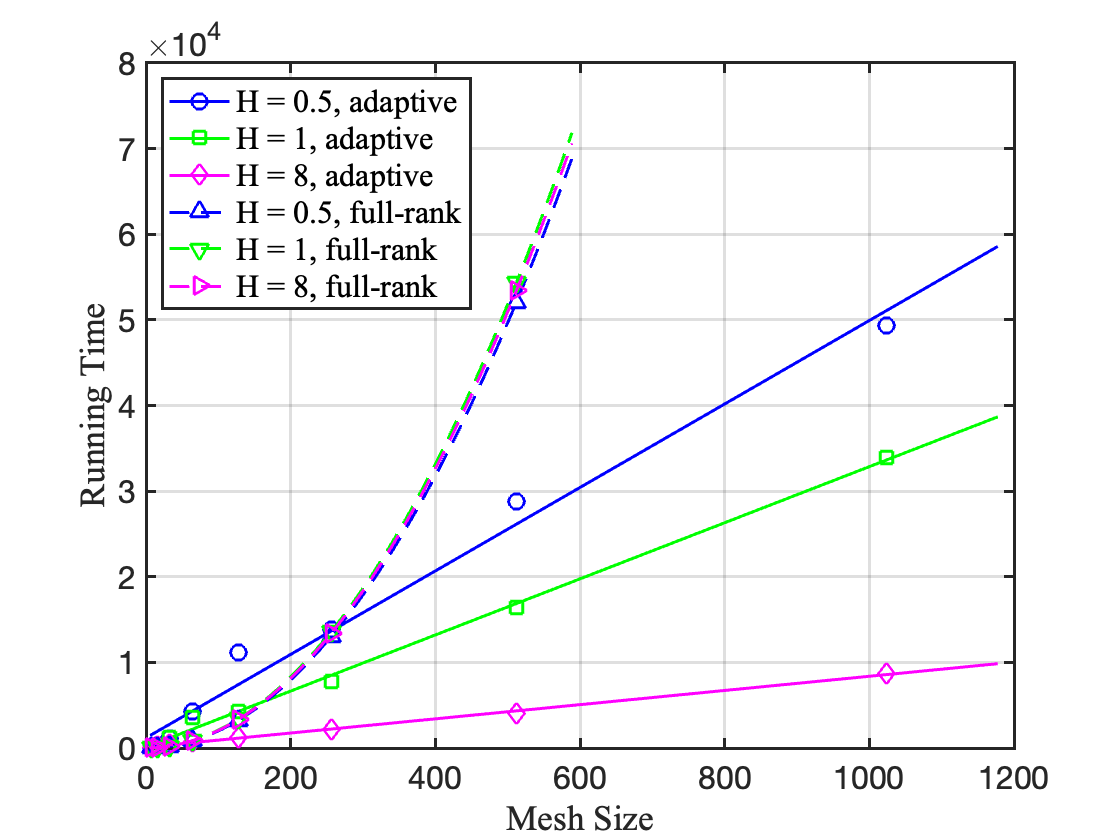}
        \caption{Computational cost analysis}
        \label{fig:TSIrunningtime}
    \end{subfigure}
    \caption{
        Two numerical studies for two-stream instability are conducted for quantum parameters \( H = 0.5, 1, 8 \) up to final time \( T = 50 \). 
        \textbf{(a)} Convergence analysis: Log-log plot of the error versus time step size \( \Delta t \) with a mesh grid of $512 \times 512$. A reference line of slope 2 is shown, confirming second-order accuracy. 
        \textbf{(b)} Computational cost analysis: Running time as a function of the number of grid points per dimension with fixed time step size \( \Delta t = 0.1 \). 
    }
    \label{fig:TSI_study}
\end{figure}

\subsection{Strong Landau damping}
Next we present our results for the strong Landau damping. Landau damping is the collisionless damping of waves in a plasma described by the Vlasov equation. Due to the wave-particle interaction, the electric field interacting with electrons, energy is transferred from the lowest Fourier mode of the electric field to the particles, and subsequently from the particles to higher Fourier modes of the electric field, resulting in damping of the lowest mode over time. Landau damping is a linearized result and only applies over short timescales. When the perturbation is strong enough, the electrons eventually become trapped and form a structure which sustains energy in the low modes of the electric field. Hence, strong Landau damping behaves differently due to electron trapping. However, when quantum effects such as tunneling are present, and if tunneling is strong enough, electrons are no longer trapped and the field will still damp as in the case of weak Landau damping.

Here, we run the strong Landau damping problem on the domain $x \in [0, 5\pi]$, $v \in [-2\pi, 2\pi]$, with the initial condition:
\begin{equation}\label{eqn:ini_StrongLD}
f_0(x,v) = \frac{e^{-v^2/2}}{\sqrt{2\pi}}(1+0.2\cos(0.4x)).
\end{equation}

\begin{figure}[htbp]
\centering
\begin{subfigure}{0.49\textwidth}
\centering
\includegraphics[width=\linewidth]{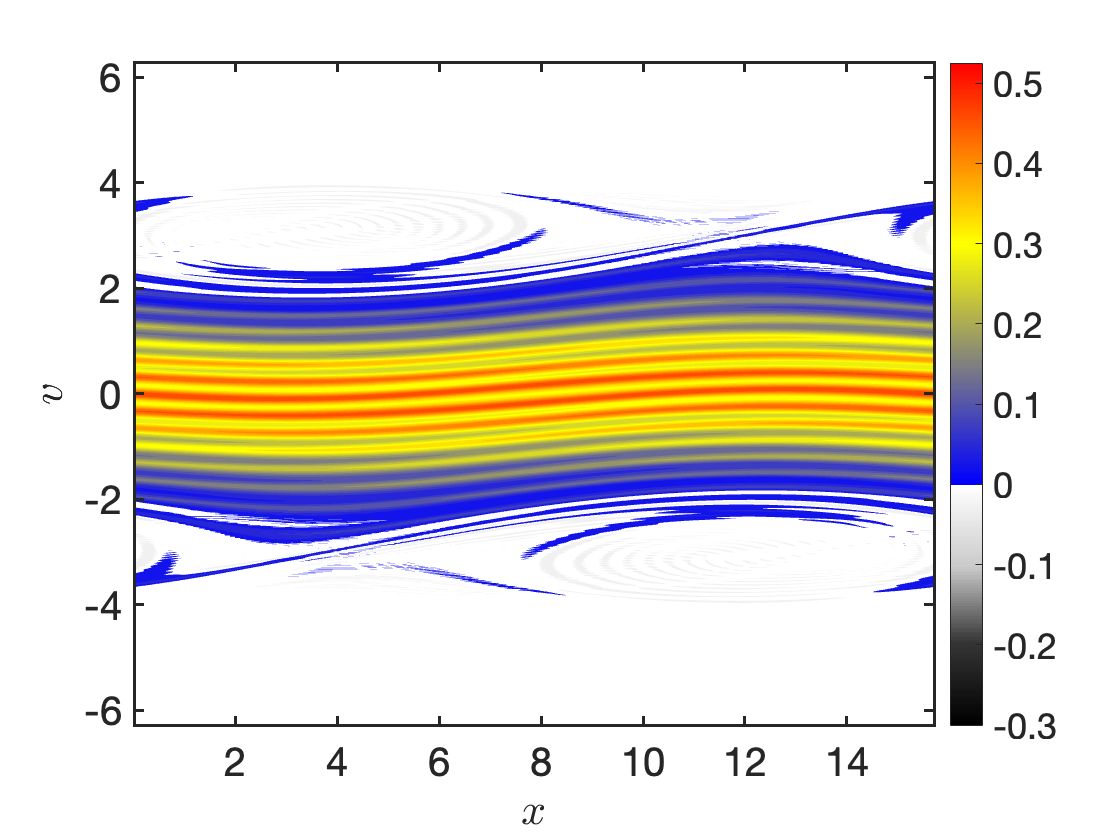}
\caption{$H = 0.1$}
\label{fig:SLD_fullrank_H01_T45}
\end{subfigure}
\begin{subfigure}{0.49\textwidth}
\centering
\includegraphics[width=\linewidth]{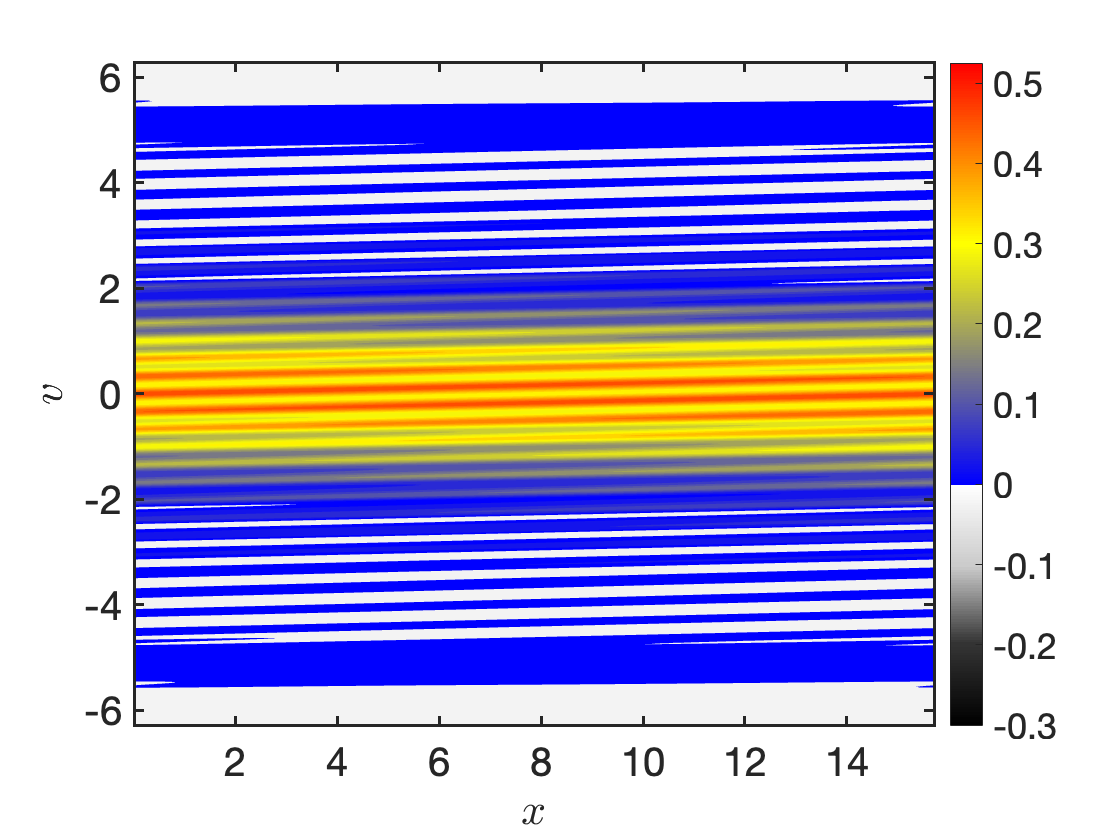}
\caption{$H = 8$}
\label{fig:SLD_fullrank_H8_T45}
\end{subfigure}    
\caption{Phase space solutions of Wigner–Poisson (strong Landau damping): This figure shows the phase space solutions of the full-rank simulations for $H = 0.1, 8$ at $T = 45$ with a resolution of $1024 \times 1024$. These results serve as a benchmark for comparison with {low rank} simulations.}
\label{fig:SLD_fullrank_H_T45}
\end{figure}

In Figure \ref{fig:SLD_fullrank_H_T45}, we present the full-rank solutions of this example, and in Figure \ref{fig:SLD_lowrank_H_T45}, we show the low rank solutions obtained by our solver for {the representative} quantum parameter cases $H = 0.1, 8$, with $N_x = N_v = 512$, CFL = $50$, and $T = 45$. As seen in these figures, the low rank Landau damping solutions are visually indistinguishable from the full-rank solutions. As shown in Figure \ref{fig:SLD_mass_momentum_H}, {for $H = 0.1,0.5,1,8$}, mass is conserved up to machine precision, and momentum is conserved within $10^{-4}$. Again, we note that this is capped at the tolerance of the SVD truncation over the 50 plasma periods we simulate. The integration of the imaginary part is also zero up to machine precision (see Figure \ref{fig:SLDimaginary}), demonstrating that our algorithm preserves key structures in the solution of the Wigner–Poisson system.
\begin{figure}[htbp]
    \centering
    \begin{subfigure}{0.49\textwidth}
        \centering
        \includegraphics[width=\linewidth]{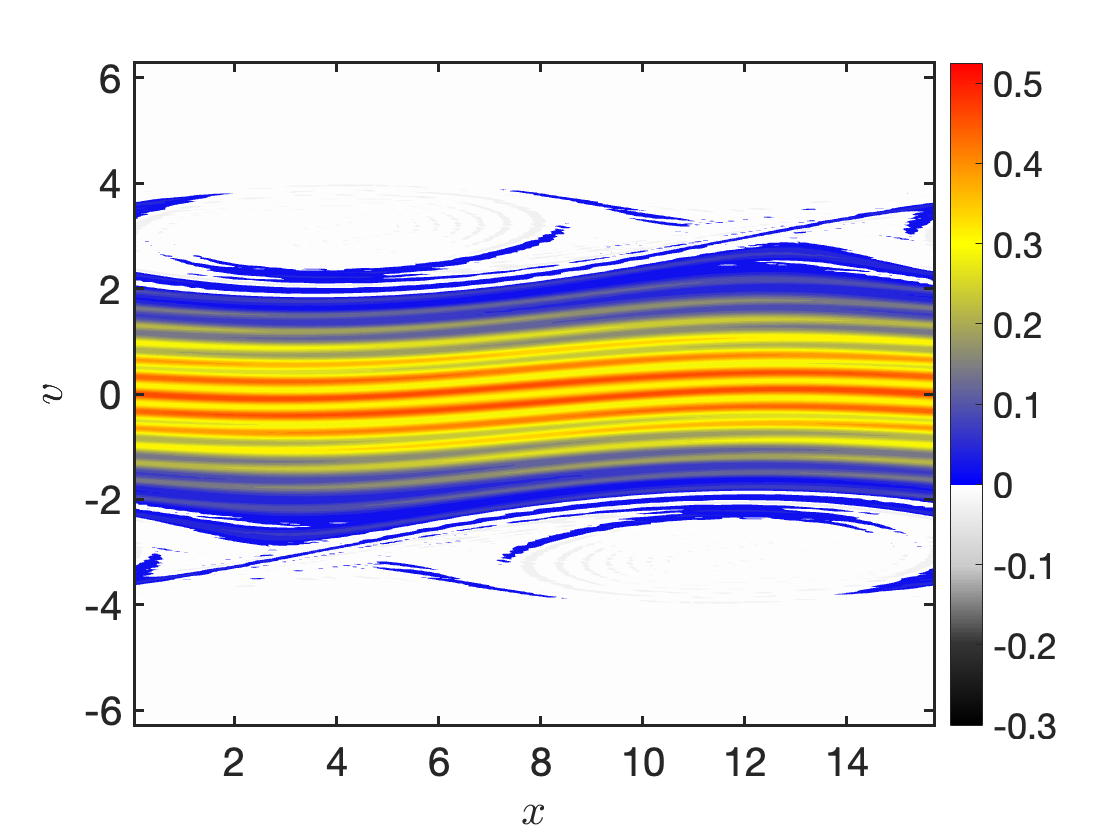}
        \caption{$H = 0.1$}
        \label{fig:SLD_lowrank_H01_T45}
    \end{subfigure}
    \centering
    \begin{subfigure}{0.49\textwidth}
        \centering
        \includegraphics[width=\linewidth]{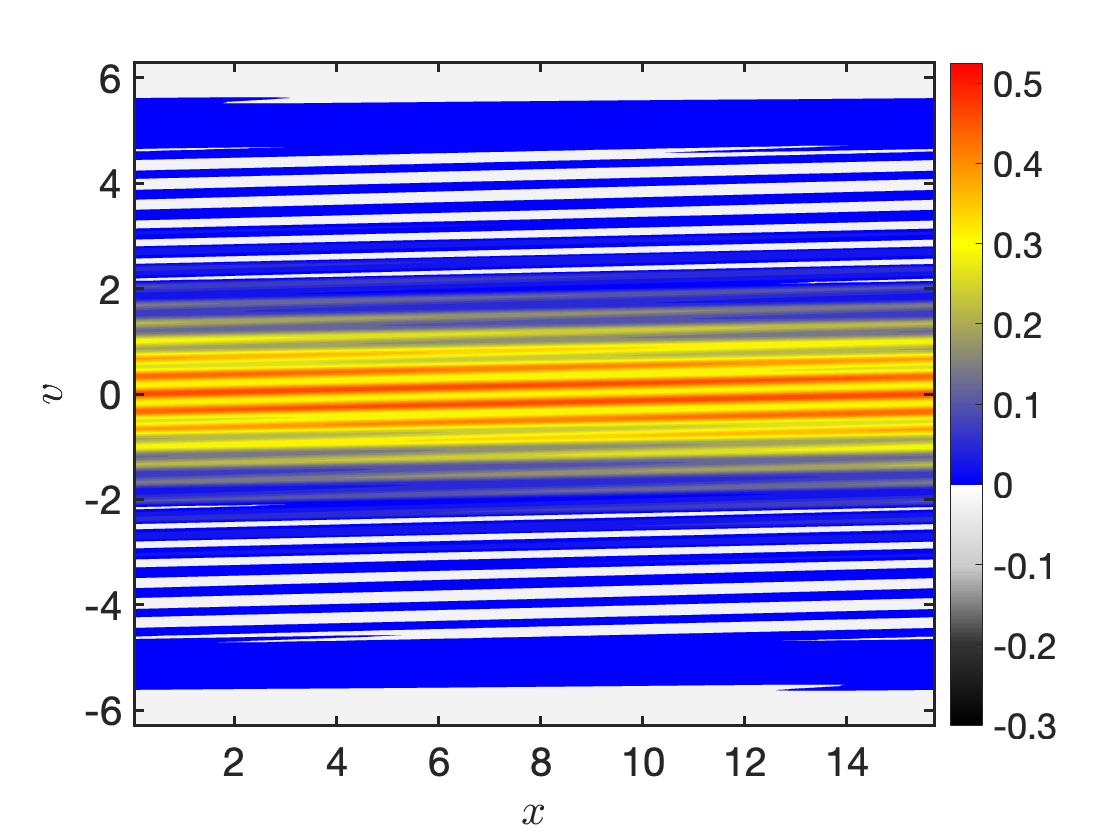}
        \caption{$H = 8$}
        \label{fig:SLD_lowrank_H8_T45}
    \end{subfigure}    
    \caption{Phase space solutions of Wigner Poisson (strong Landau damping): This figure shows the phase space solutions of the {low rank} simulations for $H = 0.1, 8$ at $T = 45$ at a resolution of $512 \times 512$. Compared with full rank results, the adaptive-rank solver is able to accurately approximate the solution.}
    \label{fig:SLD_lowrank_H_T45}
\end{figure}

\begin{figure}[htbp]
  \centering
  \begin{subfigure}{0.49\textwidth}
        \centering
        \includegraphics[width=\textwidth]{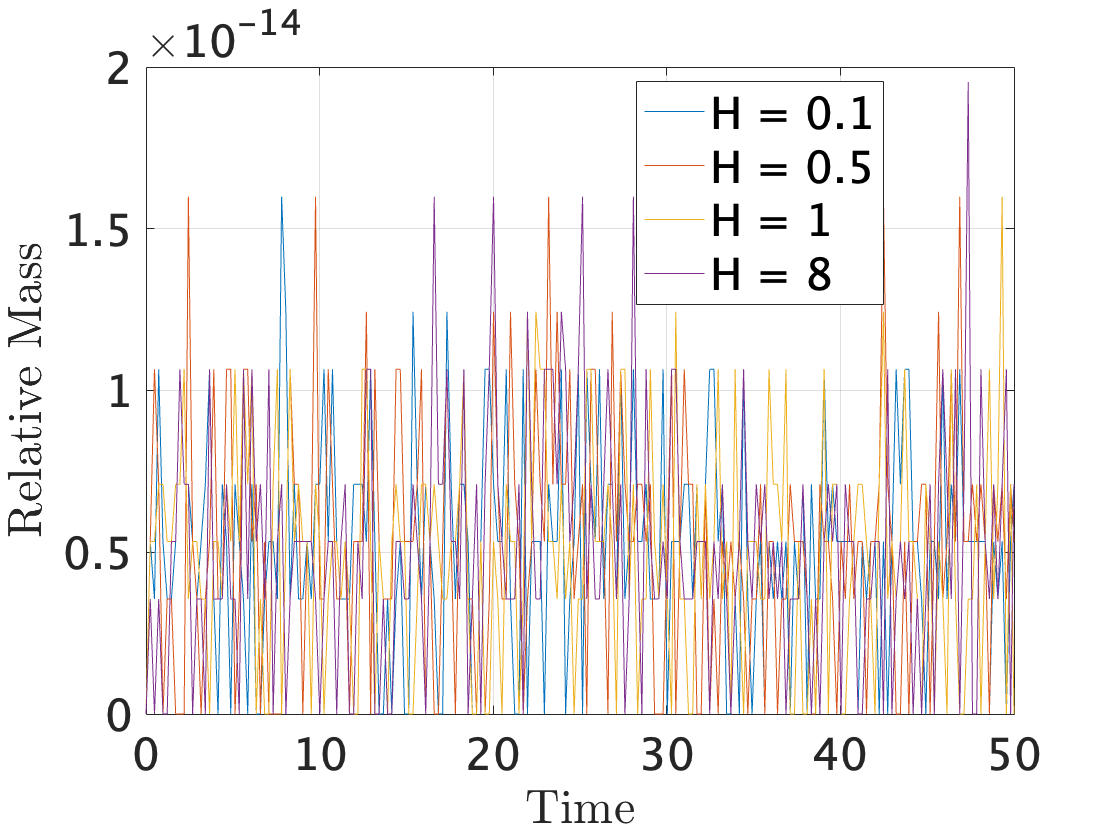}
        \caption{The relative mass errors for all $H$.}
        \label{fig:SLD_mass_H_all}
    \end{subfigure}
    \begin{subfigure}{0.49\textwidth}
        \centering
        \includegraphics[width=\linewidth]{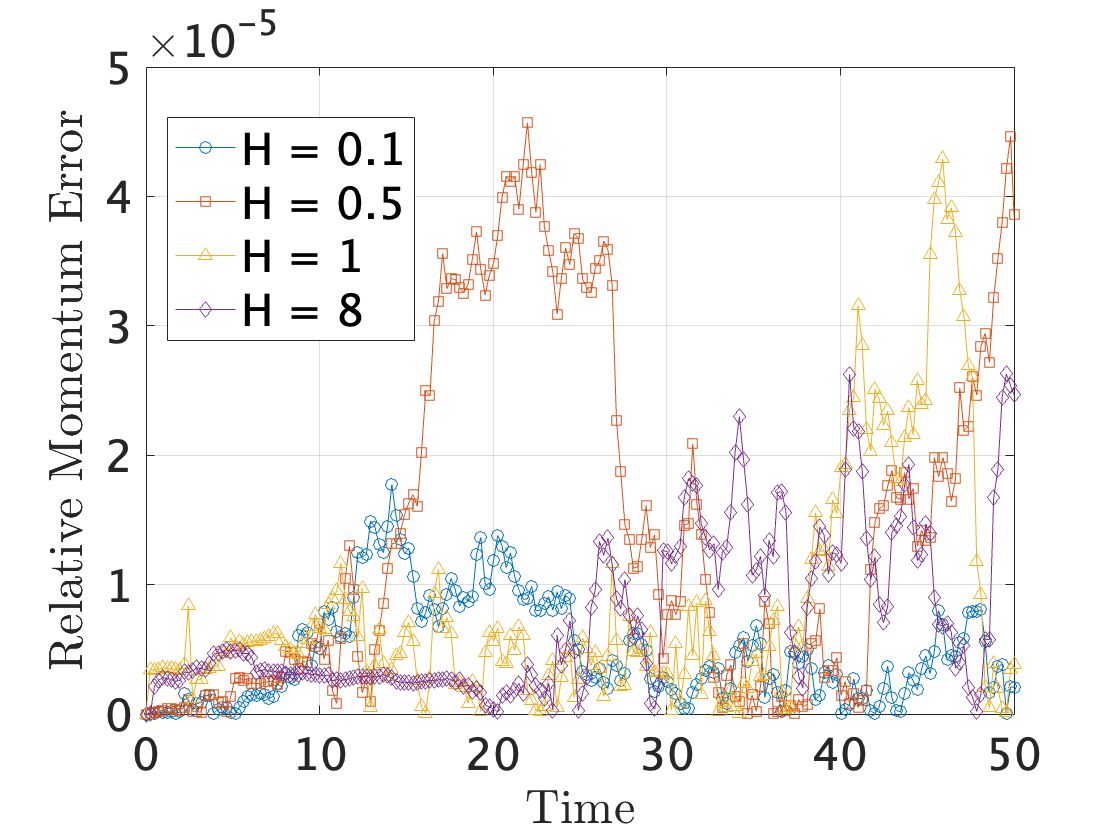}
         \caption{The relative momentum errors for all $H$.}
        \label{fig:SLD_momentum_H_all}
    \end{subfigure}
  \caption{Conservation law analysis (strong Landau damping): This figure illustrates the evolution of relative mass and momentum errors over time (up to $T = 50$), using the adaptive-rank Wigner–Poisson solver. Results are shown for different values of quantum parameter {$H = 0.1,0.5,1,8$} with a fixed spatial resolution of $512 \times 512$. Mass is conserved up to machine precision, and momentum error remains within $10^{-4}$ across all cases.}
  \label{fig:SLD_mass_momentum_H}
\end{figure}

\begin{figure}[htbp]
\centering
\begin{subfigure}{0.49\textwidth}
    \includegraphics[width=\textwidth]{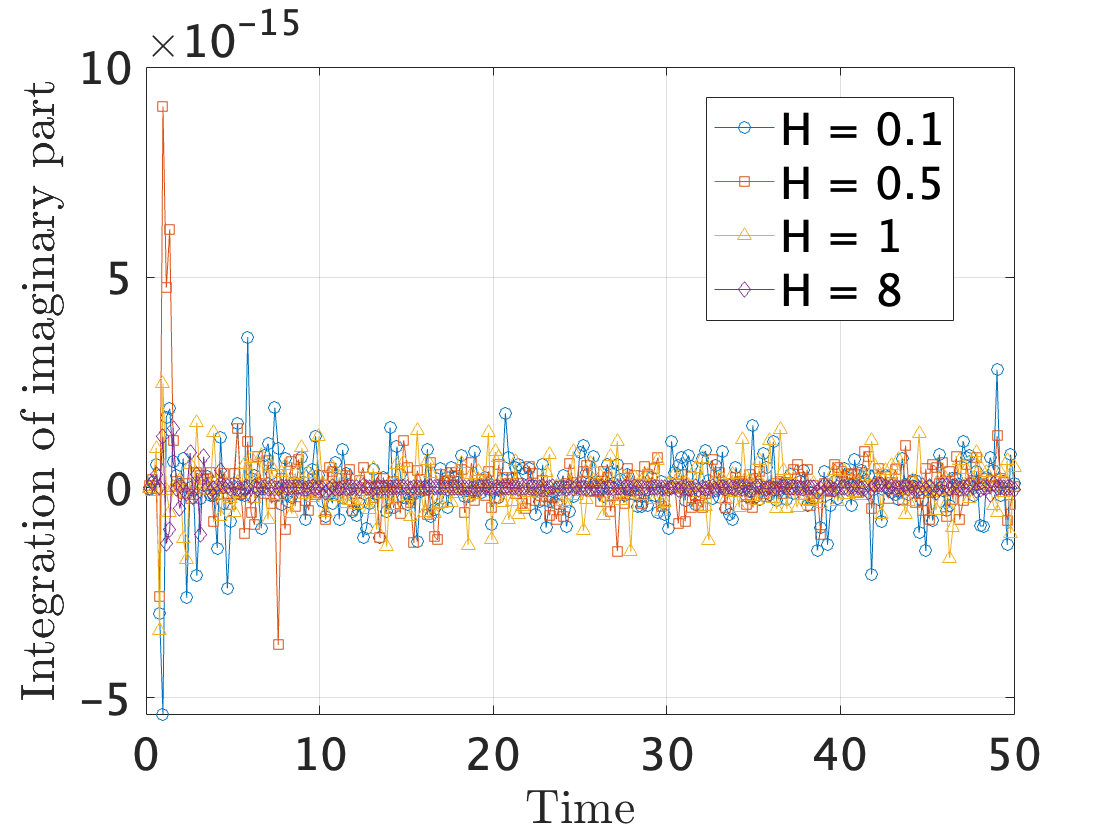}
\caption{Two-stream instability.}
\label{fig:TSIimaginary}
\end{subfigure}
\hfill
\begin{subfigure}{0.49\textwidth}
    \centering
  \includegraphics[width=\textwidth]{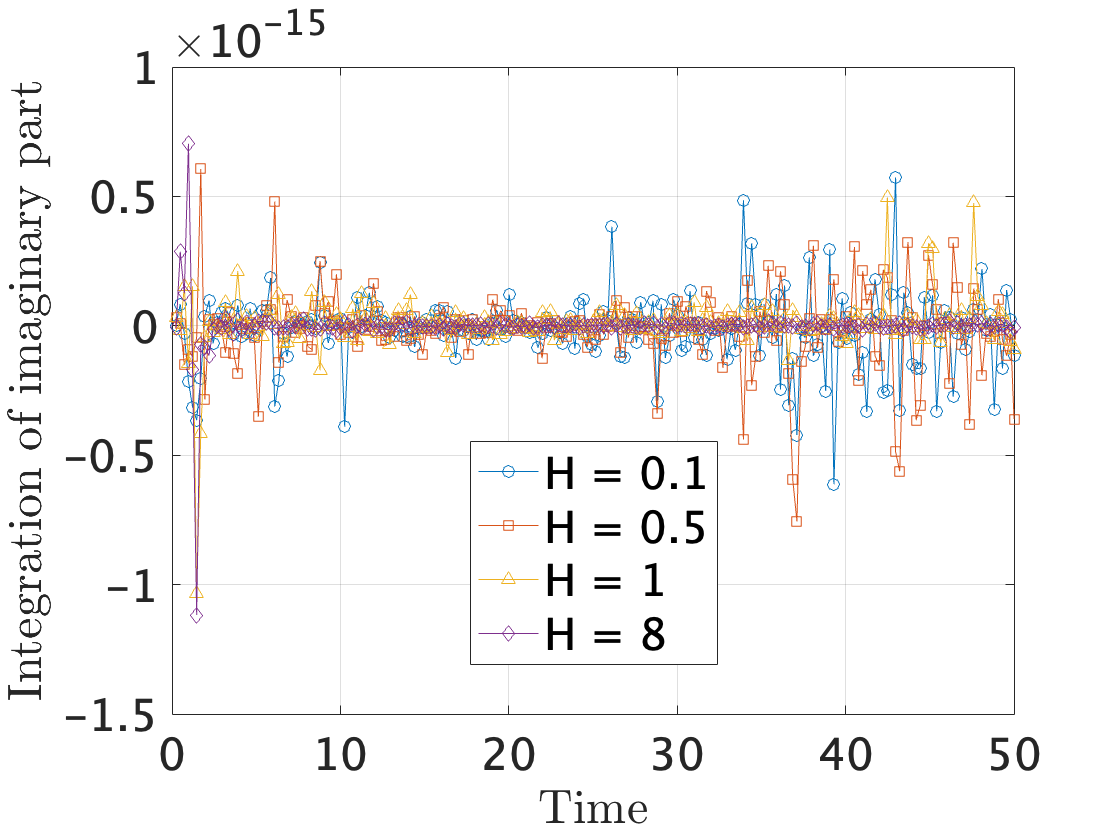}
  \caption{Strong Landau damping.}
  \label{fig:SLDimaginary}
\end{subfigure}
\caption{Structure-preserving analysis: These two figures illustrate the evolution of the integration of imaginary part after Fourier update over time (up to $T = 50$), using the adaptive-rank Wigner–Poisson solver. Results are shown for different values of quantum parameter {$H = 0.1,0.5,1,8$} with a fixed spatial resolution of $512 \times 512$. The integration is zero up to machine precision in all cases.}
\end{figure}

Furthermore, we present plots showing the electrostatic energy for strong Landau damping in the Wigner–Poisson system. In \cite{suh1991numerical}, the predicted damping rate for the Wigner case is $\gamma = 0.1516$. As seen in Figure \ref{fig:low_StrLan_rate_512_H8}, our simulation for $H = 8$ yields a damping rate of $\gamma = 0.1511$ for $N_x = 512$, which is in excellent agreement with the predicted value. 

\begin{figure}[htbp]
  \centering
    \begin{subfigure}{0.49\textwidth}
        \centering
        \includegraphics[width=\linewidth]{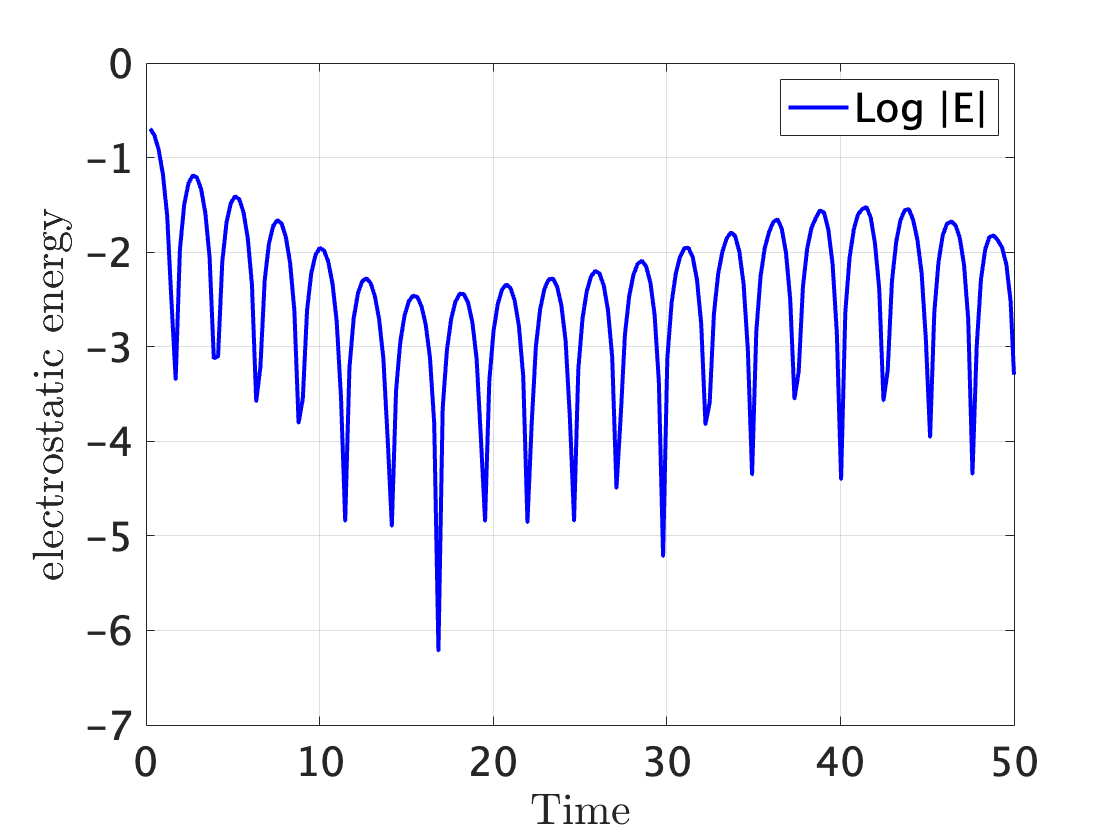}
        \caption{$H = 0.1$}
        \label{fig:low_StrLan_rate_512_H01}
    \end{subfigure}
    \begin{subfigure}{0.49\textwidth}
        \centering
        \includegraphics[width=\linewidth]{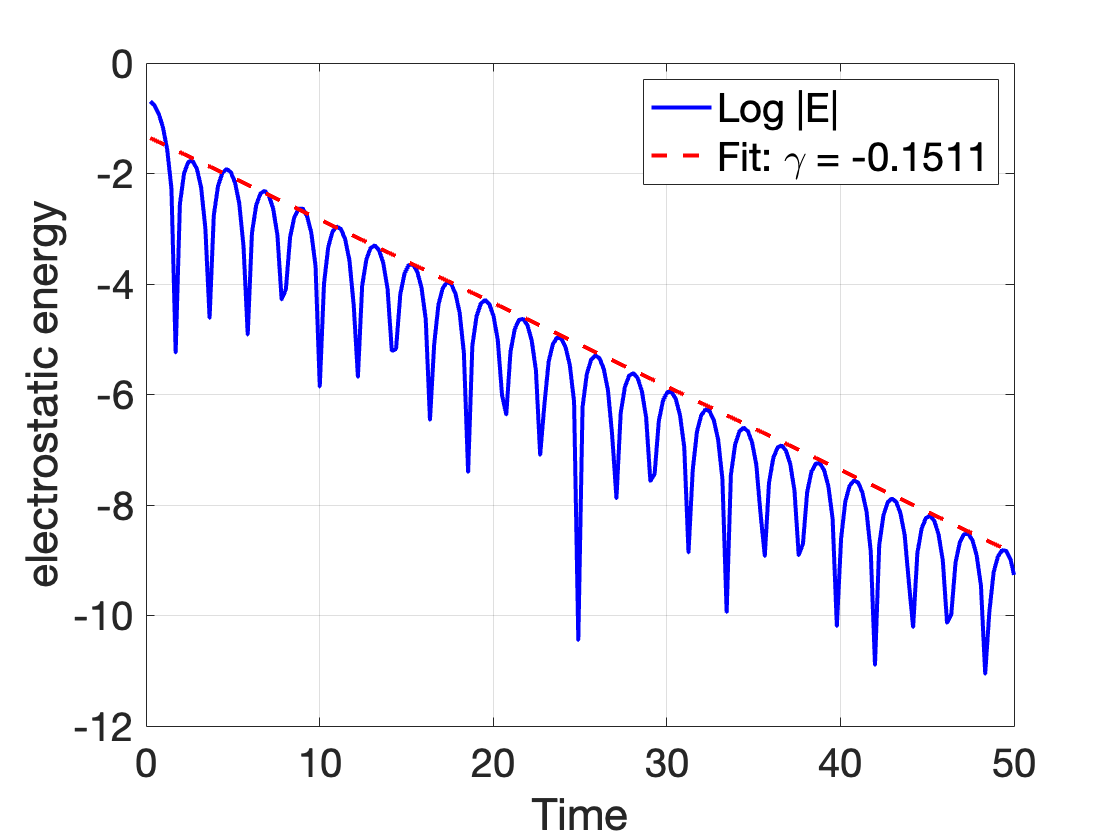}
        \caption{$H = 8$}
        \label{fig:low_StrLan_rate_512_H8}
    \end{subfigure}
    \caption{Evolution of electrostatic energy (strong Landau damping) in the Wigner–Poisson system with $N_x = 512$ and quantum parameters $H = 0.1$ and $H = 8$. The computed damping rates align closely with the theoretical prediction $\gamma = 0.1516$ from \cite{suh1991numerical}.}
  \label{fig:low_StrLan_rate}
\end{figure}

\section{Conclusions}

In this paper, we proposed a mass-conserving adaptive-rank Wigner–Poisson solver with linear computational cost. The method conserves momentum up to the truncation error used in the SVD between ACA steps. As discussed in Section~\ref{subsec:rank analysis}, the key observation motivating this work is that solutions to the Wigner–Poisson system exhibit a low rank structure. Unlike the Vlasov–Poisson system, Wigner–Poisson exhibits finite-size structures, allowing refinement in both space and time over long simulations. These insights were obtained from a {full-rank} method. The results from our proposed conservative adaptive-rank solver are strongly encouraging. They demonstrate that as the non-dimensional Planck’s constant $H$ enters the quantum regime ($H \ge 0.1$), low rank solutions adequately capture the system’s dynamics and enable long-time simulations with $\mathcal{O}(N)$ scaling. This opens the door to the possibility of 6D simulations of this nonlocal integral-differential equation, which is intractable on modern computing platforms using traditional parallel algorithms due to the extremely high communication cost caused by the model’s nonlocality.

In future work, we intend to explore other conservation strategies based on local approximations, extend the algorithm to a {high order} tensor framework for 6D Wigner–Poisson simulations, and develop collision operators to account for Coulomb interactions. Ultimately, we aim to pair this algorithm with simulations of $\alpha$-particles to study the stopping power problem relevant to the National Ignition Facility.

\section*{Acknowledgements}

The authors acknowledge support from AFOSR grants FA9550-24-1-0254 and DOE grant DE-SC0023164. Andrew Christlieb and Sining Gong are also supportted by ONR grant N00014-24-1-2242. Jing-Mei Qiu and Nanyi Zheng are also supported by AFOSR FA9550-22-1-0390. The authors also thank Michael S. Murillo from Computational Mathematics, Science, and Engineering at Michigan State University for valuable conversations regarding the Wigner–Poisson system. The authors acknowledge the help with grammar from the ChatGPT. 

\appendix

\section{Connection Between the Wigner-Poisson and Vlasov-Poisson Systems}\label{appendixA}
In this appendix, we present a careful derivation that demonstrates how the Wigner-Poisson system reduces to the Vlasov-Poisson system in the classical limit as the non-dimensional Planck constant $H \to 0$. While it is well understood for many years that this correspondence holds, inconsistencies and sign errors occasionally appear in the literature due to mistakes in this derivation. We therefore provide a short, self-contained review to clarify this connection.

Let the right-hand side of Equation~\ref{eq:wigner} be denoted as $W(H)$:
\[
W(H) := -\frac{i}{2 \pi H^2} \iint dv' \, dx' \, \exp\left( i \frac{v' - v}{H} x' \right) \left[ \Phi\left(x + \frac{x'}{2}\right) - \Phi\left(x - \frac{x'}{2}\right) \right] f(x,v', t).
\]
To simplify the expression, we perform a change of variables by letting $\tilde{x} = \frac{x'}{H}$, which yields:
\[
W(H) = -\frac{i}{2 \pi H} \iint dv' \, d\tilde{x} \, \exp\left( i (v' - v)\tilde{x} \right) \left[ \Phi\left(x + \frac{H\tilde{x}}{2}\right) - \Phi\left(x - \frac{H\tilde{x}}{2}\right) \right] f(x,v', t).
\]
Taking the limit as $H \to 0$, we obtain:
\[
\lim_{H \to 0} W(H) = \lim_{H \to 0} -\frac{i}{2 \pi} \iint dv' \, d\tilde{x} \, \exp\left( i (v' - v)\tilde{x} \right) \left[ \frac{\Phi\left(x + \frac{H\tilde{x}}{2}\right) - \Phi\left(x - \frac{H\tilde{x}}{2}\right)}{H} \right] f(x,v', t).
\]
We now focus on the inner expression. Note that:
\[
\lim_{H \to 0} \frac{\Phi\left(x + \frac{H \tilde{x}}{2}\right) - \Phi\left(x - \frac{H \tilde{x}}{2}\right)}{H}
= \tilde{x} \cdot \lim_{H \to 0} \frac{\Phi\left(x + \frac{H \tilde{x}}{2}\right) - \Phi\left(x - \frac{H \tilde{x}}{2}\right)}{H \tilde{x}} = \tilde{x} \, \partial_x \Phi(x).
\]
Substituting this into the earlier expression gives:
\[
\lim_{H \to 0} W(H) = -\frac{i\partial_x \Phi(x)}{2\pi} \iint dv' \, d\tilde{x} \, \tilde{x} \, \exp(i(v' - v)\tilde{x}) f(x, v', t).
\]
We now use the identity:
\[
\frac{\partial}{\partial v'} \left( \exp(i(v' - v)\tilde{x}) \right) = i \tilde{x} \exp(i(v' - v)\tilde{x})
\]
Thus, we can rewrite the integral and apply integration by parts:
\begin{align*}
    \lim_{H \to 0} W(H) &= -\frac{\partial_x \Phi(x)}{2\pi} \iint dv' \, d\tilde{x} \frac{\partial}{\partial v'} \left( \exp(i(v' - v)\tilde{x}) \right) f(x, v', t) \\
    &= \frac{\partial_x \Phi(x)}{2\pi} \iint dv' \, d\tilde{x} \, \exp(i(v' - v)\tilde{x}) \cdot \partial_{v'} f(x, v', t).
\end{align*}
Using the identity:
\[
\int d\tilde{x} \, \exp(i(v' - v)\tilde{x}) = 2\pi \delta(v' - v),
\]
we obtain that
\begin{align*}
    \lim_{H \to 0} W(H) = \frac{\partial_x \Phi}{2\pi} \int d{v'} \,  \partial_{v'} f(x, v', t) \cdot 2\pi \delta(v' - v) = \partial_x \Phi \, \partial_v f(x, v, t).
\end{align*}
This confirms that in the classical limit $H \to 0$, the Wigner-Poisson system reduces to the Vlasov-Poisson system:
\begin{align*}
    \frac{\partial f}{\partial t} + v \frac{\partial f}{\partial x} - \partial_x \Phi \cdot \partial_v f &= 0, \\
    -\nabla^2 \Phi &= \int f \, dv - 1.
\end{align*}

\bibliographystyle{siam} 
\bibliography{ref}

\end{document}